\documentclass[11pt]{amsart}%
\usepackage[foot]{amsaddr}

\usepackage[a4paper, centering, vcentering]{geometry}
\usepackage{fullpage}

\usepackage{amsmath,amsfonts,amssymb,amsthm,mathrsfs}

\usepackage{hyperref,url}

\usepackage[centertableaux,smalltableaux]{ytableau}

\usepackage{mathtools} 
\mathtoolsset{showonlyrefs,showmanualtags} 

\usepackage{tikz}

\usepackage{xcolor}

\usepackage{xcolor}
\hypersetup{
    colorlinks,
    linkcolor={red!50!black},
    citecolor={blue!50!black},
    urlcolor={blue!80!black}
}

\newcommand{\blu}{\color{blue}}

\newtheorem{theorem}{Theorem}
\newtheorem{corollary}[theorem]{Corollary}
\newtheorem{proposition}[theorem]{Proposition}

\newtheorem{lemma}[theorem]{Lemma}
\newtheorem*{lemma*}{Lemma}
\theoremstyle{definition} 
\newtheorem{definition}[theorem]{Definition}
\newtheorem{example}{Example}
\theoremstyle{remark}
\newtheorem{remark}{Remark}

\newcommand{\bt}{\begin{theorem}}
\newcommand{\et}{\end{theorem}}
\newcommand{\bl}{\begin{lemma}}
\newcommand{\el}{\end{lemma}}
\newcommand{\bp}{\begin{proposition}}
\newcommand{\ep}{\end{proposition}}
\newcommand{\bc}{\begin{corollary}}
\newcommand{\ec}{\end{corollary}}
\newcommand{\bdeff}{\begin{definition}}
\newcommand{\edeff}{\end{definition}}
\newcommand{\brem}{\begin{remark}}
\newcommand{\erem}{\end{remark}}
\newcommand{\bex}{\begin{example}}
\newcommand{\eex}{\end{example}}
\newcommand{\bcen}{\begin{center}}
\newcommand{\ecen}{\end{center}}

\newcommand{\bi}{\begin{itemize}}

\newcommand{\ei}{\end{itemize}}
\newcommand{\bd}{\begin{description}}
\newcommand{\ed}{\end{description}}

\newcommand{\bqn}{\begin{eqnarray}}
\newcommand{\eqn}{\end{eqnarray}}

\newcommand{\la}{\langle}
\newcommand{\ra}{\rangle}

\newcommand{\g}{\gamma}

\newcommand{\eps}{\varepsilon}			
\newcommand{\lam}{\lambda}


\newcommand{\wt}[1]{\widetilde{#1}}
\newcommand{\mc}[1]{\mathcal{#1}}
\newcommand{\VecM}{\mathrm{Vec}(M)}				
\newcommand{\VecH}{\mathrm{Vec}_\distr(M)}		
\newcommand{\VecTM}{\mathrm{Vec}(T^*M)}			
\newcommand{\distr}{\mathscr{D}}				
\newcommand{\ve}{\mathcal{V}}					
\newcommand{\R}{\mathbb{R}}						
\newcommand{\lev}{\alpha}						
\newcommand{\RR}{\mathfrak{R}}					
\newcommand{\DD}{\mathscr{F}}					
\newcommand{\tanf}{\mathsf{T}}					
\newcommand{\y}{D}								
\renewcommand{\k}{k}							
\newcommand{\EXP}{\mc{E}}						
\newcommand{\barE}{\bar{E}}
\newcommand{\metr}{\la\cdot|\cdot\ra}			

\DeclareMathOperator{\Ric}{\mathfrak{Ric}}		
\DeclareMathOperator{\Sec}{\mathrm{Sec}}		
\DeclareMathOperator{\spn}{\mathrm{span}}		
\DeclareMathOperator{\trace}{\mathrm{tr}}		
\DeclareMathOperator{\diag}{\mathrm{diag}}		
\DeclareMathOperator{\rank}{\mathrm{rank}}		
\DeclareMathOperator{\diam}{\mathrm{diam}}		

\begin{document}

\title{Comparison theorems for conjugate points in sub-Riemannian geometry}

\author{D. Barilari$^1$}
\address{$^1$Universit\'e Paris Diderot - Paris 7, Institut de Mathematique de Jussieu, UMR CNRS 7586 - UFR de Math\'ematiques} \email{\href{mailto:davide.barilari@imj-prg.fr}{\nolinkurl{davide.barilari@imj-prg.fr}}}

\author{L. Rizzi$^2$}
\address{$^2$CNRS, CMAP \'Ecole Polytechnique and \'Equipe INRIA GECO Saclay \^Ile-de-France, Paris, France} \email{\href{mailto:luca.rizzi@cmap.polytechnique.fr}{\nolinkurl{luca.rizzi@cmap.polytechnique.fr}}}

\date{\today}

\subjclass[2010]{53C17, 53C21, 53C22, 49N10}

\keywords{sub-Riemannian geometry, Curvature, comparison theorems, conjugate points}

\begin{abstract}
We prove sectional and Ricci-type comparison theorems for the existence of conjugate points along sub-Riemannian geodesics. In order to do that, we regard sub-Riemannian structures as a special kind of variational problems. In this setting, we identify a class of models, namely linear quadratic optimal control systems, that play the role of the constant curvature spaces. As an application, we prove a version of sub-Riemannian Bonnet-Myers theorem and we obtain some new results on conjugate points for three dimensional left-invariant sub-Riemannian structures.
\end{abstract}

\maketitle

\setcounter{tocdepth}{2}
\tableofcontents

\section{Introduction}\label{s:intro}

Among the most celebrated results in Riemannian geometry, comparison theorems play a prominent role. These theorems allow to estimate properties of a manifold under investigation with the same property on the \emph{model spaces} which, in the classical setting, are the simply connected manifolds with constant sectional curvature (the sphere, the Euclidean plane and the hyperbolic plane). The properties that may be investigated with these techniques are countless and include, among the others, the number of conjugate points along a given geodesic, the topology of loop spaces, the behaviour of volume of sets under homotheties, 
Laplacian comparison theorems, estimates for solutions of PDEs on the manifold, etc.

In this paper we are concerned, in particular, with results of the following type. Until further notice, $M$ is a Riemannian manifold, endowed with the Levi-Civita connection, $\Sec(v,w)$ is the sectional curvature of the plane generated by $v,w \in T_x M$.

\begin{theorem}\label{t:model}
Let $\gamma(t)$ be a unit-speed geodesic on $M$. If for all $t\geq 0$ and for all $v \in T_{\gamma(t)} M$ orthogonal to $\dot{\gamma}(t)$ with unit norm $\Sec(\dot{\gamma}(t),v) \geq \k > 0$, then there exists $0<t_c\leq \pi/\sqrt{\k}$ such that $\gamma(t_c)$ is conjugate with $\gamma(0)$.
\end{theorem}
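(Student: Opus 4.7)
My plan is the standard Jacobi/Sturm comparison argument, set up so that it transparently generalizes to the sub-Riemannian variational framework of the paper. First I would recall that $\gamma(t_c)$ is conjugate to $\gamma(0)$ exactly when there exists a nontrivial Jacobi field $J(t)$ along $\gamma$ with $J(0)=J(t_c)=0$, and that one may restrict to Jacobi fields normal to $\dot\gamma$, since tangential Jacobi fields never vanish twice. So the goal is to exhibit a forced zero of some normal Jacobi field before time $\pi/\sqrt{\k}$.

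The cleanest route goes through a scalar Sturm comparison. Pick a parallel orthonormal frame $E_1(t),\dots,E_{n-1}(t)$ of $\dot\gamma(t)^\perp$ along $\gamma$, write a normal Jacobi field as $J(t)=\sum_i u_i(t)E_i(t)$, and use that $\nabla_{\dot\gamma}E_i=0$ to reduce the Jacobi equation to
\[
\ddot u(t) + R(t)u(t)=0, \qquad R(t)_{ij}=\la R(\dot\gamma,E_i)\dot\gamma,E_j\ra.
\]
Then I would consider the scalar quantity $f(t)=|J(t)|$ for any normal Jacobi field with $J(0)=0$; a direct computation using the Cauchy--Schwarz inequality gives the Riccati-type bound $\ddot f(t)+ \Sec(\dot\gamma(t),J(t)/|J(t)|)\,f(t)\le 0$, hence $\ddot f + \k f\le 0$ by the sectional curvature hypothesis. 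Comparing with the model solution $g(t)=\sin(\sqrt{\k}\,t)$ of $\ddot g+\k g=0$ via Sturm's theorem forces $f$ to vanish at some $t_c\in(0,\pi/\sqrt{\k}]$, which gives the required conjugate point.

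If I instead wanted to use only the second variation of length (which is the form the authors are likely to generalize), I would argue by contradiction: assume no conjugate point exists in $(0,\pi/\sqrt{\k}]$, so the index form $I_T(V,V)=\int_0^T(|\nabla_{\dot\gamma}V|^2-\la R(\dot\gamma,V)\dot\gamma,V\ra)\,dt$ is positive semidefinite on normal variations vanishing at $0$ and $T=\pi/\sqrt{\k}$. Testing on $V(t)=\sin(\sqrt{\k}\,t)\,E(t)$ with $E$ a parallel unit normal field collapses the integrand to $\k\cos^2(\sqrt{\k}\,t)-\sin^2(\sqrt{\k}\,t)\Sec(\dot\gamma,E)\le \k\cos(2\sqrt{\k}\,t)$, whose integral on $[0,\pi/\sqrt{\k}]$ vanishes, giving $I_T(V,V)\le 0$ and a contradiction with non-triviality unless $V$ is itself a Jacobi field, which already produces a conjugate point at $T$.

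The only real subtlety is handling the locus where $f=|J|$ fails to be smooth (i.e. the zeros of $J$), and ensuring the Cauchy--Schwarz step really yields $\ddot f\le -\k f$ with the sectional, rather than Ricci, bound; this is standard but has to be written carefully to see where the hypothesis on every orthogonal plane, not just the average, is used. Everything else is bookkeeping, and this is exactly the Riccati/index-form framework the paper will recast in terms of its generalized Jacobi curves.
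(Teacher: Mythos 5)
Your primary route (the scalar Sturm comparison via $f(t)=|J(t)|$) breaks down at exactly the step you flagged: the Cauchy--Schwarz computation gives the inequality in the \emph{opposite} direction. Writing the Jacobi equation as $\ddot J+R(J,\dot\gamma)\dot\gamma=0$, one finds, wherever $J\neq 0$,
\begin{equation}
f\,\ddot f=\langle\ddot J,J\rangle+|\dot J|^2-\dot f^2\;\geq\;\langle\ddot J,J\rangle=-\Sec\bigl(\dot\gamma,J/|J|\bigr)\,f^2,
\end{equation}
since $|\dot J|^2\geq\langle\dot J,J\rangle^2/|J|^2$. Hence $\ddot f\geq-\Sec(\dot\gamma,J/|J|)\,f$, not $\leq$. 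This is the inequality one uses for \emph{upper} curvature bounds (Rauch: $\Sec\leq k$ implies no conjugate point before $\pi/\sqrt{k}$); a lower bound on $\Sec$ yields no useful differential inequality for the norm of a single Jacobi field, and the claimed bound $\ddot f\leq-\Sec(\dot\gamma,J/|J|)\,f$ is simply false: on a product such as $S^2\times\R$, a Jacobi field mixing the spherical and the flat directions violates it at explicit times. So no amount of care at the zeros of $J$ rescues this argument; the first route must be discarded, not merely ``written carefully''.

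Your second argument, the index-form computation with $V(t)=\sin(\sqrt{k}\,t)E(t)$, is correct and complete modulo the standard index lemma (positive definiteness of $I_T$ on fields vanishing at the endpoints when there are no conjugate points in $(0,T]$); it is the classical Bonnet argument and it uses the sectional hypothesis in the right way, one parallel direction at a time. It is, however, not the paper's route: the paper never proves Theorem~\ref{t:model} directly, but recovers it as the Riemannian specialization of Theorem~\ref{t:comparison1}, in which the whole $n$-dimensional space of Jacobi fields vanishing at $\gamma(0)$ is encoded in the \emph{matrix} Riccati equation $\dot V+R(t)+V^2=0$ with $\lim_{t\to0^+}V^{-1}=0$, and compared (Theorem~\ref{t:riccatilim}) with the constant-coefficient model $R\equiv k\,\mathbb{I}$, i.e.\ the LQ harmonic oscillator, whose blow-up (conjugate) time is $\pi/\sqrt{k}$. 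The matrix Riccati comparison is precisely the correct replacement for your failed scalar trick: by tracking all Jacobi fields vanishing at the initial point simultaneously, a lower curvature bound does translate into a one-sided bound on $V$ and hence on its blow-up time, and it is this formulation that the paper then generalizes to the sub-Riemannian setting.
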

Notice that the quadratic form $\Sec(\dot{\gamma}(t),\cdot) : T_{\gamma(t)}M \to \R$, which we call \emph{directional curvature} (in the direction of $\dot{\gamma}$), computes the sectional curvature of the planes containing $\dot{\gamma}$. Theorem~\ref{t:model} compares the distance of the first conjugate point along $\gamma$ with the same property computed on the sphere with sectional curvature $\k>0$, provided that the directional curvature along the geodesic on the reference manifold is bounded from below by $\k$. Theorem~\ref{t:model} also contains all the basic ingredients of a comparison-type result:
\begin{itemize}
\item A micro-local condition, i.e. ``along the geodesic'', usually given in terms of curvature-type quantities, such as the sectional or Ricci curvature.
\item Models for comparison, that is spaces in which the property under investigation can be computed explicitly.
\end{itemize}
As it is well known, Theorem~\ref{t:model} can be improved by replacing the bound on the directional curvature with a bound on the average, or Ricci curvature. Moreover, Theorem~\ref{t:model} leads immediately to the celebrated Bonnet-Myers theorem (see \cite{Myers}).

\begin{theorem}\label{t:bm-riem-intro}
Let $M$ be a connected, complete Riemannian manifold, such that, for any unit-speed geodesic $\gamma(t)$, the Ricci curvature $\mathrm{Ric}^\nabla(\dot\gamma(t)) \geq n k$. Then, if $k > 0$, $M$ is compact, has diameter non greater than $\pi/\sqrt{\kappa}$ and its fundamental group is finite.
\end{theorem}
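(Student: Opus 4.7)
The plan is to deduce the diameter bound from the Ricci-type refinement of Theorem~\ref{t:model} (alluded to in the paragraph immediately following it), combined with Hopf-Rinow, and then to reduce the finiteness of $\pi_1(M)$ to the compactness of the universal cover.

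First I would extract conjugate points: under the hypothesis $\mathrm{Ric}^\nabla(\dot\gamma(t)) \geq n k$, the Ricci-type comparison statement guarantees that along any complete unit-speed geodesic $\gamma$ there is a conjugate point $\gamma(t_c)$ with $0 < t_c \leq \pi/\sqrt{k}$. Morally this follows from Theorem~\ref{t:model} by tracing the sectional hypothesis over an orthonormal frame of $\dot\gamma^\perp$ parallel along $\gamma$, and then applying a Jacobi/Riccati comparison to the averaged quantity, exploiting the fact that the existence of a conjugate point for the averaged Riccati equation forces the existence of one for at least one component.

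Next I would control the diameter. It is classical, via the second variation of arc length, that a minimizing geodesic cannot contain an interior pair of conjugate points. Because $M$ is complete, Hopf-Rinow furnishes a minimizing geodesic between any two points $x, y \in M$, and by the first step its length, which equals $d(x, y)$, cannot exceed $\pi/\sqrt{k}$. Hence $\diam(M) \leq \pi/\sqrt{k}$; completeness together with a finite diameter then forces compactness by a second appeal to Hopf-Rinow.

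Finally, for $\pi_1(M)$, I would lift the Riemannian metric to the universal cover $\widetilde{M}$. Since the covering projection is a local isometry, $\widetilde{M}$ is complete and its Ricci curvature along geodesics satisfies the same lower bound $n k$, so the two previous steps apply verbatim and $\widetilde{M}$ is compact. The fibres of $\widetilde{M} \to M$ are in bijection with $\pi_1(M)$, and a compact space cannot cover $M$ with infinitely many sheets, so $\pi_1(M)$ is finite. The only nontrivial ingredient is the Ricci strengthening of Theorem~\ref{t:model} used in the first step; the remainder is a standard packaging of Hopf-Rinow and covering-space theory, and the main obstacle when one attempts to imitate this scheme in the sub-Riemannian setting will precisely be to identify what replaces the Jacobi/Riccati trace argument.
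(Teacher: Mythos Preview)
Your proposal is correct and reproduces the classical argument for Bonnet--Myers. Note, however, that the paper does not give a dedicated proof of Theorem~\ref{t:bm-riem-intro}: it is quoted in the introduction as a classical result (with a reference to Myers), and is only recovered later as the Riemannian specialization of the general sub-Riemannian Bonnet--Myers theorem (Theorem~\ref{t:bonnetmyers}, via Corollary~\ref{c:averriem}).

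Comparing the two routes: you invoke Hopf--Rinow directly to obtain a minimizing geodesic between any two points, and then use the conjugate point bound plus the second-variation argument to cap its length. The paper's proof of Theorem~\ref{t:bonnetmyers}, specialized to the Riemannian case, instead argues by density: it works on the open dense set $\Sigma_{x_0}$ of points reached by a unique strictly normal minimizer without conjugate points, applies the Ricci-type conjugate time estimate there, and then closes up by continuity of the distance. The fundamental group step is identical in both (lift to the universal cover, same curvature bound, compactness again). Your direct use of Hopf--Rinow is cleaner in the Riemannian setting; the paper's density detour through $\Sigma_{x_0}$ is what survives in the sub-Riemannian case, where one cannot assume every pair of points is joined by a normal minimizing geodesic, and this is precisely the ``obstacle'' you anticipate in your last sentence.
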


In Riemannian geometry, the importance of conjugate points rests on the fact that geodesics cease to be minimizing after the first one. This remains true for strongly normal sub-Riemannian geodesics. Moreover, conjugate points, both in Riemannian and sub-Riemannian geometry, are also intertwined with the analytic properties of the underlying structure, for example they affect the behaviour of the heat kernel (see \cite{bbcn,srneel} and references therein). 

The main results of this paper are comparison theorems on the existence of conjugate points, valid for any sub-Riemannian structure. 

We briefly introduce the concept of sub-Riemannian structure. A sub-Riemannian structure on a manifold $M$ can be defined as a distribution $\distr \subseteq TM$ of constant rank, with a scalar product that, unlike the Riemannian case, is defined only for vectors in $\distr$. Under mild assumptions on $\distr$ (the H\"ormander condition) any connected sub-Riemannian manifold is \emph{horizontally} path-connected, namely any two points are joined by a path whose tangent vector belongs to $\distr$. Thus, a rich theory paralleling the classical Riemannian one can be developed, giving a meaning to the concept of geodesic, as an horizontal curve that locally minimises the length. 

Still, since in general there is no canonical completion of the sub-Riemannian metric to a Riemannian one, there is no way to define a connection \`a la Levi-Civita and thus the familiar Riemannian curvature tensor. The classical theory of Jacobi fields and its connection with the curvature plays a central role in the proof of many Riemannian comparison results, and the generalisation to the sub-Riemannian setting is not straightforward. The Jacobi equation itself, being defined in terms of the covariant derivative, cannot be formalised in the classical sense when a connection is not available. 

In this paper we focus on results in the spirit of Theorem~\ref{t:model} even tough there are no evident obstructions to the application of the same techniques, relying on the Riccati equations for sub-Riemannian geodesics, to other comparison results. We anticipate that the comparisons models will be linear quadratic optimal control problems (LQ problems in the following), i.e. minimization problems quite similar to the Riemannian one, where the length is replaced by a functional defined by a quadratic Lagrangian. More precisely we are interested in \emph{admissible trajectories} of a \emph{linear} control system in $\R^n$, namely  curves $x:[0,t]\to \mathbb{R}^n$ for which there exists a control $u \in L^2([0,t],\mathbb{R}^k)$ such that
\begin{equation}
\dot{x} = Ax +Bu, \qquad x(0) = x_0, \qquad x(t) = x_1,\qquad x_0,x_1,t \text{ fixed},
\end{equation}
that minimize a \emph{quadratic} functional $\phi_{t}: L^2([0,t],\mathbb{R}^k) \to \mathbb{R}$ of the form
\begin{equation}
\phi_{t}(u) = \frac{1}{2}\int_{0}^{t} \left(u^* u - x^*Qx \right)dt.
\end{equation}
Here $A,B,Q$ are constant matrices of the appropriate dimension. 
 The symmetric matrix $Q$ is usually referred to as the \emph{potential}. Notice that it makes sense to speak about \emph{conjugate time} of a LQ problem: it is the time $t_c>0$ at which extremal trajectories lose local optimality, as in (sub)-Riemannian geometry. Moreover, $t_c$ does not depend on the data $x_0,x_1$, but it is an intrinsic feature of the problem. These kind of structures are well known in the field of optimal control theory, but to our best knowledge this is the first time they are employed as model spaces for comparison results.

With any ample, equiregular sub-Riemannian geodesic $\gamma(t)$ (see Definition~\ref{d:ampleq}), we associate: its \emph{Young diagram} $\y$, a scalar product $\langle\cdot|\cdot\rangle_{\gamma(t)} : T_{\gamma(t)} M \times T_{\gamma(t)} M\mapsto \R$ extending the sub-Riemannian one and a quadratic form $\RR_{\gamma(t)} : T_{\gamma(t)} M \mapsto \R$ (the sub-Riemannian directional curvature), all depending on the geodesic $\gamma(t)$. We stress that, for a Riemannian manifold, any non-trivial geodesic has the same Young diagram, composed by a single column with $n=\dim M$ boxes, the scalar product $\langle\cdot|\cdot\rangle_{\gamma(t)}$ coincides with the Riemannian one, and $\RR_{\gamma(t)}(v) = \Sec(v,\dot{\gamma}(t))$. 

In this introduction, when we associate with a geodesic $\g(t)$ its Young diagram $\y$, we implicitly assume that $\g(t)$ is ample and equiregular. Notice that these assumptions are true for the generic geodesic, as we discuss more precisely in Sec.~\ref{s:gfyd}.

In the spirit of Theorem~\ref{t:model}, assume that the sub-Riemannian directional curvature is bounded from below by a quadratic form $Q$. Then, we associate a model LQ problem (i.e. matrices $A$ and $B$, depending on $\gamma$) which, roughly speaking, represents the linearisation of the sub-Riemannian structure along $\gamma$ itself,  with potential $Q$. We dub this \emph{model space} $\mathrm{LQ}(\y; Q)$, where $\y$ is the Young diagram of $\gamma$, and $Q$ represents the bound on the sub-Riemannian directional curvature. The first of our results can be stated as follows (see Theorem~\ref{t:comparison1}).
\begin{theorem}[sub-Riemannian comparison]\label{t:model2}
Let $\gamma(t)$ be a sub-Riemannian geodesic, with Young diagram $\y$, such that $\RR_{\gamma(t)} \geq Q_+$ for all $t\geq 0$. Then the first conjugate point along $\gamma(t)$ occurs at a time $t$ not greater than the first conjugate time of the model $\mathrm{LQ}(\y;Q_+)$. Similarly, if $\RR_{\gamma(t)} \leq Q_-$ for all $t\geq 0$, the first conjugate point along $\gamma(t)$ occurs at a time $t$ not smaller than the first conjugate time of $\mathrm{LQ}(\y;Q_-)$.
\end{theorem}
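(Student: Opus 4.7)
The plan is to reduce the statement to a monotonicity property of a matrix Riccati equation, in the spirit of the classical Jacobi--Riccati comparison of Riemannian geometry, but carried out in the symplectic setting that replaces the Levi-Civita connection in sub-Riemannian geometry.

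First, I would encode conjugate points along $\gamma(t)$ via the Jacobi curve $\Lambda(t)$ in the Lagrange Grassmannian of $T^*_{\gamma(0)}M$, defined as the image of the vertical Lagrangian under the linearized Hamiltonian flow along $\gamma$. Ampleness and equiregularity of $\gamma$ guarantee the existence of a canonical Darboux frame along $\gamma$ whose structural coefficients depend only on $\y$; in this frame the Jacobi equation takes the form of a linear Hamiltonian system
\[
\dot\xi = \begin{pmatrix} A & B \\ R(t) & -A^* \end{pmatrix}\xi,
\]
where $A=A(\y)$ and $B=B(\y)$ are combinatorial normal-form matrices determined by the Young diagram, and $R(t)$ is the matrix of $\RR_{\gamma(t)}$ in this frame. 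This is exactly the Hamiltonian system associated to the LQ problem with matrices $A,B$ and time-dependent potential $R(t)$; the model $\mathrm{LQ}(\y;Q_\pm)$ differs only in that the potential is the constant matrix $Q_\pm$.

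Next, I would translate conjugacy into the blow-up of a symmetric Riccati equation. Any Lagrangian subspace transversal to the vertical fiber is the graph of a symmetric matrix $V$, so the Jacobi curve can be written $\Lambda(t)=\mathrm{graph}\,V(t)$ on a maximal interval, and the first conjugate time is the first blow-up time of $V(t)$ starting from the singular initial condition corresponding to the vertical Lagrangian at $t=0^+$. Differentiating and using the Hamiltonian form above, $V$ satisfies a Riccati equation whose drift (the quadratic part and the linear part) depends only on $\y$ and whose source term is $R(t)$; the same equation with $R(t)$ replaced by $Q_\pm$ governs the model.

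The core of the argument is then a comparison lemma for symmetric Riccati equations with common drift: if the potentials $R_1(t), R_2(t)$ satisfy $R_1(t)\geq R_2(t)$ as quadratic forms and the corresponding solutions $V_1, V_2$ issue from the same initial datum, then $V_1(t)\geq V_2(t)$ on their common interval of definition, and hence $V_1$ blows up no later than $V_2$. Taking $R_1=R(t)$ and $R_2=Q_+$ yields the first half of the theorem; the reverse inequality $\RR_{\gamma(t)}\leq Q_-$ yields the second. The comparison lemma itself is standard: $W=V_1-V_2$ satisfies a Lyapunov-type linear equation with nonnegative source, and a monotonicity/Gronwall argument closes it.

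The main obstacle is the first step: constructing the canonical frame along $\gamma$ and checking that the drift matrices $A(\y),B(\y)$ obtained there are exactly those defining the model $\mathrm{LQ}(\y;Q)$. Only when the drifts agree do the two Riccati equations have a comparable form, so that the comparison lemma applies without extra error terms. A secondary technical difficulty is the singular initial condition, since the vertical Lagrangian corresponds to $V=\infty$ in some directions; one must show that the leading asymptotics of $V(t)$ as $t\to 0^+$ are determined solely by $\y$ (hence identical for $\gamma$ and for the model), so that the comparison can be initiated after a common blow-up at $t=0^+$. Both issues are settled by the structural theory of Jacobi curves and the Young-diagram-indexed normal form developed earlier in the paper.
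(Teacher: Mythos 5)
Your strategy coincides with the paper's: reduce conjugate times to the blow-up time of the matrix Riccati equation written in the canonical (Young-diagram) frame, note that the drift matrices $\Gamma_1(\y),\Gamma_2(\y)$ are by construction those of the model $\mathrm{LQ}(\y;Q_\pm)$, and compare the two Riccati flows with ordered potentials and the same singular initial datum. The singular datum is not really an obstacle: the paper handles $\lim_{t\to 0^+}V^{-1}=0$ by passing to the inverse $W=V^{-1}$, which solves a regular Cauchy problem with $W(0)=0$ (Lemma~\ref{l:limit} and Theorem~\ref{t:riccatilim}), which makes your ``common asymptotics at $t=0^+$'' precise.

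The gap is the final inference ``$V_1\geq V_2$ on the common interval, hence $V_1$ blows up no later than $V_2$''. For matrix Riccati equations a solution can a priori blow up with either sign (some $w^*V(t)w\to+\infty$, others $\to-\infty$), and a one-sided inequality between symmetric matrices only transmits blow-up in one sign direction: $V_1\geq V_2$ forces $V_1$ to explode where $V_2\to+\infty$, but says nothing where $V_2\to-\infty$, and symmetrically for the other bound. (Note also that with the paper's conventions the inequality is reversed: $R(t)\geq Q_+$ yields $V\leq V_{\y;Q_+}$, since the curvature enters the Riccati equation with a minus sign.) Hence, whichever convention you adopt, one of the two halves of the theorem requires knowing the \emph{direction} of the blow-up of the relevant solution, and this is precisely what the paper adds: Lemma~\ref{l:monotone} shows that the constant-coefficient model solution $V_{\y;Q}$ is monotone non-increasing, so if it blows up it does so to $-\infty$, which settles $t_c(\gamma)\leq t_c(\y;Q_+)$; and Lemma~\ref{l:monotone2} shows that any blow-up of the time-dependent solution $V(t)$ is to $-\infty$, proved by a secondary comparison with the constant potential $q\mathbb{I}\leq R(t)$ on the compact interval up to the blow-up time, which is what makes $t_c(\gamma)\geq t_c(\y;Q_-)$ work. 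Your Lyapunov/Gronwall comparison lemma is correct and standard, but without these two monotonicity statements the conclusion about conjugate (blow-up) times does not follow as stated.
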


In the Riemannian case, any non-trivial geodesic $\gamma$ has the same (trivial) Young diagram, and this leads to a simple LQ model with $A=0$ and $B=\mathbb{I}$ the identity matrix. Moreover, $\langle\cdot|\cdot\rangle_\gamma$ is the Riemannian scalar product and $\RR_\gamma = \Sec(\dot{\gamma},\cdot)$. Then, if Theorem~\ref{t:model2} holds with $Q_+ = \k \mathbb{I}$, the first conjugate point along the Riemannian geodesic, with directional curvature bounded by $\k$ occurs at a time $t$ not greater than the first conjugate time of the LQ model
\begin{equation}
\dot{x} = u, \qquad \phi_{t}(u) = \frac{1}{2}\int_{0}^{t}\left( |u|^2 - \k |x|^2\right)dt.
\end{equation}
It is well known that, when $\k >0$, this problem represents a simple $n$-dimensional harmonic oscillator, whose extremal trajectories lose optimality at time $t = \pi / \sqrt{\k}$. Thus we recover Theorem~\ref{t:model}. On the other hand, in the sub-Riemannian setting, due to the intrinsic anisotropy of the structure different geodesics have different Young diagrams, resulting in a rich class of LQ models, with non-trivial drift terms. The directional sub-Riemannian curvature represents the potential ``experienced'' in a neighbourhood of the geodesic.

We stress that the generic $\mathrm{LQ}(\y;Q)$ model may have infinite conjugate time. However, there exist necessary and sufficient conditions for its finiteness, that are the sub-Riemannian counterpart of the ``Riemannian'' condition $\k>0$ of Theorem~\ref{t:model}. Thus Theorem~\ref{t:model2} can be employed to prove both existence or non-existence of conjugate points along a given geodesic.


As Theorem~\ref{t:model} can be improved by considering a bound on the Ricci curvature in the direction of the geodesic, instead of the whole sectional curvature, also Theorem~\ref{t:model2} can be improved in the same spirit. In the sub-Riemannian case, however, the process of ``taking the trace'' is more delicate. Due to the anisotropy of the structure, it only makes sense to take \emph{partial} traces, leading to a number of Ricci curvatures (each one obtained as a partial trace on an invariant subspace of $T_{\gamma(t)} M$, determined by the Young diagram $\y$). In particular, for each \emph{level} $\alpha$ of the Young diagram (namely the collection of all the rows with the same length equal to, say, $\ell$) we have $\ell$ Ricci curvatures $\Ric_{\gamma(t)}^{\alpha_i}$, for $i=1,\ldots,\ell$. The \emph{size} of a level is the number $r$ of boxes in each of its columns $\lev_1,\ldots,\lev_\ell$.
\begin{center}
\begin{tikzpicture}[x=0.30mm, y=0.30mm, inner xsep=0pt, inner ysep=0pt, outer xsep=0pt, outer ysep=0pt]
\path[line width=0mm] (33.62,67.84) rectangle +(226.93,102.16);
\definecolor{L}{rgb}{0,0,0}
\definecolor{F}{rgb}{0.565,0.933,0.565}
\path[line width=0.60mm, draw=L, fill=F] (80.00,90.00) rectangle +(60.00,80.00);
\path[line width=0.60mm, draw=L] (100.00,170.00) -- (100.00,100.00);
\path[line width=0.60mm, draw=L] (100.00,170.00) -- (100.00,90.00);
\path[line width=0.60mm, draw=L] (120.00,170.00) -- (120.00,90.00);
\path[line width=0.30mm, draw=L, dash pattern=on 0.30mm off 0.50mm] (140.00,170.00) -- (160.00,170.00);
\path[line width=0.30mm, draw=L, dash pattern=on 0.30mm off 0.50mm] (140.00,90.00) -- (160.00,90.00);
\path[line width=0.60mm, draw=L, fill=F] (160.00,90.00) rectangle +(20.00,80.00);
\path[line width=0.15mm, draw=L] (80.00,150.00) -- (140.00,150.00);
\path[line width=0.15mm, draw=L] (80.00,130.00) -- (140.00,130.00);
\path[line width=0.15mm, draw=L] (80.00,110.00) -- (140.00,110.00);
\path[line width=0.15mm, draw=L] (160.00,150.00) -- (180.00,150.00);
\path[line width=0.15mm, draw=L] (160.00,130.00) -- (180.00,130.00);
\path[line width=0.15mm, draw=L] (160.00,110.00) -- (180.00,110.00);
\draw(90.00,70.00) node[anchor=base]{\fontsize{8.54}{10.24}\selectfont $\lev_1$};
\draw(110.00,70.00) node[anchor=base]{\fontsize{8.54}{10.24}\selectfont $\lev_2$};
\draw(130.00,70.00) node[anchor=base]{\fontsize{8.54}{10.24}\selectfont $\lev_3$};
\draw(170.00,70.00) node[anchor=base]{\fontsize{8.54}{10.24}\selectfont $\lev_\ell$};
\draw(150.00,70.00) node[anchor=base]{\fontsize{8.54}{10.24}\selectfont $\ldots$};
\path[line width=0.15mm, draw=L] (80.00,85.00) -- (80.00,80.00) -- (140.00,80.00) -- (140.00,85.00);
\path[line width=0.15mm, draw=L] (160.00,85.00) -- (160.00,80.00) -- (180.00,80.00) -- (180.00,85.00);
\path[line width=0.15mm, draw=L] (100.00,85.00) -- (100.00,80.00);
\path[line width=0.15mm, draw=L] (120.00,85.00) -- (120.00,80.00);
\path[line width=0.15mm, draw=L] (75.00,170.00) -- (70.00,170.00);
\path[line width=0.15mm, draw=L] (75.00,90.00) -- (70.00,90.00);
\path[line width=0.15mm, draw=L] (70.00,170.00) -- (70.00,90.00);
\path[line width=0.15mm, draw=L] (240.00,145.00) rectangle +(0.00,0.00);
\path[line width=0.30mm, draw=L, fill=F] (260.00,110.00) rectangle +(40.00,40.00);
\path[line width=0.30mm, draw=L] (260.00,90.00) rectangle +(10.00,10.00);
\path[line width=0.30mm, draw=L] (260.00,100.00) rectangle +(30.00,10.00);
\path[line width=0.30mm, draw=L] (260.00,150.00) rectangle +(60.00,20.00);
\path[line width=0.15mm, draw=L] (260.00,160.00) -- (320.00,160.00);
\path[line width=0.15mm, draw=L] (260.00,140.00) -- (300.00,140.00);
\path[line width=0.15mm, draw=L] (260.00,130.00) -- (300.00,130.00);
\path[line width=0.15mm, draw=L] (260.00,120.00) -- (300.00,120.00);
\path[line width=0.15mm, draw=L] (270.00,170.00) -- (270.00,100.00);
\path[line width=0.15mm, draw=L] (280.00,170.00) -- (280.00,100.00);
\path[line width=0.15mm, draw=L] (290.00,170.00) -- (290.00,110.00);
\path[line width=0.15mm, draw=L] (300.00,150.00) -- (300.00,170.00);
\path[line width=0.15mm, draw=L] (310.00,150.00) -- (310.00,170.00);
\path[line width=0.15mm, draw=L] (255.00,150.00) -- (250.00,150.00) -- (195.00,170.00) -- (190.00,170.00);
\path[line width=0.15mm, draw=L] (190.00,90.00) -- (195.00,90.00) -- (250.00,110.00) -- (255.00,110.00);
\draw(65.00,125.00) node[anchor=base east]{\fontsize{8.54}{10.24}\selectfont size $r$};
\draw(310.00,125.00) node[anchor=base west]{\fontsize{8.54}{10.24}\selectfont level $\alpha$ of $\y$};
\end{tikzpicture}%

\end{center}
The partial tracing process leads to our main result (see Theorem~\ref{t:comparisonaverage}). 

\begin{theorem}[sub-Riemannian average comparison]\label{t:model3}
Let $\gamma(t)$ be a sub-Riemannian geodesic with Young diagram $\y$. Consider a fixed level $\lev$ of $\y$, with length $\ell$  and size $r$. Then, if
\begin{equation}
\frac{1}{r}\Ric_{\gamma(t)}^{\lev_i} \geq \k_i, \qquad \forall i=1,\ldots,\ell, \qquad \forall t \geq 0,
\end{equation}
the first conjugate time $t_c(\g)$ along the geodesic satisfies $t_c(\g) \leq t_c(\k_1,\ldots,\k_\ell)$.
\end{theorem}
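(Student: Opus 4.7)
The plan is to reduce the first conjugate time along $\g(t)$ to the first blow-up time of a matrix Riccati equation in a canonical frame adapted to the Young diagram $\y$, and then to perform partial traces level by level in order to compare that Riccati equation to the one associated with the model $\mathrm{LQ}(\y;\diag(\k_1,\ldots,\k_\ell))$. This refines the argument underlying Theorem~\ref{t:model2}: there the bound on $\RR_{\g(t)}$ is matricial, whereas here only the scalar partial traces $\tfrac{1}{r}\Ric^{\lev_i}_{\g(t)}$ are controlled.

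First, as for Theorem~\ref{t:model2}, the conjugate times along $\g$ coincide with the blow-up times of the maximal solution $V(t)$ of a matrix Riccati equation
\begin{equation}
\dot V + V B B^* V + V A + A^* V - R(t) = 0,
\end{equation}
where $(A,B)$ is the intrinsic linearisation of the sub-Riemannian structure along $\g$ (depending only on $\y$) and $R(t)$ is the matrix of $\RR_{\g(t)}$ in a canonical Darboux frame. The conjugate time of $\mathrm{LQ}(\y;Q)$ is obtained analogously with $R(t)\equiv Q$ constant. The canonical frame splits $T^*_{\g(t)} M$ into blocks labelled by the boxes of $\y$ and grouped by levels; within a fixed level $\lev$ of length $\ell$ and size $r$, the restriction of $(A,B)$ has the standard Jordan form dictated by $\y$, and the $i$th diagonal $r\times r$ sub-block of $R(t)$ has normalised trace equal to $\tfrac{1}{r}\Ric_{\g(t)}^{\lev_i}$.

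Next, I would average the Riccati equation over the $r$ ``column'' directions of the level $\lev$. This produces an $\ell\times\ell$ reduced Riccati system whose only curvature input is the tuple $\bigl(\tfrac{1}{r}\Ric^{\lev_1}_{\g(t)},\ldots,\tfrac{1}{r}\Ric^{\lev_\ell}_{\g(t)}\bigr)$, bounded from below by $(\k_1,\ldots,\k_\ell)$ by hypothesis. A Cauchy--Schwarz inequality applied to the quadratic term $V B B^* V$ after the partial trace converts this averaged equation into a differential inequality dominated by the Riccati equation of $\mathrm{LQ}(\y;\diag(\k_1,\ldots,\k_\ell))$. The standard monotonicity principle for matrix Riccati flows then yields that the maximal solution of the latter blows up no later than $V(t)$, giving $t_c(\g)\leq t_c(\k_1,\ldots,\k_\ell)$.

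The main obstacle I expect is this partial-trace / convexity step. In the Riemannian Bonnet--Myers case a full trace together with the elementary bound $\trace(V^2)\geq (\trace V)^2/\dim M$ is enough. Here only \emph{partial} traces are available, and the drift $A$ couples different rows within a single level of $\y$. One has to verify that the cross-terms introduced by $A$ descend consistently to the reduced system and enter with a sign compatible with the comparison. This is precisely where the Jordan structure of $(A,B)$ prescribed by $\y$ plays an essential role, and where the sub-Riemannian average comparison genuinely departs from its Riemannian counterpart.
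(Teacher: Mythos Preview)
Your overall strategy matches the paper's: reduce to a Riccati equation in the canonical frame, perform a level-wise reduction, use a Cauchy--Schwarz-type inequality on the quadratic term, and finish by Riccati comparison with the constant-curvature model. Two points in your write-up, however, misidentify where the real work happens.

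First, the drift $A=\Gamma_1^*$ does \emph{not} couple different rows of $\y$: both $\Gamma_1$ and $\Gamma_2$ are block-diagonal with respect to the row decomposition. The coupling between rows lives entirely in the quadratic term $V\Gamma_2 V$, through the off-diagonal blocks $V_{ab}$ of $V$. In the paper this is handled by a preliminary \emph{splitting} step before any averaging: the diagonal block $V_{aa}$ satisfies its own $n_a\times n_a$ Riccati equation with an effective curvature $\wt R_{aa}(t)=R_{aa}(t)+\sum_{b\neq a}V_{ab}\Gamma_2(\y_b)V_{ab}^*$, and the key observation is that the extra term is automatically $\geq 0$ because $\Gamma_2\geq 0$. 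Only after this does one average the blocks $V_{aa}$ over the rows $a$ in the level; the Cauchy--Schwarz lemma (a matrix version, Lemma~\ref{l:cs}) is then applied to the \emph{same-row} quadratic terms $\tfrac{1}{r}\sum_a V_{aa}\Gamma_2 V_{aa}-V_\lev\Gamma_2 V_\lev$, not to cross-row terms.

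Second, you should not expect the averaged curvature to come out diagonal for free. That $\tfrac{1}{r}\sum_{a\in\lev}R_{aa}(t)$ equals $\diag\bigl(\tfrac{1}{r}\Ric^{\lev_1}_{\g(t)},\ldots,\tfrac{1}{r}\Ric^{\lev_\ell}_{\g(t)}\bigr)$ relies on the \emph{normality} of $R(t)$ in the Zelenko--Li canonical frame, which forces the relevant off-diagonal traces to vanish. Without invoking this structural property the reduction to the $\ell\times\ell$ model $\mathrm{LQ}(\k_1,\ldots,\k_\ell)$ with diagonal potential does not go through.
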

In Theorem~\ref{t:model3}, $t_c(\k_1,\ldots,\k_\ell)$ is the first conjugate time of the LQ model associated with a Young diagram 	with a single row, of length $\ell$, and a diagonal potential $Q = \diag\{\k_1,\ldots,\k_\ell\}$. 

The hypotheses in Theorem~\ref{t:model3} are no longer bounds on a quadratic form as in Theorem~\ref{t:model2}, but a finite number of \emph{scalar} bounds. Observe that we have one comparison theorem for each level of the Young diagram of the given geodesic. In the Riemannian case, as we discussed earlier, $\y$ has only one level, of length $\ell =1$, of size $r=\dim M$. In this case there is single Ricci curvature, namely $\Ric_{\gamma(t)}^{\alpha_1} = \mathrm{Ric}^{\nabla}(\dot{\gamma}(t))$ and, if $\k_1> 0$ in Theorem~\ref{t:model3}, $t_c(\k_1) = \pi/\sqrt{\k_1} < +\infty$. We stress that, in order to have $t_c(\k_1,\ldots,\k_\ell) < +\infty$, the Riemannian condition $\mathrm{Ric}^{\nabla}(\dot{\gamma}) \geq \k_1 > 0$ must be replaced by more complicated inequalities on the bounds $\k_1,\ldots,\k_\ell$ on the sub-Riemannian Ricci curvatures. In particular, we allow also for some \emph{negative values} of such constants.

As an application of Theorem~\ref{t:model3}, we prove a sub-Riemannian version of the classical Bonnet-Myers theorem (see Theorem~\ref{t:bonnetmyers}).

\begin{theorem}[sub-Riemannian Bonnet-Myers]\label{t:model4}
Let $M$ be a connected, complete sub-Riemannian manifold, such that the generic geodesic has the same Young diagram $\y$. Assume that there exists a level $\alpha$ of length $\ell$ and size $r$ and constants $\k_1,\ldots,\k_\ell$ such that, for any length parametrized geodesic $\gamma(t)$
\begin{equation}
\frac{1}{r}\Ric^{\alpha_i}_{\gamma(t)} \geq \k_i, \qquad \forall i =1,\ldots,\ell, \qquad \forall t \geq 0.
\end{equation}
Then, if the polynomial
\begin{equation}
P_{\k_1,\ldots,\k_\ell}(x) := x^{2\ell} - \sum_{i=0}^{\ell -1} (-1)^{\ell-i}\k_{\ell -i} x^{2i}
\end{equation}
has at least one simple purely imaginary root, the manifold is compact, has diameter not greater than $t_c(\k_1,\ldots,\k_\ell)<+\infty$. Moreover, its fundamental group is finite.
\end{theorem}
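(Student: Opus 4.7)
The plan is to adapt the classical Bonnet--Myers argument, using Theorem~\ref{t:model3} in place of the Riemannian Ricci bound. The steps are: (i) apply Theorem~\ref{t:model3} to bound the first conjugate time along every length-parametrized geodesic by $t_c(\k_1,\ldots,\k_\ell)$; (ii) verify that the polynomial hypothesis forces $t_c(\k_1,\ldots,\k_\ell)<+\infty$; (iii) conclude that no length-minimizing geodesic can exceed this bound, whence $\diam(M)\le t_c(\k_1,\ldots,\k_\ell)$; (iv) combine completeness and bounded diameter to get compactness via the sub-Riemannian Hopf--Rinow theorem; (v) lift to the universal cover to deduce $|\pi_1(M)|<\infty$.

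Step (i) is a direct application of Theorem~\ref{t:model3} for generic (ample, equiregular with Young diagram $\y$) geodesics. Since a length-minimizer need not be generic, I would extend the bound to an arbitrary minimizer by perturbing the initial covector so that the perturbed geodesic becomes generic, invoking Theorem~\ref{t:model3} on the perturbation, and passing to the limit using upper-semicontinuity of the first conjugate time in the initial data. Abnormal minimizers have to be handled separately: a small perturbation of the endpoints reduces to the normal case, because the generic minimizer between nearby points is normal. Step (iii) then follows because a strongly normal geodesic loses optimality past its first conjugate point.

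Step (ii) is the main computational content. The relevant model is $\mathrm{LQ}(\y;Q)$ with a single-row Young diagram of length $\ell$ and diagonal potential $Q=\diag\{\k_1,\ldots,\k_\ell\}$. Its extremal dynamics are governed by a $(2\ell)\times(2\ell)$ Hamiltonian matrix whose characteristic polynomial is, after the substitution $\lambda^2\mapsto -x^2$, precisely $P_{\k_1,\ldots,\k_\ell}$. A \emph{simple} purely imaginary root of $P_{\k_1,\ldots,\k_\ell}$ thus corresponds to a genuine oscillatory mode of the extremal flow (as opposed to a degenerate eigenvalue producing only polynomial solutions), which forces a finite-time blow-up of the associated Jacobi/Riccati equation, i.e.\ a finite conjugate time. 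This should be deducible from the paper's general analysis of LQ conjugate times.

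Steps (iv) and (v) are standard. Bounded diameter plus metric completeness gives compactness via Hopf--Rinow. For the fundamental group, one lifts the sub-Riemannian structure to the universal cover $\widetilde{M}$; the hypotheses being local and geodesic-wise, they pass to $\widetilde{M}$, so the same diameter bound applies there. Hence $\widetilde{M}$ is compact, and the fiber $\pi_1(M)$ of the covering, being a discrete subset of a compact space, is finite. The main obstacle I anticipate is Step (ii), namely rigorously translating the simple-purely-imaginary-root condition on $P_{\k_1,\ldots,\k_\ell}$ into finiteness of the LQ conjugate time; a secondary but nontrivial difficulty in Steps (i) and (iii) is the careful density argument that extends the conjugate-time bound from generic geodesics to arbitrary (possibly abnormal) minimizers.
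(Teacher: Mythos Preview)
Your overall architecture matches the paper's proof: apply the average comparison (Theorem~\ref{t:comparisonaverage}) to get a uniform bound $t_c(\gamma)\le t_c(\k_1,\ldots,\k_\ell)$ on generic geodesics, invoke the LQ finiteness criterion (Theorem~\ref{t:ARS}) to turn the polynomial hypothesis into $t_c(\k_1,\ldots,\k_\ell)<+\infty$, conclude a diameter bound, then compactness by completeness, and finish with the universal-cover argument for $\pi_1$. Your Step~(ii) is exactly what the paper does: the characteristic polynomial of the Hamiltonian matrix of $\mathrm{LQ}(\k_1,\ldots,\k_\ell)$ is $P_{\k_1,\ldots,\k_\ell}$, and a simple purely imaginary root gives a one-dimensional (hence odd-dimensional) Jordan block, so Theorem~\ref{t:ARS} yields finitely many conjugate times.

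The substantive difference is in how you pass from generic geodesics to the full diameter bound. You propose to perturb an arbitrary minimizer to a generic one and pass to the limit via semicontinuity of the first conjugate time; you correctly flag this as delicate, and indeed the semicontinuity direction you would need is not obvious (for a non-ample limit geodesic the Young diagram may change and the comparison theorem does not apply directly). The paper sidesteps this entirely: for fixed $x_0$ it invokes the fact that the set $\Sigma_{x_0}$ of points reached by a unique, strictly normal, conjugate-free minimizer is open and dense in $M$ (Agrachev's smoothness theorem), intersects it with the image of the generic covectors $A_{x_0}$ to get an open dense $\Sigma'_{x_0}$, bounds $d(x_0,x)\le t_c(\k_1,\ldots,\k_\ell)$ for $x\in\Sigma'_{x_0}$, and then uses continuity of the distance function (not of conjugate times) to get the diameter bound on all of $M$. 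This buys you exactly the passage you were worried about, without any limit argument on conjugate times or any separate treatment of abnormal minimizers.
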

In the Riemannian setting $\ell = 1$, $r = \dim M$ and the condition on the roots of $P_{\k_1}(x) = x^2 + \k_1$ is equivalent to $\k_1>0$. Then we recover the classical Bonnet-Myers theorem (see Theorem~\ref{t:bm-riem-intro}).

Finally we apply our techniques to obtain information about the conjugate time of geodesics on 3D \emph{unimodular} Lie groups. Left-invariant structures on 3D Lie groups are the basic examples of sub-Riemannian manifolds and the study of such structures is the starting point to understand the general properties of sub-Riemannian geometry. 

A complete classification of such structures, up to local sub-Riemannian isometries, is given in \cite[Thm. 1]{miosr3d}, in terms of the two basic geometric invariants $\chi\geq 0 $ and $\kappa$, that are constant for left-invariant structures. In particular, for each choice of the pair $(\chi,\kappa)$, there exists a unique unimodular group in this classification.
Even if left-invariant structures possess the symmetries inherited by the group structure, the sub-Riemannian geodesics and their conjugate loci have been studied only in some particular cases where explicit computations are possible. 

The conjugate locus of left-invariant structures has been completely determined for the cases corresponding to $\chi=0$, that are the Heisenberg group \cite{gersh} and the semisimple Lie groups $\mathrm{SU}(2),\mathrm{SL}(2)$ where the metric is defined by the Killing form \cite{boscainrossi}.
On the other hand, when $\chi>0$, only few cases have been considered up to now. In particular, to our best knowledge, only the sub-Riemannian structure on the group of motions of the Euclidean (resp. pseudo-Euclidean) plane $\mathrm{SE}(2)$ (resp. $\mathrm{SH}(2)$), where $\chi=\kappa>0$ (resp. $\chi = -\kappa >0$), has been considered \cite{sachkovmois,sachkov,sachkovpseudo}. 

As an application of our results, we give an explicit sufficient condition for a geodesic $\gamma$ on a unimodular Lie group to have a finite conjugate time, together with an estimate of it. The condition is expressed in terms of a lower bound on a constant of the motion $E(\gamma)$ associated with the given geodesic (see Theorem~\ref{t:Egrande}).

\begin{theorem}[Conjugate points for 3D structures]
Let $M$ be a 3D unimodular Lie group endowed with a contact left-invariant sub-Riemannian structure with invariants $\chi>0$ and $\kappa\in\R$. Then there exists $\barE=\barE(\chi,\kappa)$ such that every length parametrized geodesic $\g$ with $E(\gamma)\geq \barE$ has a finite conjugate time.  
\end{theorem}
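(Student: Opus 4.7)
The plan is to apply the sub-Riemannian comparison results of the paper (in particular Theorem~\ref{t:model2} or the sharper Ricci-type Theorem~\ref{t:model3}) to geodesics on a 3D contact left-invariant sub-Riemannian structure, after computing the relevant curvature invariants explicitly along such geodesics. First, I would use the classification of \cite{miosr3d}: fixing $\chi>0$ and $\kappa\in\R$ selects, up to isometry, a unique unimodular Lie group with a left-invariant orthonormal frame whose structure constants are explicit functions of $(\chi,\kappa)$. In this frame, the Hamiltonian flow on $T^*M$ descends, via the Reeb vector field of the contact distribution, to an explicit ODE on the dual of the distribution, and the quantity $E(\gamma)$ (essentially the component of the covector along the Reeb direction, which is a Casimir/first integral for contact left-invariant Hamiltonians) is constant along the geodesic.

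Next, since the structure is contact and 3D, every ample geodesic has the same Young diagram $\y$, consisting of a row of length two and a row of length one. I would compute the sub-Riemannian directional curvature $\RR_{\gamma(t)}$ (or equivalently the decomposition into the Ricci invariants $\Ric^{\alpha_i}_{\gamma(t)}$ attached to the two levels of $\y$) along a length-parametrized geodesic. Because the frame is left-invariant, these curvature invariants depend on the geodesic only through $\chi$, $\kappa$, and the first integral $E(\gamma)$; in fact one expects them to grow quadratically in $E(\gamma)$ with leading coefficient determined by $\chi$ (the anisotropy invariant driving the non-triviality of the model), and a bounded correction depending on $\kappa$.

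Once these formulas are in hand, I would choose constants $\k_1,\k_2$ (for the row of length two) and $\k$ (for the row of length one) that lower-bound the three Ricci invariants uniformly in $t\geq 0$; by left-invariance and the fact that $E(\gamma)$ is constant, these lower bounds are $E(\gamma)$-dependent constants. Plugging into Theorem~\ref{t:model3} reduces the problem to a finiteness criterion for the conjugate time of the associated LQ model $\mathrm{LQ}(\y;Q)$, which by the Bonnet-Myers-type criterion of Theorem~\ref{t:model4} amounts to checking that the characteristic polynomial $P_{\k_1,\k_2}$ (together with the trivial contribution of the length-one row) has a simple purely imaginary root.

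The main obstacle is therefore the explicit curvature computation: one must identify $\RR_{\gamma(t)}$ (or its partial traces) for a generic left-invariant contact geodesic in closed form in terms of $\chi$, $\kappa$, and $E(\gamma)$, and then locate the threshold $\barE=\barE(\chi,\kappa)$ beyond which the root condition on $P_{\k_1,\k_2}$ holds. Once this is done, the conjugate time estimate is automatic from Theorem~\ref{t:model3}, and the bound itself is the first conjugate time $t_c(\k_1,\k_2)$ of the corresponding LQ model, which can in principle be read off from the imaginary roots of $P_{\k_1,\k_2}$.
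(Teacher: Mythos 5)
Your overall strategy coincides with the paper's: fix the canonical left-invariant frame coming from the classification of \cite{miosr3d}, use the constant of the motion $E$, produce uniform lower bounds $\k_1,\k_2$ for the two Ricci curvatures of the length-two level, and invoke the finiteness criterion for $\mathrm{LQ}(\k_1,\k_2)$ (Theorem~\ref{t:bonnetmyers2} together with Example~\ref{ex:2}). However, the step you defer as ``the main obstacle'' is in fact the entire content of the paper's proof (Proposition~\ref{p:framenon0} plus the estimates in Theorem~\ref{t:Egrande}), and two of the assumptions guiding your plan are incorrect in a way that would derail the verification if followed literally. First, $E$ is not ``essentially the Reeb component of the covector'': for $\chi>0$ one has $\{H,h_0\}=2\chi h_1h_2\neq 0$, so $h_0$ is not a first integral; the correct constant of the motion is $E=h_0^2/(2\chi)+h_2^2$. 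Second, the curvature invariants do not depend on the geodesic only through $(\chi,\kappa,E)$: along a geodesic, $\Ric^{\lev_1}_{\g(t)}$ and $\Ric^{\lev_2}_{\g(t)}$ vary in $t$ through $h_0(t)^2$ and $h_1^2-h_2^2$. The uniform-in-$t$ bounds you need come from combining $H=1/2$ with the constancy of $E$, which forces $2\chi(E-1)\leq h_0(t)^2\leq 2\chi E$; this is the mechanism that converts time-dependent curvature into $E$-dependent constants, and it is absent from your outline.

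Third, and most importantly, your expectation that the curvatures ``grow quadratically in $E(\gamma)$ with positive leading coefficient'' fails for the second invariant: the uniform lower bounds one actually obtains are $\k_1=2\chi E-5\chi+\kappa$ (linear in $E$, eventually positive) and $\k_2=2\chi^{2}(15-26E)-2\chi\kappa$, which is negative and decreases linearly in $E$. Hence one cannot conclude by positivity of both Ricci bounds; finiteness of $t_c(\k_1,\k_2)$ must be checked through the condition $\k_1>0$ and $4\k_2+\k_1^2>0$ of Eq.~\eqref{eq:3dcond}, and this holds for large $E$ only because $4\k_2+\k_1^2=4\chi^2E^2+a(\chi,\kappa)E+b(\chi,\kappa)$ has positive leading coefficient, $\barE(\chi,\kappa)$ being its largest positive root. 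Without this computation, and with the wrong qualitative picture of how $\Ric^{\lev_2}$ behaves, your argument does not yet locate the threshold $\barE$, so the proposal has a genuine gap precisely at the step that makes the theorem true.
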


The cases corresponding to $\chi=0$ are $\mathbb{H}$, $\mathrm{SU}(2)$ and $\mathrm{SL}(2)$, where $\kappa=0,1,-1$, respectively. For these structures we recover the exact estimates for the first conjugate time of a length parametrized geodesic (see Section \ref{s:chi0}).

\subsection{Related literature}

The curvature employed in this paper has been introduced for the first time by Agrachev and Gamkrelidze in \cite{agrafeedback}, Agrachev and Zelenko in \cite{geometryjacobi1} and successively extended by Zelenko and Li in \cite{lizel}, where also the Young diagram is introduced for the first time in relation with the extremals of a variational problem. This paper is not the first one to investigate comparison-type results on sub-Rieman\-nian manifolds, but has been inspired by many recent works in this direction that we briefly review.

In \cite{AAPL} Agrachev and Lee investigate a generalisation of the measure contraction property (MCP) to 3D sub-Riemannian manifolds. The generalised MCP of Agrachev and Lee is expressed in terms of solutions of a particular 2D matrix Riccati equation for sub-Riemannian extremals, and this is one of the technical points that mostly inspired the present paper.

In \cite{mcpcontact} Lee, Li and Zelenko pursue further progresses for sub-Riemannian Sasakian contact structures, which posses transversal symmetries. In this case, it is possible to exploit the Riemannian structure induced on the quotient space to write the curvature operator, and the authors recover sufficient condition for the contact manifold to satisfy the generalised MCP defined in \cite{AAPL}. Moreover, the authors perform the first step in the decoupling of the matrix Riccati equation for different levels of the Young diagram (see the splitting part of the proof of Theorem~\ref{t:comparisonaverage} for more details).

The MCP for higher dimensional sub-Riemannian structures has also been investigated in \cite{RiffordCarnot} for Carnot groups.

We also mention that, in \cite{lizel2}, Li and Zelenko prove comparison results for the number of conjugate points of curves in a Lagrange Grassmanian associated with sub-Riemannian structures with symmetries. In particular, \cite[Cor. 4]{lizel2} is equivalent to Theorem~\ref{t:model2}, but obtained with differential topology techniques and with a different language. However, to our best knowledge, it is not clear how to obtain an averaged version of such comparison results with these techniques, and this is yet another motivation that led to Theorem~\ref{t:model3}. 

In \cite{garofalob}, Baudoin and Garofalo prove, with heat-semigroup techniques, a sub-Riemannian version of the Bonnet-Myers theorem for sub-Riemannian manifolds with transverse symmetries that satisfy an appropriate generalisation of the Curvature Dimension (CD) inequalities introduced in the same paper. In \cite{baudoincontact}, Baudoin and Wang generalise the previous results to contact sub-Riemannian manifolds, removing the symmetries assumption. See also \cite{baudoinmunivegarofalo,garobonneinequalities} for other comparison results following from the generalised CD condition.

Even though in this paper we discuss only sub-Riemannian structures, these techniques can be applied to the extremals of any affine optimal control problem, a general framework including (sub)-Riemannian, (sub)-Finsler manifolds, as discussed in \cite{curvature}. 
For example, in \cite{agrafeedback}, the authors prove a comparison theorem for conjugate points along extremals associated with \emph{regular} Hamiltonian systems, such as those corresponding to Riemannian and Finsler geodesics. Finally, concerning comparison theorems for Finsler structures one can see, for example, \cite{ohtanocit,ohtasturm,wuxin}.

\subsection{Structure of the paper}
The plan of the paper is as follows. In Sec.~\ref{s:prel} we provide the basic definitions of sub-Riemannian geometry, and in particular the growth vector and the Young diagram of a sub-Riemannian geodesic. In Sec.~\ref{s:Jac} we revisit the theory of Jacobi fields. In Sec.~\ref{s:microlocal} we introduce the main technical tool, that is the generalised matrix Riccati equation, and the appropriate comparison models. Then, in Sec.~\ref{s:average} we provide the ``average'' version of our comparison theorems, transitioning from sectional-curvature type results to Ricci-curvature type ones. In Sec.~\ref{s:bm}, as an application, we prove a sub-Riemannian Bonnet-Myers theorem. Finally, in Sec.~\ref{s:3d}, we apply our theorems to obtain some new results on conjugate points for 3D left-invariant sub-Riemannian structures.
\section{Preliminaries}\label{s:prel}

Let us recall some basic facts in sub-Riemannian geometry. We refer to \cite{nostrolibro} for further details. 

Let $M$ be a smooth, connected manifold of dimension $n \geq 3$. A sub-Riemannian structure on $M$ is a pair $(\distr,\metr)$ where $\distr$ is a smooth vector distribution of constant rank $k\leq n$ satisfying the \emph{H\"ormander condition} (i.e. $\mathrm{Lie}_x\distr = T_x M$, $\forall x \in M$) and $\metr$ is a smooth Riemannian metric on $\distr$. A Lipschitz continuous curve $\g:[0,T]\to M$ is \emph{horizontal} (or \emph{admissible}) if $\dot\g(t)\in\distr_{\g(t)}$ for a.e. $t \in [0,T]$.
Given a horizontal curve $\g:[0,T]\to M$, the \emph{length of $\g$} is
\begin{equation}
\ell(\g)=\int_0^T \|\dot{\g}(t)\|dt,
\end{equation}
where $\|\cdot\|$ denotes the norm induced by $\metr$. The \emph{sub-Riemannian distance} is the function
\begin{equation}
d(x,y):=\inf \{\ell(\g)\mid \g(0)=x,\g(T)=y, \g\, \mathrm{horizontal}\}.
\end{equation}
The connectedness of $M$ and the H\"ormander condition guarantee the finiteness and the continuity of $d :M\times M \to \R$ with respect to the topology of $M$ (Rashevsky-Chow theorem). The space of vector fields on $M$ (smooth sections of $TM$) is denoted by $\VecM$. Analogously, the space of horizontal vector fields on $M$ (smooth sections of $\distr$) is denoted by $\VecH$.

\begin{example}
A sub-Riemannian manifold of odd dimension is \emph{contact} if $\distr= \ker \omega$, where $\omega$ is a one-form and $d\omega|_{\distr} $ is non degenerate. The \emph{Reeb vector field} $X_0 \in \VecM$ is the unique vector field such that $d\omega(X_0,\cdot) = 0$ and $\omega(X_0) = 1$.
\end{example}
\begin{example}
Let $M$ be a Lie group, and $L_x : M \to M$ be the left translation by $x \in M$. A sub-Riemannian structure $(\distr,\metr)$ is \emph{left-invariant} if $d_y L_x : \distr_y \to \distr_{L_x y}$ and is an isometry w.r.t. $\metr$ for all $x,y \in M$. Any Lie group admits left invariant structures obtained by choosing a scalar product on its Lie algebra and transporting it on the whole $M$ by left translation.
\end{example}

Locally, the pair $(\distr,\metr)$ can be given by assigning a set of $k$ smooth vector fields that span $\distr$, orthonormal for $\metr$. In this case, the set $\{X_1,\ldots,X_k\}$ is called a \emph{local orthonormal frame} for the sub-Riemannian structure. Finally, we can write the system in ``control form'', namely for any horizontal curve $\gamma:[0,T] \to M$ there is a \emph{control} $u \in L^\infty([0,T],\R^k)$ such that
\begin{equation}\label{eq:lnonce0}
\dot \gamma(t)=\sum_{i=1}^k u_i(t) X_i|_{\gamma(t)}, \qquad \text{a.e. } t \in [0,T].
\end{equation}

\subsection{Minimizers and geodesics}
A sub-Riemannian \emph{geodesic} is an admissible curve $\g:[0,T]\to M$ such that $\|\dot\g(t)\|$ is constant and for every sufficiently small interval $[t_1,t_2]\subseteq [0,T]$, the restriction $\g|_{[t_1,t_2]}$ minimizes the length between its endpoints. The length of a geodesic is invariant by reparametrization of the latter. Geodesics for which $\|\dot\g(t)\| = 1$ are called \emph{length parametrized} (or of \emph{unit speed}). A sub-Riemannian manifold is said to be \emph{complete} if $(M,d)$ is complete as a metric space.

With any sub-Riemannian structure we associate the Hamiltonian function $H \in C^\infty(T^*M)$
\begin{equation}
H(\lambda) = \frac{1}{2}\sum_{i=1}^k\la \lambda, X_i\ra^2, \qquad \forall \lambda \in T^*M,
\end{equation}
in terms of any local frame $X_1,\ldots,X_k$, where $\la\lambda,\cdot\ra$ denotes the action of the covector $\lambda$ on vectors.  Let $\sigma$ be the canonical symplectic form on $T^*M$. With the symbol $\vec{a}$ we denote the Hamiltonian vector field on $T^*M$ associated with a function $a \in C^\infty(T^*M)$. Indeed $\vec{a}$ is defined by the formula $da = \sigma(\cdot,\vec{a})$. For $i=1,\ldots,k$ let $h_i \in C^\infty(T^*M)$ be the linear-on-fibers functions defined by $h_i(\lambda):= \la \lambda,X_i\ra$. Notice that
\begin{equation}
H = \frac{1}{2}\sum_{i=1}^k h_i^2, \qquad \vec{H} = \sum_{i=1}^k h_i \vec{h}_i.
\end{equation}

Trajectories minimizing the distance between two points are solutions of first-order necessary conditions for optimality, which in the case of sub-Riemannian geometry are given by a weak version of the Pontryagin Maximum Principle (\cite{pontrybook}, see also \cite{nostrolibro} for an elementary proof). We denote by $\pi:T^*M \to M$ the standard bundle projection.

\begin{theorem}\label{t:pmpw}
Let $\gamma:[0,T] \to M$ be a sub-Riemannian geodesic associated with a non-zero control $u \in L^\infty([0,T],\R^k)$. Then there exists a Lipschitz curve $\lambda: [0,T] \to
T^*M$, such that $\pi \circ \lambda = \gamma$ and only one of the following conditions holds for a.e. $t \in [0,T]$:
\begin{itemize}
\item[(i)] $\dot\lambda(t) = \vec{H}|_{\lambda(t)}$ and $h_i(\lambda(t)) = u_i(t)$,
\item[(ii)] $\dot\lambda(t) = \displaystyle \sum_{i=1}^k u_i(t) \vec{h}_i|_{\lambda(t)}$, $\lambda(t) \neq 0$ and $h_i(\lambda(t)) = 0$.
\end{itemize}
\end{theorem}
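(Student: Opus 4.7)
The plan is to deduce the statement from the Pontryagin Maximum Principle via the classical endpoint-map argument, following the route outlined in the textbook references. First I would reformulate the problem: a length minimizer $\gamma$ with constant speed $\|\dot\gamma\|=c$ is, on the space $L^2([0,T],\R^k)$, precisely a minimizer of the energy functional $J(u)=\tfrac12\int_0^T |u(t)|^2 dt$ subject to the constraint that the associated horizontal curve joins $x_0$ to $x_1$. This replaces the non-smooth length by a smooth functional and lets me apply differential calculus.

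Next I would introduce the \emph{endpoint map} $E_{x_0}\colon u\mapsto \gamma_u(T)$, defined on a neighborhood of the reference control $u$ in $L^\infty([0,T],\R^k)$, where $\gamma_u$ solves $\dot\gamma=\sum_i u_i X_i|_\gamma$ with $\gamma_u(0)=x_0$. A standard computation using the variation of constants formula gives, for $v\in L^\infty([0,T],\R^k)$,
\begin{equation}
d_u E_{x_0}(v) \;=\; \sum_{i=1}^k \int_0^T v_i(t)\, (P_{T,t})_* X_i|_{\gamma(t)}\, dt,
\end{equation}
where $P_{T,t}$ is the (non-autonomous) flow from time $t$ to time $T$ of the time-dependent vector field $\sum_i u_i(t) X_i$. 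Because the minimizer $u$ is a constrained critical point of $J$ subject to $E_{x_0}(u)=x_1$, a Lagrange multiplier rule yields one of two alternatives: either (N) $d_u E_{x_0}$ is surjective and there exists $\lambda_T\in T^*_{x_1}M$ with $\lambda_T\circ d_u E_{x_0}= d J_u$, or (A) $d_u E_{x_0}$ is not surjective and there exists a nonzero $\lambda_T\in T^*_{x_1}M$ annihilating $\mathrm{Im}(d_u E_{x_0})$.

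Define the Lipschitz lift $\lambda(t):=(P_{T,t})^*\lambda_T\in T^*_{\gamma(t)}M$. The pullback relation shows that $\lambda$ is absolutely continuous and satisfies $\dot\lambda(t)=\sum_i u_i(t)\vec h_i|_{\lambda(t)}$ for a.e.\ $t$. Testing the two Lagrange conditions on characteristic functions $v_i=\chi_{[s,s+\varepsilon]}e_i$ and letting $\varepsilon\to 0$ converts them into pointwise identities: in case (N) I obtain $h_i(\lambda(t))=u_i(t)$ for a.e.\ $t$ and all $i$, and substituting this into the evolution equation gives $\dot\lambda(t)=\sum_i h_i(\lambda(t))\vec h_i|_{\lambda(t)}=\vec H|_{\lambda(t)}$, which is alternative (i); in case (A) I obtain $h_i(\lambda(t))=0$ for a.e.\ $t$ and all $i$, together with $\lambda(t)\neq 0$ (by transport of $\lambda_T\neq 0$ through a diffeomorphism), which is alternative (ii).

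The main technical obstacle I expect is the careful justification of the Lagrange multiplier step in the infinite-dimensional $L^\infty$ setting (in particular the open-mapping argument that promotes surjectivity of $d_uE_{x_0}$ to local surjectivity of $E_{x_0}$, needed to rule out the existence of an annihilating covector in the normal-and-regular regime), and the corresponding measure-theoretic cleanup to turn the integral identities into pointwise a.e.\ identities. Apart from this, the argument is essentially a direct transcription of the classical PMP to the sub-Riemannian minimum-energy problem, with the clean Hamiltonian form of alternative (i) being a consequence of the fact that the Legendre transform of the quadratic cost $\tfrac12|u|^2$ in the fiber-linear variable $h_i$ is exactly $H=\tfrac12\sum_i h_i^2$.
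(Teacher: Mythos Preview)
Your proposal is correct and follows precisely the endpoint-map/Lagrange-multiplier route that the paper invokes by citing \cite{pontrybook} and \cite{nostrolibro}; the paper does not supply its own proof of this statement but defers to those references, and your argument is the standard one found there. The only point worth tightening is the phrasing of the Lagrange step: rather than a dichotomy on surjectivity of $d_uE_{x_0}$, the cleaner formulation is that there exists a nonzero pair $(\nu,\lambda_T)\in\R_{\geq 0}\times T^*_{x_1}M$ with $\nu\, dJ_u=\lambda_T\circ d_uE_{x_0}$, from which case (i) or (ii) follows according to whether $\nu>0$ or $\nu=0$; this avoids having to argue separately that surjectivity forces the normal multiplier to exist.
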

If $\lambda:[0,T] \to M$ is a solution of (i) (resp. (ii)) it is called a \emph{normal} (resp. \emph{abnormal}) \emph{extremal}). It is well known that if $\lambda(t)$ is a normal extremal, then its projection $\gamma(t):=\pi(\lambda(t))$ is a smooth geodesic. This does not hold in general for abnormal extremals. On the other hand, a geodesic can be at the same time normal and abnormal, namely it admits distinct extremals, satisfying (i) and (ii). In the Riemannian setting there are no abnormal extremals.

\begin{definition}
A geodesic $\gamma:[0,T]\to M$ is \emph{strictly normal} if it is not abnormal. It is called \emph{strongly normal} if for every $t \in (0,T]$, the segment $\gamma|_{[0,t]}$ is not abnormal.
\end{definition}

Notice that extremals satisfying (i) are simply integral lines of the Hamiltonian field $\vec{H}$. Thus, let $\lambda(t)=e^{t\vec{H}}(\lambda_0)$ denote the integral line of $\vec{H}$, with initial condition $\lambda(0) = \lambda_0$. The sub-Riemannian \emph{exponential map} starting from $x_{0}$ is
\begin{equation}\label{eq:expmap}
\EXP_{x_{0}}: T_{x_0}^*M \to M, \qquad \EXP_{x_{0}}(\lambda_{0}):= \pi(e^{\vec{H}}(\lambda_{0})).
\end{equation}
Unit speed normal geodesics correspond to initial covectors $\lambda_{0}\in T^*M$ such that $H(\lambda) = 1/2$. 

\subsection{Geodesic flag and Young diagram}\label{s:gfyd}

In this section we introduce a set of invariants of a sub-Riemannian geodesic, namely the geodesic flag, and a useful graphical representation of the latter: the Young diagram. The concept of Young diagram in this setting appeared for the first time in \cite{lizel}, as a fundamental invariant for curves in the Lagrange Grassmanian. 
The proof that the original definition in \cite{lizel} is equivalent to the forthcoming one can be found in \cite[Sec. 6]{curvature}, in the general setting of affine control systems.

Let $\gamma(t)$ be a normal sub-Riemannian geodesic. By definition $\dot{\gamma}(t) \in \distr_{\g(t)}$ for all times. Consider a smooth horizontal extension of the tangent vector, namely an horizontal vector field $\tanf \in \VecH$ such that $\tanf|_{\gamma(t)} = \dot{\gamma}(t)$.
\begin{definition}\label{d:flag}
The \emph{flag of the geodesic} $\gamma(t)$ is the sequence of subspaces
\begin{equation}
\DD_\gamma^i(t) :=  \spn\{\mc{L}_\tanf^j (X)|_{\gamma(t)} \mid  X \in \VecH,\, j \leq i-1\} \subseteq T_{\gamma(t)} M, \qquad i \geq 1,
\end{equation}
where $\mc{L}_{\tanf}$ denotes the Lie derivative in the direction of $\tanf$.
\end{definition}
By definition, this is a filtration of $T_{\gamma(t)}M$, i.e. $\DD_\gamma^i(t) \subseteq \DD_\gamma^{i+1}(t)$, for all $i \geq 1$. Moreover, $\DD_\gamma^1(t) = \distr_{\gamma(t)}$. Definition~\ref{d:flag} is well posed, namely does not depend on the choice of the horizontal extension $\tanf$ (see \cite[Sec. 3.4]{curvature}).

For each time $t$, the flag of the geodesic contains information about how new directions can be obtained by taking the Lie derivative in the direction of the geodesic itself. In this sense it carries information about the germ of the distribution along the given trajectory, and is the microlocal analogue of the flag of the distribution.

\begin{definition}\label{d:growth}
The \emph{growth vector} of the geodesic $\gamma(t)$ is the sequence of integer numbers 
\begin{equation}
\mathcal{G}_\gamma(t) := \{\dim \DD_\gamma^1(t),\dim \DD_\gamma^2(t),\ldots\}.
\end{equation}
\end{definition}
Notice that, by definition, $\dim \DD_\gamma^1(t) = \dim \distr_{\gamma(t)} =k$.
\begin{definition}\label{d:ampleq}
Let $\gamma(t)$ be a normal sub-Riemannian geodesic, with growth vector $\mathcal{G}_\gamma(t)$. We say that the geodesic is:
\begin{itemize}
\item \emph{equiregular} if $\dim \DD_\gamma^i(t)$ does not depend on $t$ for all $i \geq 1$,
\item \emph{ample} if for all $t$ there exists $m \geq 1$ such that $\dim \DD_\gamma^{m}(t) = \dim T_{\gamma(t)}M$.
\end{itemize}
\end{definition}
We stress that equiregular (resp. ample) geodesics are the microlocal counterpart of equiregular (resp. bracket-generating) distributions. Let $d_i:= \dim \DD_\gamma^i - \dim \DD_\gamma^{i-1}$, for $i\geq 1$ be the increment of dimension of the flag of the geodesic at each step (with the convention $k_0:=0$).
\begin{lemma}\label{l:decreasing}
For an equiregular, ample geodesic, $d_1 \geq d_2 \geq \ldots \geq d_m$.
\end{lemma}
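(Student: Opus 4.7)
The plan is to exhibit, for each $1 \le i \le m-1$, a surjective $\R$-linear map
\[
\Phi_i : \DD_\gamma^i(t_0)/\DD_\gamma^{i-1}(t_0) \longrightarrow \DD_\gamma^{i+1}(t_0)/\DD_\gamma^i(t_0),
\]
from which $d_i \ge d_{i+1}$ follows by dimension count.

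The first ingredient is a Lie-derivative-type operator $\nabla_t$ acting on sections of $\gamma^*TM$: given such a section $V$, extend it arbitrarily to a vector field $\tilde V \in \VecM$ with $\tilde V|_\gamma = V$ and set $\nabla_t V := \mc{L}_\tanf \tilde V|_\gamma$. A short computation in local coordinates (using $[\tanf, W] f = \tanf(Wf) - W(\tanf f)$ together with the fact that $Wf$ vanishes identically on $\gamma$ whenever $W$ does) shows that $\nabla_t V$ is independent of the extension. With a local horizontal frame $X_1,\dots,X_k$ and the Leibniz identity $\mc{L}_\tanf(a\,\mc{L}_\tanf^j X_s) = (\tanf a)\mc{L}_\tanf^j X_s + a\,\mc{L}_\tanf^{j+1} X_s$, one verifies that $\nabla_t$ sends smooth sections of $\DD_\gamma^i$ into smooth sections of $\DD_\gamma^{i+1}$; equiregularity enters here to make $\DD_\gamma^i$ an honest smooth subbundle of $\gamma^*TM$, so that its sections admit such a horizontal-frame representation.

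I would then define $\Phi_i([v]) := [\nabla_t V(t_0)]$, where $V$ is any smooth section of $\DD_\gamma^i$ with $V(t_0) = v$. Well-definedness is the main technical step: pick a smooth local frame $E_1,\dots,E_{r_i}$ of $\DD_\gamma^i$ (available by equiregularity) and write $V = \sum_\alpha c_\alpha E_\alpha$; if $V(t_0) = 0$ then each $c_\alpha(t_0) = 0$, and the Leibniz rule $\nabla_t(fV) = \dot f\, V + f\, \nabla_t V$ gives
\[
\nabla_t V(t_0) = \sum_\alpha \dot c_\alpha(t_0)\, E_\alpha(t_0) \in \DD_\gamma^i(t_0),
\]
so $[\nabla_t V(t_0)] = 0$ in the target. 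Surjectivity is then immediate from the definition of $\DD_\gamma^{i+1}$: every class in $\DD_\gamma^{i+1}(t_0)/\DD_\gamma^i(t_0)$ is represented by a vector $\sum_s u_s\,\mc{L}_\tanf^i X_s|_{\gamma(t_0)}$ with $u_s \in \R$, which is precisely $\nabla_t V(t_0)$ for the explicit section $V(t) := \sum_s u_s\,\mc{L}_\tanf^{i-1} X_s|_{\gamma(t)}$ of $\DD_\gamma^i$.

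The main obstacle I anticipate is the book-keeping around two logically independent notions of ``extension'': extending a section along $\gamma$ to a vector field on a neighborhood in $M$, and extending a vector at $\gamma(t_0)$ to a smooth section of $\DD_\gamma^i$. Both potential ambiguities have to be killed — the first by the coordinate identity above, the second by writing $V$ in an $E_\alpha$-frame — and both steps rely crucially on equiregularity to make $\DD_\gamma^i$ a genuine smooth subbundle of $\gamma^*TM$; ampleness is then used only to ensure the sequence terminates at $i=m$.
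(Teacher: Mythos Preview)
Your argument is correct and follows exactly the paper's approach: the paper simply asserts that $\mc{L}_\tanf$ induces surjective linear maps $\DD_\gamma^i(t)/\DD_\gamma^{i-1}(t) \to \DD_\gamma^{i+1}(t)/\DD_\gamma^i(t)$ (citing \cite[Sec.~3.4]{curvature} for details), whereas you unpack precisely what this means and verify well-definedness and surjectivity carefully. One small point worth making explicit: to see that $\Phi_i$ factors through the quotient by $\DD_\gamma^{i-1}(t_0)$, apply your section-independence argument with $i$ replaced by $i-1$ (choosing $V$ a section of $\DD_\gamma^{i-1}$ when $v\in\DD_\gamma^{i-1}(t_0)$), so that $\nabla_t V(t_0)\in\DD_\gamma^i(t_0)$.
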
 
\begin{proof}
By the equiregularity assumption, the Lie derivative $\mc{L}_\tanf$ defines surjective linear maps
\begin{equation}
\mc{L}_\tanf: \DD_\gamma^i(t) / \DD_\gamma^{i-1}(t) \to  \DD_\gamma^{i+1}(t) / \DD_\gamma^{i}(t), \qquad \forall t, \quad i \geq 1,
\end{equation}
where we set $\DD_\gamma^0(t) = \{0\}$ (see also \cite[Sec. 3.4]{curvature}). The quotients $\DD_\gamma^i / \DD_\gamma^{i-1}$ have constant dimension $d_i:= \dim \DD_\gamma^i - \dim \DD_\gamma^{i-1}$. Therefore the sequence $d_1\geq d_2\geq \ldots \geq d_m$ is non-increasing.
\end{proof}
Notice that any ample geodesic is strongly normal, and for real-analytic sub-Riemannian structures also the converse is true (see \cite[Prop. 3.12]{curvature}). The generic geodesic is ample and equiregular. More precisely, the set of points $x \in M$ such that there a exists non-empty Zariski open set $A_{x} \subseteq T_{x}^*M$ of initial covectors for which the associated geodesic is ample and equiregular with the same (maximal) growth vector, is open and dense in $M$. For more details, see \cite[Sec. 5]{lizel} and \cite[Sec. 5.2]{curvature}.

\subsubsection*{Young diagram}
For an ample, equiregular geodesic, the sequence of dimension stabilises, namely $\dim\DD_\gamma^m = \dim\DD_\gamma^{m+j} = n$ for $j\geq 0$, and we write $\mathcal{G}_\gamma = \{\dim\DD_\gamma^1,\ldots,\dim\DD_\gamma^m\}$. Thus, we associate with any ample, equiregular geodesic its Young diagram as follows. Recall that $d_{i}=\dim\DD_\gamma^i-\dim\DD_\gamma^{i-1}$ defines a decreasing sequence by Lemma~\ref{l:decreasing}. Then we can build a tableau $\y$ with $m$ columns of length $d_{i}$, for $i=1,\ldots,m$, as follows:
\begin{center}
\begin{tikzpicture}[x=0.26mm, y=0.26mm, inner xsep=0pt, inner ysep=0pt, outer xsep=0pt, outer ysep=0pt]
\path[line width=0mm] (61.05,70.00) rectangle +(127.98,100.00);
\draw(120.00,159.00) node[anchor=base]{\fontsize{9.39}{11.27}\selectfont $\ldots$};
\draw(120.00,139.00) node[anchor=base]{\fontsize{9.39}{11.27}\selectfont $\ldots$};
\draw(80.00,115.00) node[anchor=base]{\fontsize{9.39}{11.27}\selectfont $\vdots$};
\draw(100.00,115.00) node[anchor=base]{\fontsize{9.39}{11.27}\selectfont $\vdots$};
\definecolor{L}{rgb}{0,0,0}
\path[line width=0.30mm, draw=L] (130.00,170.00) -- (130.00,130.00) -- (150.00,130.00) -- (150.00,150.00) -- (170.00,150.00) -- (170.00,170.00) -- cycle;
\path[line width=0.30mm, draw=L] (150.00,170.00) -- (150.00,150.00);
\path[line width=0.30mm, draw=L] (130.00,150.00) -- (150.00,150.00);
\draw(122.00,95.00) node[anchor=base west]{\fontsize{8.54}{10.24}\selectfont \# boxes = $d_i$};
\definecolor{F}{rgb}{0.565,0.933,0.565}
\path[line width=0.30mm, draw=L, fill=F] (90.00,170.00) [rotate around={270:(90.00,170.00)}] rectangle +(40.00,20.00);
\path[line width=0.30mm, draw=L] (90.00,170.00) -- (70.00,170.00) -- (70.00,130.00) -- (90.00,130.00);
\path[line width=0.30mm, draw=L] (70.00,150.00) -- (110.00,150.00);
\path[line width=0.30mm, draw=L, fill=F] (90.00,110.00) [rotate around={270:(90.00,110.00)}] rectangle +(20.00,20.00);
\path[line width=0.30mm, draw=L] (90.00,70.00) [rotate around={90:(90.00,70.00)}] rectangle +(20.00,20.00);
\path[line width=0.30mm, draw=L] (70.00,90.00) -- (70.00,110.00) -- (90.00,110.00);
\end{tikzpicture}%

\end{center}
Indeed $\sum_{i=1}^m d_i = n=\dim M$ is the total number of boxes in $\y$. Let us discuss some examples.
\begin{example}
For a Riemannian structure, the flag of any non-trivial geodesic consists in a single space $\DD^1_{\gamma}(t) = T_{\gamma(t)}M$. Therefore $\mathcal{G}_\gamma(t) = \{n\}$ and all the geodesics are ample and equiregular. Roughly speaking, all the directions have the same (trivial) behaviour w.r.t. the Lie derivative.
\end{example}
\begin{example}\label{ex:contact}
Consider a contact, sub-Riemannian manifold with $\dim M = 2n+1$, and a non-trivial geodesic $\gamma$ with tangent field $\tanf \in \VecH$. Let $X_1,\ldots,X_{2n}$ be a local frame in a neighbourhood of the geodesic and $X_0$ the Reeb vector field. Let $\omega$ be the contact form. We define the invertible bundle map $J:\distr \to \distr$ by $\langle X|JY\rangle = d\omega(X,Y)$, for $X,Y \in \VecH$. Finally, we split $\distr = J\tanf \oplus J\tanf^\perp$ along the geodesic $\gamma(t)$. We obtain
\begin{equation}
\mc{L}_\tanf(Y) =\langle J\tanf|Y\rangle X_0 \mod \VecH, \qquad \forall\,Y \in \VecH.
\end{equation}
Therefore, the Lie derivative of fields in $J\tanf^\perp$ does not generate ``new directions''. On the other hand, $\mc{L}_\tanf(J\tanf) = X_0$ up to elements in $\VecH$. In this sense, the subspaces $J\tanf$ and $J\tanf^\perp$ are different w.r.t. Lie derivative: the former generates new directions, the latter does not. In the Young diagram, the subspace $J\tanf^\perp$ corresponds to the rectangular sub-diagram $\y_2$, while the subspace $J\tanf\oplus X_0$ corresponds to the rectangular sub-diagram $\y_1$ in Fig.~\ref{f:Yd}.b.
\end{example}

\begin{figure}[!ht]
\centering
\begin{tikzpicture}[x=0.30mm, y=0.30mm, inner xsep=0pt, inner ysep=0pt, outer xsep=0pt, outer ysep=0pt]
\definecolor{L}{rgb}{0,0,0}
\definecolor{F}{rgb}{0.565,0.933,0.565}
\path[line width=0.15mm, draw=L, fill=F] (-53.50,91.50) rectangle +(17.92,70.94);
\path[line width=0.15mm, draw=L] (-53.30,127.00) -- (-35.58,127.00);
\path[line width=0.15mm, draw=L] (-53.30,109.29) -- (-35.58,109.29);
\path[line width=0.15mm, draw=L] (-53.30,91.57) -- (-35.58,91.57);
\path[line width=0.15mm, draw=L] (-53.30,144.72) -- (-35.58,144.72);
\path[line width=0.15mm, draw=L, fill=F] (88.32,91.67) rectangle +(17.82,70.76);
\definecolor{F}{rgb}{1,0,0}
\path[line width=0.15mm, draw=L, fill=F] (88.43,144.72) rectangle +(35.43,17.72);
\path[line width=0.15mm, draw=L] (106.14,162.44) -- (106.14,144.72);
\path[line width=0.15mm, draw=L] (88.43,127.00) -- (106.14,127.00);
\path[line width=0.15mm, draw=L] (88.43,109.29) -- (106.14,109.29);
\path[line width=0.15mm, draw=L] (88.43,91.57) -- (106.14,91.57);
\definecolor{F}{rgb}{0.686,0.933,0.933}
\path[line width=0.15mm, draw=L, fill=F] (247.87,91.55) [rotate around={0:(247.87,91.55)}] rectangle +(17.71,17.73);
\definecolor{F}{rgb}{0.565,0.933,0.565}
\path[line width=0.15mm, draw=L, fill=F] (247.87,109.29) rectangle +(35.43,35.43);
\definecolor{F}{rgb}{1,0,0}
\path[line width=0.15mm, draw=L, fill=F] (247.87,144.72) rectangle +(70.86,17.72);
\path[line width=0.15mm, draw=L] (265.58,162.44) -- (265.58,109.29);
\path[line width=0.15mm, draw=L] (247.87,127.00) -- (283.30,127.00);
\path[line width=0.15mm, draw=L] (247.87,91.57) -- (265.58,91.57);
\path[line width=0.15mm, draw=L] (283.30,162.44) -- (283.30,144.72);
\path[line width=0.15mm, draw=L] (301.01,162.44) -- (301.01,144.72);
\path[line width=0.15mm, draw=L] (128.00,162.00) -- (133.00,162.00) -- (133.00,145.00) -- (128.00,145.00);
\path[line width=0.15mm, draw=L] (128.00,145.00) -- (133.00,145.00) -- (133.00,92.00) -- (128.00,92.00);
\draw(139.00,150.00) node[anchor=base west]{\fontsize{9.39}{11.27}\selectfont $\y_1$};
\draw(139.00,116.00) node[anchor=base west]{\fontsize{9.39}{11.27}\selectfont $\y_2$};
\draw(68.00,125.00) node[anchor=base west]{\fontsize{8.54}{10.24}\selectfont (b)};
\draw(227.00,125.00) node[anchor=base west]{\fontsize{8.54}{10.24}\selectfont (c)};
\draw(-75.00,124.00) node[anchor=base west]{\fontsize{8.54}{10.24}\selectfont (a)};
\end{tikzpicture}%
\caption{Young diagrams for (a) Riemannian, (b) contact, (c) a more general structure.}\label{f:Yd}
\end{figure}
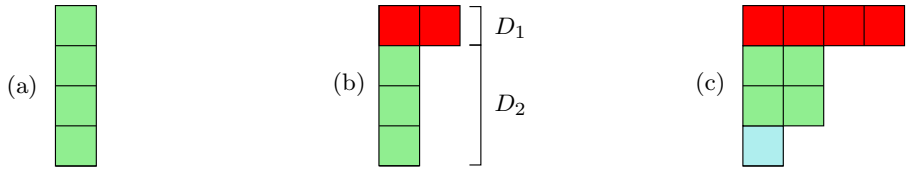

See Fig.~\ref{f:Yd} for some examples of Young diagrams. The number of boxes in the $i$-th row (i.e. $d_i$) is the number of new independent directions in $T_{\gamma(t)}M$ obtained by taking $(i-1)$-th Lie derivatives in the direction of $\tanf$.
\section{Jacobi fields revisited: conjugate points and Riccati equation}\label{s:Jac}

Let $\lambda \in T^*M$ be the covector associated with a strongly normal geodesic, projection of the extremal $\lambda(t) = e^{t\vec{H}}(\lambda)$. For any $\xi \in T_\lambda(T^*M)$ we define the field along the extremal $\lambda(t)$ as 
\begin{equation}
X(t):= e^{t\vec{H}}_* \xi \in T_{\lambda(t)}(T^*M).
\end{equation}
The set of vector fields obtained in this way is a $2n$-dimensional vector space, that we call \emph{the space of Jacobi fields along the extremal}. In the Riemannian case, the projection $\pi_*$ is an isomorphisms between the space of Jacobi fields along the extremal and the classical space of Jacobi fields along the geodesic $\gamma$. Thus, this definition is equivalent to the standard one in Riemannian geometry, does not need curvature or connection, and works for any strongly normal sub-Riemannian geodesic.

In Riemannian geometry, the study of one half of such a vector space, namely the subspace of classical Jacobi fields vanishing at zero, carries information about conjugate points along the given geodesic. By the aforementioned isomorphism, this corresponds to the subspace of Jacobi fields along the extremal such that $\pi_* X(0) = 0$. This motivates the following construction.

For any $\lambda\in T^*M$, let $\ve_{\lambda}:= \ker \pi_*|_\lambda \subset T_{\lambda}(T^*M)$ be the \emph{vertical subspace}. We define the family of Lagrangian subspaces along the extremal
\begin{equation}
\mc{L}(t):= e^{t\vec{H}}_* \ve_\lambda \subset T_{\lambda(t)}(T^*M).
\end{equation}

\begin{definition}
A time $t>0$ is a \emph{conjugate time} for $\gamma$ if $\mc{L}(t) \cap \ve_{\lambda(t)} \neq \{0\}$. Equivalently, we say that $\gamma(t) = \pi(\lambda(t))$ is a conjugate point w.r.t. $\gamma(0)$ along $\gamma(t)$. The \emph{first conjugate time} is the smallest conjugate time, namely $t_c(\gamma) = \inf\{t>0 \mid \mc{L}(t) \cap \ve_{\lambda(t)} \neq \{0\}\}$.
\end{definition}
Since the geodesic is strongly normal, the first conjugate time is separated from zero, namely there exists $\eps >0$ such that $\mc{L}(t) \cap \ve_{\lambda(t)} = \{0\}$ for all $t \in(0,\eps)$.
Notice that conjugate points correspond to the critical values of the sub-Riemannian exponential map with base in $\gamma(0)$. In other words, if $\gamma(t)$ is conjugate with $\gamma(0)$ along $\gamma$, there exists a one-parameter family of geodesics starting at $\gamma(0)$ and ending at $\gamma(t)$ at first order. Indeed, let $\xi \in \ve_\lambda$ such that $\pi_* \circ e^{t\vec{H}}_* \xi = 0$, then the vector field $\tau \mapsto \pi_* \circ e^{\tau\vec{H}}_* \xi$ is a classical Jacobi field along $\gamma$ which vanishes at the endpoints, and this is precisely the vector field of the aforementioned variation. 

In Riemannian geometry geodesics stop to be minimizing after the first conjugate time. This remains true for strongly normal sub-Riemannian geodesics (see, for instance, \cite{nostrolibro}). 


\subsection{Riemannian interlude}

In this section, we recall the concept of parallely transported frame along a geodesic in Riemannian geometry, and we give an equivalent characterisation in terms of a Darboux moving frame along the corresponding extremal lift.
Let $(M,\langle\cdot|\cdot\rangle)$ be a Riemannian manifold, endowed with the Levi-Civita connection $\nabla : \VecM \to \VecM$. In terms of a local orthonormal frame
\begin{equation}
\nabla_{X_j} X_i = \sum_{k=1}^n \Gamma_{ij}^k X_k, \qquad \Gamma_{ij}^k = \frac{1}{2}\left(c_{ij}^k + c_{ki}^j + c_{kj}^i\right),
\end{equation}
where $\Gamma_{ij}^k \in C^\infty(M)$ are the Christoffel symbols written in terms of the orthonormal frame. Notice that $\Gamma_{ij}^k = -\Gamma_{ik}^j$. 

Let $\gamma(t)$ be a geodesic and $\lambda(t)$ be the associated (normal) extremal, such that $\dot{\lambda}(t) = \vec{H}|_{\lambda(t)}$ and $\gamma(t) = \pi\circ\lambda(t)$. Let $\{X_1,\ldots,X_n\}$ a parallely transported frame along the geodesic $\gamma(t)$, i.e. $\nabla_{\dot{\gamma}} X_i = 0$.  Let $h_i:T^*M \to \R$ be the linear-on-fibers functions associated with $X_i$, defined by $h_i(\lambda):= \langle \lambda, X_i\rangle$. We define the (vertical) fields $\partial_{h_i}\in \text{Vec}(T^*M)$ such that $\partial_{h_i} (\pi^* g) = 0$, and $\partial_{h_i} (h_j) = \delta_{ij}$ for any $g \in C^\infty(M)$ and $i,j=1,\ldots,n$. We define a moving frame along the extremal $\lambda(t)$ as follows
\begin{equation}
E_i:=\partial_{h_i}, \qquad F_i := -[\vec{H},E_i],
\end{equation}
where the frame is understood to be evaluated at $\lambda(t)$. Notice that we can recover the parallely transported frame by projection, namely $\pi_* F_i|_{\lambda(t)} = X_i|_{\g(t)}$ for all $i$. In the following, for any vector field $Z$ along an extremal $\lambda(t)$ we employ the shorthand
\begin{equation}
\dot{Z}|_{\lambda(t)} := \left.\frac{d}{d\eps}\right|_{\eps = 0}e^{-\eps\vec{H}}_* Z|_{\lambda(t+\eps)} = [\vec{H},Z]|_{\lambda(t)}
\end{equation}
to denote the vector field along $\lambda(t)$ obtained by taking the Lie derivative in the direction of $\vec{H}$ of any smooth extension of $Z$. Notice that this is well defined, namely its value at $\lambda(t)$ does not depend on the choice of the extension. We state the properties of the moving frame in the following proposition.
\begin{proposition}\label{p:riemcan}
The smooth moving frame $\{E_i,F_i\}_{i=1}^n$ has the following properties:
\begin{itemize}
\item[(i)] $\spn\{E_i|_{\lambda(t)}\} = \ve_{\lambda(t)}$.
\item[(ii)] It is a Darboux basis, namely
\begin{equation}
\sigma(E_i,E_j) = \sigma(F_i,F_j) = \sigma(E_i,F_j) - \delta_{ij} = 0, \qquad i,j=1,\ldots,n.
\end{equation}
\item[(iii)] The frame satisfies structural equations
\begin{equation}
\dot{E}_i = - F_i, \qquad \dot{F}_i = \sum_{j=1}^n R_{ij}(t) E_j,
\end{equation}
for some smooth family of $n\times n$ symmetric matrices $R(t)$.
\end{itemize}
Properties (i)-(iii) uniquely define the moving frame up to orthogonal transformations. More precisely if $\{\wt{E}_i,\wt{F}_j\}_{i=1}^n$ is another smooth moving frame along $\lambda(t)$ satisfying (i)-(iii), with some matrix $\wt{R}(t)$ then there exist a constant, orthogonal matrix $O$ such that 
\begin{equation}\label{eq:orthonormal}
\wt{E}_i|_{\lambda(t)} = \sum_{j=1}^n O_{ij}E_j|_{\lambda(t)}, \qquad  \wt{F}_i|_{\lambda(t)} = \sum_{j=1}^nO_{ij}F_j|_{\lambda(t)}, \qquad \wt{R}(t) = O R(t) O^*. 
\end{equation}
\end{proposition}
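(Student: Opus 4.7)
The plan is to verify the three listed properties in sequence, then establish uniqueness. The key technical tool throughout is the invariance of the symplectic form under the Hamiltonian flow, $\mathcal{L}_{\vec{H}}\sigma=0$, which along the extremal $\lambda(t)$ yields $\frac{d}{dt}\sigma(X,Y)=\sigma(\dot X,Y)+\sigma(X,\dot Y)$ for any pair of vector fields $X,Y$ along $\lambda$. Property (i) is essentially definitional: the relations $\partial_{h_i}(\pi^{*}g)=0$ place $E_i=\partial_{h_i}$ in the vertical, while $\partial_{h_i}(h_j)=\delta_{ij}$ makes them linearly independent, hence a basis of the $n$-dimensional $\ve_{\lambda(t)}$. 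The identity $\dot E_i=-F_i$ is immediate from the definition of $F_i$.

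For the Darboux relations in (ii), $\sigma(E_i,E_j)=0$ holds because $\ve$ is Lagrangian. I would then expand
\[ F_j = -\bigl[\sum_k h_k\vec h_k,\,\partial_{h_j}\bigr] = \vec h_j - \sum_k h_k\,[\vec h_k,\partial_{h_j}], \]
and check on test functions $\pi^{*}g$ and $h_l$ that each $[\vec h_k,\partial_{h_j}]$ is vertical; explicitly, $[\vec h_k,\partial_{h_j}]=-\sum_l c_{kl}^{j}\partial_{h_l}$. Thus $F_j=\vec h_j+V_j$ with $V_j$ vertical, and $\sigma(E_i,F_j)=\sigma(\partial_{h_i},\vec h_j)=dh_j(\partial_{h_i})=\delta_{ij}$, the vertical part contributing nothing. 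The remaining relation $\sigma(F_i,F_j)=0$ is the computational crux: expanding via $\{h_i,h_j\}=\sum_k c_{ij}^{k} h_k$ and the explicit form of $V_j$, the whole expression reduces to $\sum_k h_k(c_{ij}^{k}+c_{kj}^{i}-c_{ki}^{j})=0$, which is exactly the parallel-transport condition $\nabla_{\dot\gamma}X_i=0$ re-expressed via the orthonormal-frame formula $\Gamma_{ij}^{k}=\frac{1}{2}(c_{ij}^{k}+c_{ki}^{j}+c_{kj}^{i})$ for the Levi-Civita connection.

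For the second structural equation in (iii), the verticality of $\dot F_i$ follows because the horizontal part of $[\vec H,F_i]$ projects under $\pi_{*}$ to $\nabla_{\dot\gamma}X_i=0$. Symmetry of $R(t)$ then comes from differentiating $\sigma(F_i,F_j)=0$ along $\lambda(t)$: the invariance identity gives $\sigma(\dot F_i,F_j)+\sigma(F_i,\dot F_j)=0$, and substituting $\dot F_i=\sum_k R_{ik}E_k$ together with the already-established $\sigma(E_k,F_j)=\delta_{kj}$ produces $R_{ij}=R_{ji}$.

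For uniqueness, write $\tilde E_i=\sum_j O_{ij}(t)\,E_j$, which is possible since both frames span $\ve$. Applying $\dot{\tilde E}_i=-\tilde F_i$ gives $\tilde F_i=\sum_j O_{ij}F_j-\sum_j\dot O_{ij}E_j$. The Darboux condition $\sigma(\tilde E_i,\tilde F_j)=\delta_{ij}$ then forces $OO^{*}=I$, and $\sigma(\tilde F_i,\tilde F_j)=0$ forces $\dot O\,O^{*}=0$, hence $\dot O=0$: $O$ is a constant orthogonal matrix. The transformation law $\tilde R=O R O^{*}$ is then immediate by substituting into $\dot{\tilde F}_i=\sum_j\tilde R_{ij}\tilde E_j$. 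The main obstacle in the whole argument is the verification of $\sigma(F_i,F_j)=0$: this is the unique step requiring the specific Riemannian structure (via the formula for $\Gamma_{ij}^{k}$) together with parallel transport, whereas every other step is purely symplectic-geometric and can be expected to transfer, in a suitably modified form, to the sub-Riemannian setting developed later in the paper.
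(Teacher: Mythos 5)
Your proposal is correct and, for part (i), the Darboux relations, and the uniqueness statement, it follows essentially the same route as the paper: an explicit computation in a lifted frame on $T^*M$, using that $\ve_{\lambda}$ is Lagrangian, that $\sigma(\partial_{h_i},\vec{h}_j)=dh_j(\partial_{h_i})=\delta_{ij}$, and that the vanishing of $\sigma(F_i,F_j)$ is precisely the parallel-transport condition written through the structure constants (the paper organizes the same computation via the coordinate lifts $\wt{X}_i$ instead of the fields $\vec{h}_i$, and attributes the final cancellation to torsion-freeness after having already inserted metric compatibility and parallel transport into its expression for $F_i$, but the content is the same; your identity $\sum_k h_k(c_{ij}^k+c_{kj}^i-c_{ki}^j)=0$ is indeed equivalent to $\langle\nabla_{\dot\gamma}X_i|X_j\rangle=0$). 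Where you genuinely diverge is part (iii): the paper computes $\dot F_i=[\vec{H},F_i]$ explicitly, obtaining simultaneously its verticality and the formula $R_{ij}=\langle R^\nabla(X_i,\dot\gamma)\dot\gamma|X_j\rangle$ (hence symmetry), which it needs anyway for Lemma~\ref{l:Riemanncurv}; you instead get the symmetry of $R(t)$ abstractly from $\mathcal{L}_{\vec{H}}\sigma=0$ by differentiating $\sigma(F_i,F_j)=0$. That is a clean shortcut for the proposition as stated, at the price of not identifying $R(t)$ with the curvature tensor. The only soft spot is your one-line justification of the verticality of $\dot F_i$ (``the horizontal part of $[\vec{H},F_i]$ projects to $\nabla_{\dot\gamma}X_i=0$''): this is true, but it is the one step you assert rather than verify, and checking it directly requires a computation of the same kind as your $\sigma(F_i,F_j)$ one. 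Note, however, that in your own framework it comes for free: differentiating $\sigma(F_i,E_j)=-\delta_{ij}$ along the flow and using $\dot E_j=-F_j$ together with $\sigma(F_i,F_j)=0$ gives $\sigma(\dot F_i,E_j)=0$ for all $j$, and since $\ve_{\lambda(t)}=\spn\{E_j\}$ is Lagrangian this already forces $\dot F_i$ to be vertical, after which $R_{ij}=\sigma(\dot F_i,F_j)$ is smooth and symmetric by your invariance argument.
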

A few remarks are in order. Property (ii) implies that $\spn\{E_1,\ldots,E_n\}$, $\spn\{F_1,\ldots,F_n\}$, evaluated at $\lambda(t)$, are Lagrangian subspaces of $T_{\lambda(t)}(T^*M)$. Eq.~\eqref{eq:orthonormal} reflects the fact that a parallely transported frame is defined up to constant orthogonal transformations. In particular, one could use properties (i)-(iii) to \emph{define} the parallel transport along $\gamma(t)$ by $X_i|_{\gamma(t)}:=\pi_* F_i|_{\lambda(t)}$. Finally, the symmetric matrix $R(t)$ induces a well defined quadratic form $\mathfrak{R}_{\gamma(t)}:T_{\gamma(t)}M \to \R$
\begin{equation}\label{eq:defff}
\RR_{\gamma(t)}(v) := \sum_{i,j=1}^n R_{ij}(t) v_i v_j, \qquad v = \sum_{i=1}^n v_i X_i|_{\gamma(t)} \in T_{\gamma(t)}M.
\end{equation}
Indeed Proposition~\ref{p:riemcan} implies that the definition of $\RR_{\gamma(t)}$ does not depend on the choice of the parallely transported frame.

\begin{lemma}\label{l:Riemanncurv}
Let $R^\nabla : \VecM \times \VecM \times \VecM \to \VecM$ the Riemannian curvature tensor w.r.t. the Levi-Civita connection. Then
\begin{equation}
\RR_{\gamma}(v) = \langle R^\nabla(v,\dot{\gamma})\dot{\gamma}|v\rangle, \qquad v \in T_{\gamma}M,
\end{equation}
where we suppressed the explicit dependence on time. 
\end{lemma}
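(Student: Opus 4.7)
The plan is to derive an explicit formula for the matrix $R(t)$ of Proposition~\ref{p:riemcan} in the parallely transported frame $\{X_1,\dots,X_n\}$, and show that it coincides with the matrix of the Riemannian curvature operator $v\mapsto R^\nabla(v,\dot{\g})\dot{\g}$ in the same frame. The key observation is that fields of the form $X(t)=e^{t\vec H}_*\xi$ satisfy $\dot X\equiv 0$ (with the shorthand $\dot Z=[\vec H,Z]$ used in the paper), and that their projections $J(t)=\pi_* X(t)$ are exactly the classical Jacobi fields along $\g$. The proof amounts to writing the Jacobi equation twice -- in the Darboux frame along $\lambda(t)$ and in the parallel frame along $\g(t)$ -- and comparing the two resulting systems.

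First I would decompose an arbitrary such $X(t)$ in the Darboux moving frame as $X(t)=\sum_i(\alpha_i(t)E_i+\beta_i(t)F_i)$ and impose $\dot X=0$. Using the structural equations $\dot E_i=-F_i$ and $\dot F_i=\sum_j R_{ij}(t)E_j$ of Proposition~\ref{p:riemcan}(iii), collecting the $E$- and $F$-components immediately gives the first-order system $\dot\beta_i=\alpha_i$, $\dot\alpha_i=-\sum_j R_{ij}(t)\beta_j$, hence the second-order matrix ODE $\ddot\beta+R(t)\beta=0$.

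Next I would push down to $M$. Since $\pi_* E_i=0$ and $\pi_* F_i|_{\lambda(t)}=X_i|_{\g(t)}$ is the parallely transported frame, the projected field is $J(t)=\sum_i\beta_i(t)X_i|_{\g(t)}$. The parallelism $\nabla_{\dot\g}X_i=0$ implies $\nabla_{\dot\g}J=\sum_i\dot\beta_iX_i$ and $\nabla_{\dot\g}^2J=\sum_i\ddot\beta_iX_i$, so pairing the classical Jacobi equation $\nabla_{\dot\g}^2J+R^\nabla(J,\dot\g)\dot\g=0$ with $X_j$ yields
\begin{equation*}
\ddot\beta_j+\sum_i\la R^\nabla(X_i,\dot\g)\dot\g\,|\,X_j\ra\,\beta_i=0.
\end{equation*}
The map $\xi\mapsto(\beta(0),\dot\beta(0))$ is a linear isomorphism from $T_\lambda(T^*M)$ onto the space of initial data for the Jacobi equation, so both matrix ODEs must govern the same family of solutions. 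Hence $R_{ij}(t)=\la R^\nabla(X_j,\dot\g)\dot\g\,|\,X_i\ra$, and by the symmetry $\la R^\nabla(X,Y)Z\,|\,W\ra=\la R^\nabla(Z,W)X\,|\,Y\ra$ of the Riemann tensor one concludes, for $v=\sum_i v_iX_i$,
\begin{equation*}
\RR_\g(v)=\sum_{i,j}R_{ij}v_iv_j=\la R^\nabla(v,\dot\g)\dot\g\,|\,v\ra.
\end{equation*}

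The only genuinely delicate step is the identification $\pi_* F_i=X_i$ with $X_i$ \emph{parallely transported}, i.e.\ that the horizontal half of the canonical Darboux frame on $T^*M$ projects precisely to Levi-Civita parallel transport along $\g$. This is asserted in Proposition~\ref{p:riemcan}, so the remaining work in the lemma is routine bookkeeping. Everything else reduces to the comparison of two linear ODE systems satisfied by the same Jacobi fields, which fixes $R(t)$ uniquely.
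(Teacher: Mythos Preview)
Your argument is correct, and it takes a genuinely different route from the paper. In Appendix~\ref{a:riemcan} the paper proves Proposition~\ref{p:riemcan} and Lemma~\ref{l:Riemanncurv} simultaneously by a direct computation: it writes $\vec H$ explicitly in a coordinate lifted frame on $T^*M$, computes $F_i=-[\vec H,\partial_{h_i}]$ in terms of Christoffel symbols, and then evaluates $\dot F_i=[\vec H,F_i]$ to read off $R_{ij}(t)=\langle R^\nabla(X_i,\dot\g)\dot\g\,|\,X_j\rangle$ directly. Your approach instead treats Proposition~\ref{p:riemcan} as a black box and identifies $R(t)$ by matching two linear ODEs satisfied by the same $2n$-dimensional family of Jacobi fields: the symplectic form $\ddot\beta+R(t)\beta=0$ coming from the structural equations, and the classical Jacobi equation coming from the Levi-Civita connection. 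The paper's route is computational and self-contained (it establishes the proposition and the lemma in one stroke), while yours is more conceptual and coordinate-free in spirit, but it does lean on the statement (made at the start of Section~\ref{s:Jac}) that $\pi_*$ identifies symplectic and classical Jacobi fields in the Riemannian case. Both are valid; yours has the advantage of making transparent \emph{why} $R(t)$ must be the directional curvature, without Christoffel-symbol manipulations.
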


In other words, for any unit vector $v \in T_{\gamma}M$, $\RR_{\gamma}(v) = \Sec(v,\dot{\gamma})$ is the sectional curvature of the plane generated by $v$ and $\dot{\gamma}$, i.e. the \emph{directional curvature} in the direction of the geodesic.
The proof of Proposition~\ref{p:riemcan} and Lemma~\ref{l:Riemanncurv} can be found in Appendix~\ref{a:riemcan}.

\subsection{Canonical frame}\label{s:can}

The concept of Levi-Civita connection and covariant derivative is not available for general sub-Riemannian structures, and it is not clear how to parallely transport a frame along a sub-Riemannian geodesic. Nevertheless, in \cite{lizel}, the authors introduce a parallely transported frame along the corresponding extremal $\lambda(t)$ which, in the spirit of Proposition~\ref{p:riemcan}, generalises the concept of parallel transport also to (sufficiently regular) sub-Riemannian extremals.

Consider an ample, equiregular geodesic, with Young diagram $\y$, with $k$ rows, of length $n_1,\ldots,n_k$. Indeed $n_1+\ldots+n_k = n$. The moving frame we are going to introduce is indexed by the boxes of the Young diagram, so we fix some terminology first. Each box is labelled ``$ai$'', where $a=1,\ldots,k$ is the row index, and $i=1,\ldots,n_a$ is the progressive box number, starting from the left, in the specified row. Briefly, the notation $ai \in \y$ denotes the generic box of the diagram. We employ letters from the beginning of the alphabet $a,b,c,\dots$ for rows, and letters from the middle of the alphabet $i,j,h,\dots$ for the position of the box in the row.  

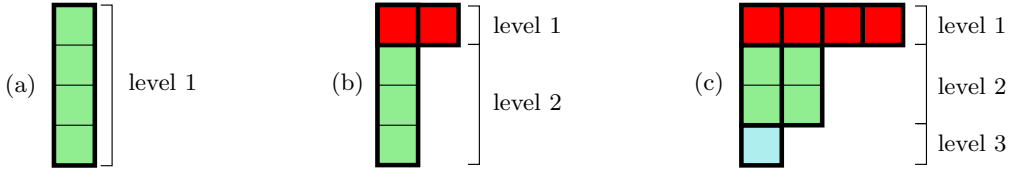
\begin{figure}[h]
\centering
\begin{tikzpicture}[x=0.30mm, y=0.30mm, inner xsep=0pt, inner ysep=0pt, outer xsep=0pt, outer ysep=0pt]
\path[line width=0mm] (-75.00,91.40) rectangle +(300.63,71.04);
\definecolor{L}{rgb}{0,0,0}
\definecolor{F}{rgb}{0.565,0.933,0.565}
\path[line width=0.60mm, draw=L, fill=F] (-53.50,91.50) rectangle +(17.92,70.94);
\path[line width=0.15mm, draw=L] (-53.30,127.00) -- (-35.58,127.00);
\path[line width=0.15mm, draw=L] (-53.30,109.29) -- (-35.58,109.29);
\path[line width=0.15mm, draw=L] (-53.30,91.57) -- (-35.58,91.57);
\path[line width=0.15mm, draw=L] (-53.30,144.72) -- (-35.58,144.72);
\path[line width=0.60mm, draw=L, fill=F] (88.32,91.67) rectangle +(17.82,70.76);
\definecolor{F}{rgb}{1,0,0}
\path[line width=0.60mm, draw=L, fill=F] (88.43,144.72) rectangle +(35.43,17.72);
\path[line width=0.60mm, draw=L] (106.14,162.44) -- (106.14,144.72);
\path[line width=0.15mm, draw=L] (88.43,127.00) -- (106.14,127.00);
\path[line width=0.15mm, draw=L] (88.43,109.29) -- (106.14,109.29);
\path[line width=0.15mm, draw=L] (88.43,91.57) -- (106.14,91.57);
\definecolor{F}{rgb}{0.686,0.933,0.933}
\path[line width=0.60mm, draw=L, fill=F] (247.87,91.55) [rotate around={0:(247.87,91.55)}] rectangle +(17.71,17.73);
\definecolor{F}{rgb}{0.565,0.933,0.565}
\path[line width=0.60mm, draw=L, fill=F] (247.87,109.29) rectangle +(35.43,35.43);
\definecolor{F}{rgb}{1,0,0}
\path[line width=0.60mm, draw=L, fill=F] (247.87,144.72) rectangle +(70.86,17.72);
\path[line width=0.60mm, draw=L] (265.58,162.44) -- (265.58,109.29);
\path[line width=0.15mm, draw=L] (247.87,127.00) -- (283.30,127.00);
\path[line width=0.15mm, draw=L] (247.87,91.57) -- (265.58,91.57);
\path[line width=0.60mm, draw=L] (283.30,162.44) -- (283.30,144.72);
\path[line width=0.60mm, draw=L] (301.01,162.44) -- (301.01,144.72);
\draw(-20.85,124.71) node[anchor=base west]{\fontsize{9.39}{11.27}\selectfont level 1};
\path[line width=0.15mm, draw=L] (128.00,162.00) -- (133.00,162.00) -- (133.00,145.00) -- (128.00,145.00);
\path[line width=0.15mm, draw=L] (128.00,145.00) -- (133.00,145.00) -- (133.00,92.00) -- (128.00,92.00);
\draw(139.00,150.00) node[anchor=base west]{\fontsize{9.39}{11.27}\selectfont level 1};
\draw(139.00,116.00) node[anchor=base west]{\fontsize{9.39}{11.27}\selectfont level 2};
\path[line width=0.15mm, draw=L] (324.00,162.00) -- (329.00,162.00) -- (329.00,145.00) -- (324.00,145.00);
\draw(334.00,150.00) node[anchor=base west]{\fontsize{9.39}{11.27}\selectfont level 1};
\path[line width=0.15mm, draw=L] (324.00,145.00) -- (329.00,145.00) -- (329.00,110.00) -- (324.00,110.00);
\draw(334.00,123.00) node[anchor=base west]{\fontsize{9.39}{11.27}\selectfont level 2};
\path[line width=0.15mm, draw=L] (325.00,110.00) -- (329.00,110.00) -- (329.00,92.00) -- (324.00,92.00);
\draw(334.00,98.00) node[anchor=base west]{\fontsize{9.39}{11.27}\selectfont level 3};
\draw(68.00,125.00) node[anchor=base west]{\fontsize{8.54}{10.24}\selectfont (b)};
\draw(227.00,125.00) node[anchor=base west]{\fontsize{8.54}{10.24}\selectfont (c)};
\draw(-75.00,124.00) node[anchor=base west]{\fontsize{8.54}{10.24}\selectfont (a)};
\path[line width=0.15mm, draw=L] (-33.00,162.40) -- (-28.00,162.40) -- (-28.00,91.40) -- (-33.00,91.40);
\end{tikzpicture}%
\caption{Levels (shaded regions) and superboxes (delimited by bold lines) for the Young diagram of (a) Riemannian, (b) contact, (c) a more general structure. The Young diagram for any Riemannian geodesic has a single level and a single superbox. The Young diagram of any contact sub-Riemannian geodesic has levels two levels containing $2$ and $1$ superboxes, respectively. The Young diagram (c) has three levels with 4, 2, 1 superboxes, respectively.}\label{f:Yd2}
\end{figure}
We collect the rows with the same length in $\y$, and we call them \emph{levels} of the Young diagram.  In particular, a level is the union of $r$ rows $\y_1,\ldots,\y_r$, and $r$ is called the \emph{size} of the level. The set of all the boxes $ai \in\y$ that belong to the same column and the same level of $\y$ is called \emph{superbox}. We use greek letters $\alpha,\beta,\dots$ to denote superboxes. Notice that that two boxes $ai$, $bj$ are in the same superbox if and only if $ai$ and $bj$ are in the same column of $\y$ and in possibly distinct row but with same length, i.e. if and only if $i=j$ and $n_a = n_b$. See Fig.~\ref{f:Yd2} for examples of levels and superboxes for Riemannian, contact and more general structures.

\begin{theorem}[See \cite{lizel}]\label{p:can}
There exists a smooth moving frame $\{E_{ai},F_{ai}\}_{ai \in \y}$ along the extremal $\lambda(t)$ such that
\begin{itemize}
\item[(i)] $\spn\{E_{ai}|_{\lambda(t)}\} = \ve_{\lambda(t)}$.
\item[(ii)] It is a Darboux basis, namely
\begin{equation}
\sigma(E_{ai},E_{bj}) = \sigma(F_{ai},F_{bj}) = \sigma(E_{ai},F_{bj}) = \delta_{ab}\delta_{ij}, \qquad ai,bj \in \y.
\end{equation}
\item[(iii)] The frame satisfies structural equations
\begin{equation}\label{zelframe}
\displaystyle\begin{cases}	
\dot{E}_{ai} = E_{a(i-1)} & a = 1,\dots,k,\quad i = 2,\dots, n_a,\\[0.1cm]
\dot{E}_{a1} = -F_{a1} & a= 1,\dots,k, \\[0.1cm]
\dot{F}_{ai} = \sum_{bj \in \y} R_{ai,bj}(t) E_{bj} - F_{a(i+1)} & a=1,\dots,k,\quad i = 1,\dots,n_a-1,\\[0.1cm]
\dot{F}_{an_a} = \sum_{bj \in \y} R_{bj,an_a}(t) E_{bj}  & a = 1, \dots,k,
\end{cases}
\end{equation}
for some smooth family of $n\times n$ symmetric matrices $R(t)$, with components $R_{ai,bj}(t) = R_{bj,ai}(t)$, indexed by the boxes of the Young diagram $\y$. The matrix $R(t)$ is \emph{normal} in the sense of \cite{lizel}. 
\end{itemize}
Properties (i)-(iii) uniquely define the frame up to orthogonal transformation that preserve the Young diagram. More precisely, if $\{\wt{E}_{ai},\wt{F}_{ai}\}_{ai \in \y}$ is another smooth moving frame along $\lambda(t)$ satisfying i)-iii), with some normal matrix $\wt{R}(t)$, then for any superbox $\alpha$ of size $r$ there exists an orthogonal (constant) $r\times r $ matrix $O^\alpha$ such that
\begin{equation}
\wt{E}_{ai} = \sum_{bj \in \alpha} O^\alpha_{ai,bj} E_{bj}, \qquad \wt{F}_{ai} = \sum_{bj \in \alpha} O^\alpha_{ai,bj} F_{bj}, \qquad ai \in \alpha.
\end{equation}
\end{theorem}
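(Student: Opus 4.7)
The proof I have in mind follows the strategy of Zelenko and Li~\cite{lizel}, working with the Jacobi curve
\begin{equation}
J(t) := e^{-t\vec{H}}_* \ve_{\lambda(t)} \subset T_\lambda(T^*M)
\end{equation}
and its derived flag. Since $e^{t\vec{H}}$ is a symplectomorphism and each $\ve_{\lambda(t)}$ is Lagrangian, $J(t)$ is a smooth curve of Lagrangian subspaces inside the fixed symplectic vector space $T_\lambda(T^*M)$. I would define recursively $J^{(i)}(t)$ as the span of $J(t)$ together with its derivatives in $t$ up to order $i$. By the equivalence of this definition of the Young diagram with the one given in Section~\ref{s:gfyd} (established in \cite[Sec.~6]{curvature}), the successive dimension jumps $\dim J^{(i)}-\dim J^{(i-1)}$ reproduce the numbers $d_i$ and hence the diagram $\y$.

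The construction would then proceed in three steps. First I would pull the sought-after frame back to $T_\lambda(T^*M)$ via $e^{-t\vec{H}}_*$: writing $\tilde{E}_{ai}(t) := e^{-t\vec{H}}_* E_{ai}(t)$, the structural equation $\dot{E}_{ai} = E_{a(i-1)}$ becomes the chain condition $\dot{\tilde{E}}_{ai} = \tilde{E}_{a(i-1)}$ with each $\tilde{E}_{ai}(t) \in J(t)$. Choosing a smooth basis of a complement of $J^{(n_a-2)}(t) \cap J(t)$ inside $J^{(n_a-1)}(t) \cap J(t)$, stratified by the levels of $\y$, gives candidates for the ``top'' elements $\tilde{E}_{an_a}(t)$; iterated differentiation then determines $\tilde{E}_{a(n_a-1)}, \ldots, \tilde{E}_{a1}$, and the ample and equiregular hypotheses guarantee that each of these lands back in $J(t)$.

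Second, I would set $F_{a1} := -\dot{E}_{a1}$ and define the remaining $F_{ai}$ with $i \geq 2$ recursively through
\begin{equation}
F_{ai} = \sum_{bj \in \y} R_{a(i-1),bj}(t) E_{bj} - \dot{F}_{a(i-1)},
\end{equation}
leaving the $R_{ai,bj}(t)$ as unknowns. Imposing the Darboux relations of (ii) together with the closing identity $\dot{F}_{an_a} = \sum_{bj} R_{bj,an_a} E_{bj}$ turns into a linear system in these unknowns and in the free parameters of the initial choice of $\tilde{E}_{an_a}$; a rank count organised column by column of $\y$, using that the $E$'s span a Lagrangian and that $[\vec{H},\cdot]$ preserves the symplectic pairing, shows the system is solvable and determines both the frame and $R$ up to the residual gauge preserving the flag $\{J^{(i)}(t)\}$.

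The technical heart of the theorem, and the step I expect to be the main obstacle, is to exhaust this residual gauge freedom so as to put $R(t)$ in the \emph{normal form} of \cite{lizel}: this amounts to forcing certain off-diagonal blocks of $R$ coupling boxes in different levels of $\y$ to vanish, which is carried out by a unipotent transformation adapted to the filtration $\{J^{(i)}(t)\}$ and requires delicate bookkeeping on the Young diagram. Once this normalisation is enforced, the only remaining symmetries are block-orthogonal transformations acting within each superbox $\alpha$ of size $r$ by an $r \times r$ orthogonal matrix $O^\alpha$, since these are precisely the transformations that simultaneously preserve the Darboux structure, the chain equations within each row, and the normal form of $R$. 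This yields the stated existence and uniqueness.
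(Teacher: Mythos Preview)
The paper does not give its own proof of this theorem: it is stated with the attribution ``See \cite{lizel}'' and no proof environment follows; the surrounding text immediately uses the result to define the scalar product $\langle\cdot|\cdot\rangle_{\gamma(t)}$, the splitting of $T_{\gamma(t)}M$, and the Ricci curvatures. So there is nothing in the paper to compare your sketch against.

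That said, your outline is a faithful reconstruction of the Zelenko--Li strategy. The pullback of the vertical spaces by $e^{-t\vec{H}}_*$ to a fixed symplectic vector space, the use of the derived flag of the Jacobi curve to stratify the construction of the $E_{ai}$ row by row, the recursive definition of the $F_{ai}$ through the structural equations, and the final normalisation of $R(t)$ to kill the residual unipotent gauge are exactly the steps carried out in \cite{lizel}. The one place where your description is slightly loose is the sentence ``iterated differentiation then determines $\tilde{E}_{a(n_a-1)},\ldots,\tilde{E}_{a1}$, and the ample and equiregular hypotheses guarantee that each of these lands back in $J(t)$'': strictly speaking, differentiating the top elements $\tilde{E}_{an_a}$ produces vectors in $J^{(1)}(t)$, not in $J(t)$, and one must correct by elements of $J(t)$ at each stage to stay inside the curve; the equiregularity ensures these corrections can be chosen smoothly. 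This is a bookkeeping point rather than a gap, and it is absorbed into the normal-form computation you flag as the technical heart.
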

Theorem~\ref{p:can} implies that the following objects are well defined:
\begin{itemize}
\item The scalar product $\langle\cdot|\cdot\rangle_{\gamma(t)}$, depending on $\gamma(t)$, such that the fields $X_{ai}|_{\gamma(t)}:=\pi_*F_{ai}|_{\lambda(t)}$ along $\gamma(t)$ are an orthonormal frame.
\item A splitting of $T_{\gamma(t)} M$, orthogonal w.r.t. $\langle\cdot|\cdot\rangle_{\gamma(t)}$
\begin{equation}
T_{\gamma(t)}M = \bigoplus_{\alpha}S_{\gamma(t)}^{\alpha}, \qquad S_{\gamma(t)}^{\alpha}:=\spn\{ X_{ai}|_{\gamma(t)}\mid \, ai \in \alpha\},
\end{equation}
where the sum is over the superboxes $\alpha$ of $\y$. Notice that the dimension of $S_{\gamma(t)}^{\alpha}$ is equal to the size $r$ of the level in which the superbox $\alpha$ is contained.
\item The sub-Riemannian directional curvature, defined as the quadratic form $\RR_{\gamma(t)}: T_{\gamma(t)} M \to \R$ whose representative matrix, in terms of an orthonormal frame $\{X_{ai}\}_{ai \in \y}$ is $R_{ai,bj}(t)$.
\item For each superbox $\alpha$, the \emph{sub-Riemannian Ricci curvatures}
\begin{equation}
\Ric_{\gamma(t)}^\alpha:= \trace \left(\RR_{\gamma(t)}\big|_{S^{\alpha}_{\gamma(t)}}\right)=\sum_{ai \in \alpha} \RR_{\gamma(t)}(X_{ai}),
\end{equation}
which is precisely the partial trace of $\RR_{\gamma(t)}$, identified through the scalar product with an operator on $T_{\gamma(t)}M$, on the subspace $S^\alpha_{\gamma(t)} \subseteq T_{\gamma(t)}M$.
\end{itemize}

In this sense, each superbox $\alpha$ in the Young diagram corresponds to a well defined subspace $S_{\gamma(t)}^\alpha$ of $T_{\gamma(t)}M$. Notice that, for Riemannian structures, the Young diagram is trivial with $n$ rows of length $1$, there is a single superbox, Theorem~\ref{p:can} reduces to Proposition~\ref{p:riemcan}, the scalar product $\langle\cdot|\cdot\rangle_{\gamma(t)}$ reduces to the Riemannian product computed along the geodesic $\gamma(t)$, the orthogonal splitting is trivial, the directional curvature $\RR_{\gamma(t)} = \Sec(\dot\gamma,\cdot)$ is the sectional curvature of the planes containing $\dot\gamma(t)$ and there is only one Ricci curvature $\Ric_{\gamma(t)} = \mathrm{Ric}^\nabla(\dot{\gamma}(t))$, where $\mathrm{Ric}^\nabla : \VecM \to \R$ is the classical Ricci curvature.

\subsubsection*{A compact form for the structural equations}
We rewrite system \eqref{zelframe} in a compact form. In the sequel it will be convenient to split a frame $\{E_{ai},F_{ai}\}_{ai \in \y}$ in subframes, relative to the rows of the Young diagram. For $a=1,\dots,k$, the symbol $E_a$ denotes the $n_a$-dimensional row vector
\begin{equation}
E_a = (E_{a1},E_{a2},\dots,E_{an_a}),
\end{equation}
with  analogous notation for $F_a$. Similarly, $E$ denotes the $n$-dimensional row vector
\begin{equation}
E = (E_1,\dots,E_k),
\end{equation}
and similarly for $F$.
Let $\Gamma_1 = \Gamma_1(\y),\Gamma_2=\Gamma_2(\y)$ be $n\times n$ matrices, depending on the Young diagram $\y$, defined as follows: for $a,b = 1,\dots,k$, $i=1,\dots,n_a$, $j=1,\dots,n_b$, we set
\begin{gather}
(\Gamma_1)_{ai,bj} := \delta_{ab}\delta_{i,j-1}, \label{eq:G1}\\
(\Gamma_2)_{ai,bj} := \delta_{ab}\delta_{i1}\delta_{j1} \label{eq:G2} .
\end{gather}
It is convenient to see $\Gamma_1$ and $\Gamma_2$ as block diagonal matrices, the $a$-th block on the diagonal being a $n_a\times n_a$ matrix with components $\delta_{i,j-1}$ and $\delta_{i1}\delta_{j1}$, respectively (see also Eq.~\eqref{eq:Gamma}). Notice that $\Gamma_1$ is nilpotent and $\Gamma_2$ is idempotent.  Then, we rewrite the system \eqref{zelframe} as follows
\begin{equation}
\begin{pmatrix}
\dot{E} & \dot{F}
\end{pmatrix} = 
\begin{pmatrix}
E & F
\end{pmatrix}
\begin{pmatrix}
\Gamma_1 & R(t) \\ -\Gamma_2 & -\Gamma_1^*
\end{pmatrix}.
\end{equation}
By exploiting the structural equations, we write a linear differential equation in $\R^{2n}$ that rules the evolution of the Jacobi fields along the extremal.

\subsection{Linearized Hamiltonian}
Let $\xi \in T_{\lambda}(T^*M)$ and $X(t):= e^{t\vec{H}}_* \xi$ be the associated Jacobi field along the extremal. In terms of any moving frame $\{E_{ai},F_{ai}\}_{ai \in \y}$ along $\lambda(t)$, it has components $(p(t),x(t)) \in \R^{2n}$, namely 
\begin{equation}
X(t) = \sum_{ai \in \y} p_{ai}(t) E_{ai}|_{\lambda(t)} + x_{ai}(t) F_{ai}|_{\lambda(t)}.
\end{equation}
If we choose the canonical frame, using the structural equations, we obtain that the coordinates of the Jacobi field satisfy the following system of linear ODEs:
\begin{equation}\label{eq:linsys1}
\begin{pmatrix}
\dot{p} \\ \dot{x}
\end{pmatrix} = \begin{pmatrix} - \Gamma_1 & -R(t) \\ \Gamma_2 & \Gamma_1^*
\end{pmatrix} \begin{pmatrix}
p \\ x
\end{pmatrix}.
\end{equation}
In this sense, the canonical frame is a tool to write the linearisation of the Hamiltonian flow along the geodesic in a canonical form. The r.h.s. of Eq.~\eqref{eq:linsys1} is the ``linearised Hamiltonian vector field'', written in its normal form (see also Eq.~ \eqref{eq:Hamiltonian}). The linearised Hamiltonian field is, in general, non-autonomous. Notice also that the canonical form of the linearisation depends on the Young diagram $\y$ (through the matrices $\Gamma_1$ and $\Gamma_2$) and the curvature matrix $R(t)$.

In the Riemannian case, $\y = \ytableaushort{\empty}$ for any geodesic, $\Gamma_1 =0$, $\Gamma_2 = \mathbb{I}$ and we recover the classical Jacobi equation, written in terms of an orthonormal frame along the geodesic
\begin{equation}
\ddot{x}+R(t)x = 0.
\end{equation}

\subsection{Riccati equation: blow-up time and conjugate time}
Now we study, with a single matrix equation, the space of Jacobi fields along the extremal associated with an ample, equiregular geodesic. We write the generic element of $\mc{L}(t)$ in terms of the frame along the extremal. Let $E_{\lambda(t)}, F_{\lambda(t)}$ be row vectors, whose entries are the elements of the frame. The action of $e^{t\vec{H}}_*$ is meant entry-wise. Then
\begin{equation}
\mc{L}(t) \ni e^{t\vec{H}}_* E_{\lambda(0)} = E_{\lambda(t)} M(t) + F_{\lambda(t)} N(t),
\end{equation}
for some smooth families $M(t), N(t)$ of $n\times n$ matrices. Notice that
\begin{equation}
M(0) = \mathbb{I}, \qquad N(0) = 0, \qquad \det{N}(t) \neq 0 \text{ for } t \in (0,\eps).
\end{equation}
The first $t>0$ such that $\det{N}(t) = 0$ is indeed the first conjugate time. By using once again the structural equations, we obtain the following system of linear ODEs:
\begin{equation}\label{eq:linsys}
\frac{d}{dt}\begin{pmatrix}
M \\ N
\end{pmatrix} = 
\begin{pmatrix}
-\Gamma_1 & -R(t) \\
\Gamma_2 & \Gamma_1^*
\end{pmatrix} \begin{pmatrix}
M \\ N
\end{pmatrix}.
\end{equation}
The solution of the Cauchy problem with the initial datum $M(0) = \mathbb{I}$, $N(0) = 0$ is defined on the whole interval on which $R(t)$ is defined. The columns of the $2n\times n$ matrix $\bigl( \begin{smallmatrix}M \\ N\end{smallmatrix}\bigr)$ are the components of Jacobi fields along the extremal w.r.t. the given frame, and they generate the $n$-dimensional subspace of Jacobi fields $X(t)$ along the extremal $\lambda(t)$ such that $\pi_* X(0) = 0$.

Since, for small $t>0$, $\mc{L}(t) \cap \mc{L}(0) = \{0\}$, we have that
\begin{equation}
\mc{L}(t) = \spn\{F_{\lambda(t)} + E_{\lambda(t)} V(t)\},\qquad t>0,
\end{equation}
where $V(t):=M(t)N(t)^{-1}$ is well defined and smooth for $t>0$ until the first conjugate time. Since $\mc{L}(t)$ is a Lagrangian subspace and the canonical frame is Darboux, $V(t)$ is a symmetric matrix. Moreover it satisfies the following Riccati equation:
\begin{equation}\label{eq:Riccati}
\dot{V} = -\Gamma_1 V - V \Gamma_1^* - R(t) - V \Gamma_2 V.
\end{equation}
We characterize $V(t)$ as the solution of a Cauchy problem with limit initial condition.
\begin{lemma}
The matrix $V(t)$ is the unique solution of the Cauchy problem
\begin{equation}\label{eq:cauchyp}
\dot{V} = -\Gamma_1 V - V \Gamma_1^* - R(t) - V \Gamma_2 V, \qquad \displaystyle\lim_{t\to 0^+} V^{-1} = 0,
\end{equation}
in the sense that $V(t)$ is the unique solution such that $V(t)$ is invertible for small $t>0$ and $\lim_{t\to 0^+}V(t)^{-1} = 0$.
\end{lemma}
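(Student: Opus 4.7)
My plan is to split the proof into an existence part, which is essentially already done by the construction preceding the lemma, and a uniqueness argument obtained by inverting the unknown matrix so as to convert the singular limit condition at $t=0$ into a standard Cauchy initial condition.

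For existence I would simply observe that $V(t) = M(t) N(t)^{-1}$ is invertible for $t$ in a punctured right-neighbourhood of $0$, because $M(0) = \mathbb{I}$ implies $M(t)^{-1}$ exists for small $t$, so that $V(t)^{-1} = N(t) M(t)^{-1}$ is well defined there and tends to $N(0) M(0)^{-1} = 0$ as $t \to 0^+$. The fact that this $V$ satisfies the Riccati equation has already been derived from the linear system \eqref{eq:linsys}.

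For uniqueness I would let $\widetilde{V}$ be another candidate, set $U := V^{-1}$ and $\widetilde{U} := \widetilde{V}^{-1}$ where defined, and extend both continuously by $U(0) = \widetilde{U}(0) = 0$ using the hypothesis. Differentiating the identity $V U = \mathbb{I}$ yields $\dot U = - U \dot V U$; substituting the Riccati equation for $V$ and simplifying repeatedly with $U V = V U = \mathbb{I}$ should give the inverse Riccati equation
\begin{equation}
\dot U = \Gamma_2 + U \Gamma_1 + \Gamma_1^* U + U R(t) U.
\end{equation}
Its right-hand side is a polynomial in $U$ (hence locally Lipschitz in $U$) with continuous coefficients in $t$, so I would apply the Picard--Lindel\"of theorem with initial datum $U(0) = 0$ to conclude $U \equiv \widetilde{U}$, and therefore $V \equiv \widetilde{V}$ on a common interval.

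The main obstacle I expect is the transition from the singular boundary behaviour of $V$ at $t = 0$ to a genuine Cauchy problem for $U$: concretely, one must justify that $U$ extends $C^1$-smoothly, not merely continuously, up to $t = 0$ so that standard ODE uniqueness applies. This should come for free from the inverse Riccati equation itself, which forces $\dot U (0^+) = \Gamma_2$ once the continuous extension $U(0) = 0$ is in place, giving the required regularity at the endpoint.
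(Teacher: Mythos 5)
Your proposal is correct and follows essentially the same route as the paper: existence via $V=MN^{-1}$ with $V^{-1}=NM^{-1}\to 0$, and uniqueness by passing to the inverse $W=V^{-1}$, which satisfies the standard (non-singular) Cauchy problem $\dot W=\Gamma_2+W\Gamma_1+\Gamma_1^*W+WR(t)W$, $W(0)=0$, to which classical ODE uniqueness applies (this is exactly the paper's Lemma~\ref{l:limit} in Appendix~\ref{a:well}). Your extra remark on extending $U$ in a $C^1$ fashion up to $t=0$ just makes explicit a point the paper glosses over, so no new ideas are needed.
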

\begin{proof}
As we already observed, $V(t)$ satisfies Eq.~\eqref{eq:Riccati}. Moreover $V(t)$ is invertible for $t>0$ small enough, $V(t)^{-1} = N(t) M(t)^{-1}$ and $\lim_{t\to 0^+} V^{-1} = 0$. The uniqueness follows from the well-posedness of the limit Cauchy problem. See Lemma~\ref{l:limit} in Appendix \ref{a:well}.
\end{proof}

It is well known that the solutions of Riccati equations are not, in general, defined for all $t$, but they may blow up at finite time. The next proposition relates the occurrence of such blow-up time with the first conjugate point along the geodesic.

\begin{proposition}\label{p:blowconj}
Let $V(t)$ the unique solution of \eqref{eq:cauchyp}, defined on its maximal interval $I \subseteq (0,+\infty)$.
 Let $t_c := \inf\{t >0|\, \mc{L}(t) \cap \ve_{\lambda(t)} \neq \{0\}\}$ be the first conjugate point along the geodesic. Then $I= (0,t_c)$.
\end{proposition}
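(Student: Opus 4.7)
The plan is to exploit the explicit representation $V(t) = M(t)N(t)^{-1}$ coming from the linear system \eqref{eq:linsys}, and then relate the blow-up of $V$ to the singularity of $N$, which is exactly how conjugate times are detected.

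First, I would observe that $(M(t),N(t))$ solves the linear ODE \eqref{eq:linsys} with $M(0)=\mathbb{I}$, $N(0)=0$, so it is defined for all $t$ in the interval on which $R(t)$ is defined, and the corresponding flow is invertible. By the very definition of conjugate time, $N(t)$ is invertible precisely on $(0,t_c)$, so $V(t):=M(t)N(t)^{-1}$ is smooth on $(0,t_c)$ and (as shown above) satisfies the Riccati equation \eqref{eq:Riccati} together with the limit condition $\lim_{t\to 0^+} V^{-1}=0$. By the uniqueness statement of the preceding lemma, this is \emph{the} solution, so its maximal interval $I$ contains $(0,t_c)$; in particular $\sup I \geq t_c$.

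Second, I would show $\sup I \leq t_c$ by contradiction. Assume $\sup I > t_c$, so $V$ extends smoothly to a neighborhood of $t_c$. Pick any $\xi \in \ker N(t_c)\setminus\{0\}$ (which exists since $\det N(t_c)=0$). From $V(t)N(t)=M(t)$ on $(0,t_c)$, and applying it to $\xi$, we take the limit $t\to t_c^-$ to obtain
\begin{equation}
M(t_c)\xi = V(t_c)\cdot N(t_c)\xi = 0 .
\end{equation}
But then $\bigl(M(t_c),N(t_c)\bigr)\xi = 0$, which is impossible: the evolution \eqref{eq:linsys} is linear with a smooth coefficient matrix, hence its fundamental solution is invertible, and the initial datum $(M(0),N(0))\xi = (\xi,0)\neq 0$ cannot be carried to zero. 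This contradiction forces $\sup I = t_c$.

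Finally, I would check $\inf I = 0$: since $V(t)=M(t)N(t)^{-1}$ is smooth on an interval $(0,\varepsilon)$ with $N(t)$ invertible there, the maximal interval reaches down to $0$, and the limit condition is satisfied. Combining these three points gives $I=(0,t_c)$.

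The only subtle point is the reverse direction: one must rule out the possibility that the Riccati solution extends past $t_c$ while $N(t)$ becomes singular. The argument above turns on the invertibility of the flow of \eqref{eq:linsys} — i.e., that no nonzero initial Jacobi datum can be sent to zero — and on the elementary fact that if $V(t)$ stays bounded as $t \to t_c^-$, then $V(t)N(t)\xi \to 0$ for any $\xi\in\ker N(t_c)$, contradicting $M(t_c)\xi\neq 0$. This step is the crux; once it is in place, the equality $I=(0,t_c)$ is immediate.
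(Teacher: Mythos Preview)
Your argument is correct and in fact more elementary than the paper's for the direction $\sup I\le t_c$. The paper proceeds geometrically: given the maximal solution $V(t)$ on $I$, it sets $\widetilde{\mc{L}}(t):=\spn\{F_{\lambda(t)}+E_{\lambda(t)}V(t)\}$, observes that this family of Lagrangian subspaces is transversal to the vertical for all $t\in I$, and then invokes an argument from \cite[Ch.~8]{jurdjevicbook} to conclude that $\widetilde{\mc{L}}(t)$ is carried by the Hamiltonian flow, hence coincides with $\mc{L}(t)$; since $\mc{L}(t_c)$ fails to be transversal, $t_c\notin I$. Your route avoids this geometric detour entirely: from $V(t)N(t)=M(t)$ on $(0,t_c)$ and the assumed smooth extension of $V$ through $t_c$, you pass to the limit and force $M(t_c)\xi=0$ for any $\xi\in\ker N(t_c)$, contradicting the invertibility of the fundamental solution of the linear system~\eqref{eq:linsys}. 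This is a clean linear-algebra argument that stays within the $(M,N)$ picture already set up in the text. The first direction $I\supseteq(0,t_c)$ is handled the same way in both proofs, and your remark on $\inf I=0$ is then automatic from $I\supseteq(0,t_c)$ and $I\subseteq(0,+\infty)$.
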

\begin{proof}
First, we prove that $ I \supseteq (0,t_c)$. For any $t \in (0,t_c)$, $\mc{L}(t)$ is transversal to $\ve_{\lambda(t)}$. Then the matrix $N(t)$ is non-degenerate for all $t \in (0,t_c)$. Then $V(t):= M(t)N(t)^{-1}$ is the solution of \eqref{eq:cauchyp}, and $I\supseteq (0,t_c)$.

By contradiction, assume that $I \supset (0,t_c)$. Consider the $n$-dimensional smooth families of subspaces of $T_{\lambda(t)}(T^*M)$:
\begin{align}
\mathcal{L}(t)& =\spn\{F_{\lambda(t)}N(t) +E_{\lambda(t)}M(t)\}, & & t \in [0,+\infty) ,\\
\wt{\mathcal{L}}(t) & = \spn\{F_{\lambda(t)} + E_{\lambda(t)}V(t)\}, & &  t \in I.
\end{align}
Observe that $\wt{\mathcal{L}}(t) \cap \ve_{\lambda(t)} = \{0\}$ for all $t \in I$. On $(0,t_c)$ we have $V(t) = M(t) N(t)^{-1}$, hence $\mathcal{L}(t) = \wt{\mathcal{L}}(t)$ on this interval. By continuity, also $\mathcal{L}(t_c) =\wt{\mathcal{L}}(t_c)$. But the first subspace intersects $\mathcal{V}_{\lambda(t_c)}$ (by definition of $t_c$), while the second does not (by construction).
\end{proof}

Proposition~\ref{p:blowconj} states that the problem of finding the first conjugate time is equivalent to the study of the blow-up time of the Cauchy problem \eqref{eq:cauchyp} for the Riccati equation.
%
\section{Microlocal comparison theorem}\label{s:microlocal}

In Sec.~\ref{s:Jac}, we reduced the problem of finding the conjugate points along an ample, equiregular sub-Riemannian geodesic to the study of the blow-up time of the solution of the Cauchy problem
\begin{equation}
\dot{V} + \Gamma_1 V + V \Gamma_1^* + R(t) + V \Gamma_2 V = 0, \qquad \displaystyle\lim_{t\to 0^+} V^{-1} = 0.
\end{equation}
It is well known that the same equation controls the conjugate times of a LQ optimal control problems, defined by appropriate matrices $A,B,Q$, where $A=\Gamma_1^*$, $BB^* = \Gamma_2$, and the potential $Q$  replaces $R(t)$. In this sense, for what concerns the study of conjugate points, LQ problems represent the natural \emph{constant curvature models}.

\subsection{LQ optimal control problems}

Linear quadratic optimal control problems (LQ in the following) are a classical topic in control theory. They consist in a linear control system with a cost given by a quadratic Lagrangian. We briefly recall the general features of a LQ problem, and we refer to \cite[Ch. 16]{agrachevbook} and \cite[Ch. 7]{jurdjevicbook} for further details. We are interested in \emph{admissible trajectories}, namely  curves $x:[0,t]\to \mathbb{R}^n$ for which there exists a control $u \in L^2([0,t],\mathbb{R}^k)$ such that
\begin{equation}\label{eq:lq1}
\dot{x} = Ax +Bu, \qquad x(0) = x_0, \qquad x(t) = x_1,\qquad x_0,x_1,t \text{ fixed},
\end{equation}
that minimize a quadratic functional $\phi_{t}: L^2([0,t],\mathbb{R}^k) \to \mathbb{R}$ of the form
\begin{equation}\label{eq:lq2}
\phi_{t}(u) = \frac{1}{2}\int_{0}^{t} \left(u^* u - x^*Qx \right)dt.
\end{equation}
Here $A,B,Q$ are constant matrices of the appropriate dimension. The vector $Ax$ represents the \emph{drift}, while the columns of $B$ are the controllable directions. The meaning of the \emph{potential} term $Q$ will be clear later, when we will introduce the Hamiltonian of the LQ problem.

We only deal with \emph{controllable} systems, i.e. we assume that there exists $m >0$ such that
\begin{equation}
\rank(B,AB,\ldots,A^{m-1}B) = n.
\end{equation}
This hypothesis implies that, for any choice of $t,x_0,x_1$, the set of controls $u$ such that the associated trajectory $x_u :[0,t] \to \mathbb{R}^n$ connects $x_0$ with $x_1$ in time $t$ is not empty.

It is well known that the optimal trajectories of the LQ system are projections $(p,x) \mapsto x$ of the solutions of the Hamiltonian system
\begin{equation}
\dot{p}  = -\partial_x H (p,x), \qquad \dot{x} = \partial_p H (p,x), \qquad (p,x) \in T^*\R^n = \R^{2n},
\end{equation}
where the Hamiltonian function $H: \mathbb{R}^{2n} \to \mathbb{R}$ is defined by
\begin{equation}\label{eq:Hamiltonian}
H(p,x) = \frac{1}{2}\begin{pmatrix} p^* & x^* \end{pmatrix} \begin{pmatrix}
BB^* & A \\ A^* & Q
\end{pmatrix} \begin{pmatrix}
p \\ x
\end{pmatrix}.
\end{equation}
We denote by $P_t : \R^{2n} \to \R^{2n}$ the flow of the Hamiltonian system, which is defined for all $t \in \R$. We employ canonical coordinates $(p,x)$ on $T^*\R^n = \R^{2n}$ such that the symplectic form is written $\sigma = \sum_{i=1}^n dp_i \wedge dx_i$. The flow lines of $P_t$ are the integral lines of the \emph{Hamiltonian vector field} $\vec{H} \in \text{Vec}(\mathbb{R}^{2n})$, defined by $dH(\cdot) = \sigma(\,\cdot\,,\vec{H})$. More explicitly
\begin{equation}\label{eq:Hamiltonianvf}
\vec{H}_{(p,x)}=\begin{pmatrix} -A^* & -Q \\ BB^* & A \end{pmatrix}\begin{pmatrix}
p \\ x
\end{pmatrix}.
\end{equation}

We stress that not all the integral lines of the Hamiltonian flow lead to minimizing solutions of the LQ problem, since they only satisfy first order conditions for optimality. Sufficiently short segments, however, are optimal, but they lose optimality at some time $t>0$, called the \emph{first conjugate time}.
\begin{definition}
We say that $t$ is a conjugate time if there exists a solution of the Hamiltonian equations such that $x(0) = x(t) = 0$.
\end{definition}

The first conjugate time determines existence and uniqueness of minimizing solutions of the LQ problem, as specified by the following proposition (see \cite[Sec. 16.4]{agrachevbook}).

\begin{proposition}
Let $t_c$ be the first conjugate time of the LQ problem \eqref{eq:lq1}-\eqref{eq:lq2}
\begin{itemize}
\item For $t<t_c$, for any $x_0,x_1$ there exists a unique minimizer connecting $x_0$ with $x_1$ in time $t$. 
\item For $t>t_c$, for any $x_0,x_1$ there exists no minimizer connecting $x_0$ with $x_1$ in time $t$.
\end{itemize}
\end{proposition}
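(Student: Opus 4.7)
The plan is to reduce the statement to a second-variation analysis. Since $\phi_t$ is a quadratic functional (plus affine terms) on the admissible set $\mathcal{U}_t^{x_0,x_1}$ of $L^2$ controls steering $x_0$ to $x_1$ in time $t$, a critical point $u_*$ is a minimizer iff the \emph{index form}
\begin{equation}
B_t(v,v) := 2\bigl[\phi_t(u_*+v) - \phi_t(u_*)\bigr], \qquad v \in \mathcal{U}_t^0,
\end{equation}
on the linear space $\mathcal{U}_t^0$ of controls steering $0$ to $0$ in time $t$, is positive semi-definite; because $\phi_t$ is exactly quadratic, $B_t$ is independent of $u_*$ (it is just the Hessian). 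Crucially, if $B_t(v_0,v_0)<0$ for some $v_0\in\mathcal{U}_t^0$, then $\phi_t(u_*+sv_0)=\phi_t(u_*)+\tfrac{s^2}{2}B_t(v_0,v_0)\to-\infty$ as $|s|\to\infty$, which kills any possibility of a minimizer.

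First I would settle existence and uniqueness of the critical point. By the Pontryagin Maximum Principle, critical points of $\phi_t$ on $\mathcal{U}_t^{x_0,x_1}$ correspond to initial covectors $p_0$ such that the Hamiltonian trajectory $P_s(p_0,x_0)$ satisfies $x(t)=x_1$; since $P_t$ is linear, the map $p_0\mapsto x(t)$ is affine, and under the controllability assumption its failure to be bijective is exactly equivalent to $t$ being a conjugate time. Hence for every $t<t_c$ (indeed for every non-conjugate $t$) a unique extremal $u_*$ exists.

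Next, for $t<t_c$ I would prove positivity of $B_t$ on $\mathcal{U}_t^0\setminus\{0\}$ via the Riccati solution $V$. By Proposition~\ref{p:blowconj}, applied with the LQ data $A=\Gamma_1^*$, $BB^*=\Gamma_2$ and $R(s)\equiv Q$, $V(s)$ exists and is smooth on $(0,t_c)$. Differentiating $\langle V(s)x_v(s),x_v(s)\rangle$ along a trajectory $\dot{x}_v=Ax_v+Bv$ and substituting the Riccati equation yields the standard ``completion of squares'' identity
\begin{equation}
B_t(v,v) = \int_0^t \bigl\|v(s)+B^*V(s)x_v(s)\bigr\|^2\,ds + \Bigl[\langle V(s)x_v(s),x_v(s)\rangle\Bigr]_0^t,
\end{equation}
where the boundary term at $s=t$ vanishes because $x_v(t)=0$ and the boundary term at $s=0^+$ vanishes because $\lim_{s\to 0^+}V(s)^{-1}=0$ combined with $x_v(s)=O(s)$ makes the product $o(1)$. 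The integral is manifestly non-negative, and vanishing forces $v=-B^*Vx_v$; together with $x_v(0)=0$ and ODE uniqueness, this gives $x_v\equiv 0$ and $v\equiv 0$. Hence $B_t$ is positive definite and $u_*$ is the unique minimizer.

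Finally, for $t>t_c$ I would exhibit $v_0\in\mathcal{U}_t^0$ with $B_t(v_0,v_0)<0$; by the initial reduction this forbids any minimizer. The seed is the non-trivial Hamiltonian trajectory $(p(s),x(s))$ witnessing the conjugate point at $t_c$, which has $x(0)=x(t_c)=0$: its control $u_c(s):=B^*p(s)$ lies in $\mathcal{U}_{t_c}^0$ and an integration by parts against the Hamiltonian equations shows $B_{t_c}(u_c,u_c)=0$, i.e. $u_c$ is a null direction of $B_{t_c}$. Extending $u_c$ by zero on $[t_c,t]$ produces a control that still steers $0$ to $0$; a further small correction $w$ supported on $[t_c,t]$ (available by controllability on that interval) can be used to perturb strictly into the negative cone of $B_t$, because the quadratic form acquires, to first order in a well-chosen direction, a sign governed by the nontrivial co-state $p(t_c)\neq 0$. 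The main obstacle is exactly this perturbation step, i.e.\ upgrading a null direction of $B_{t_c}$ to a strictly negative direction of $B_t$ for $t>t_c$; the cleanest alternative I would use as a fallback is the Morse-type monotonicity of the index of $B_t$, which is a non-decreasing integer-valued function of $t$ that jumps by the multiplicity of each conjugate time, and is therefore strictly positive for $t>t_c$.
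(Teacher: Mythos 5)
The paper itself does not prove this proposition: it quotes it from \cite[Sec. 16.4]{agrachevbook}, so your argument can only be judged on its own merits. Your overall route (exact quadratic expansion of $\phi_t$ on the affine admissible set, positivity of the index form $B_t$ via Riccati completion of squares for $t<t_c$, a negative direction of $B_t$ forcing unboundedness below for $t>t_c$) is the standard and correct one, and the reduction steps (unique extremal for non-conjugate $t$ via invertibility of $N(t)$, ``negative direction $\Rightarrow$ no minimizer'') are fine. But the second bullet of the proposition is not actually established: you only produce a \emph{null} direction of $B_{t_c}$ (the control $u_c=B^*p$ of the conjugate-point witness, extended by zero), you yourself call the passage to a strictly negative direction of $B_t$ ``the main obstacle'', and the fallback you invoke (monotonicity of the Morse index with jumps equal to multiplicities of conjugate times) is precisely the statement that needs proof. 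The gap can be closed as follows: suppose $B_t\geq 0$ for some $t>t_c$ and let $z$ be the broken control. Since $B_t(z,z)=0$, the Cauchy--Schwarz inequality for nonnegative forms gives $B_t(z,\cdot)\equiv 0$ on $\mathcal{U}_t^0$, so $z$ satisfies the first-order conditions for the (linearly constrained, submersive by controllability) quadratic problem: there is an adjoint $\tilde p$ with $\dot{\tilde p}=-A^*\tilde p-Qx_z$ and $z=B^*\tilde p$ a.e. On $[t_c,t]$ one has $z\equiv 0$, $x_z\equiv 0$, hence $B^*e^{-A^*(s-t_c)}\tilde p(t_c)\equiv 0$, and the Kalman condition forces $\tilde p(t_c)=0$; backward uniqueness on $[0,t_c]$ then gives $\tilde p\equiv 0$, so $B^*p\equiv 0$, and controllability again yields $p\equiv 0$, contradicting the nontriviality of the conjugate-point witness. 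Hence $B_t$ has a strictly negative direction and $\phi_t$ is unbounded below on every admissible class.

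There is also a genuine flaw in your positivity argument for $t<t_c$: the boundary term at $s\to 0^+$. For $v\in L^2$ one only has $|x_v(s)|=O(\sqrt{s})$, not $O(s)$, and, more importantly, for the degenerate models relevant here ($BB^*=\Gamma_2$ of low rank) the singular solution $V(s)$ blows up \emph{faster} than $1/s$ in the higher-order directions (e.g. like $s^{-3}$ for a row of length two), so ``$V\sim 1/s$ times $O(s)^2$'' is not a proof that $\langle V(s)x_v(s),x_v(s)\rangle\to 0$. Two clean fixes: either identify $\langle V(s)x,x\rangle$ with twice the optimal cost of steering $0$ to $x$ in time $s$, so that $\langle V(s)x_v(s),x_v(s)\rangle\leq 2\int_0^s(v^*v-x_v^*Qx_v)\,d\tau\to 0$; or avoid the singular solution altogether and complete squares with the Riccati solution attached to the vertical subspace at time $-\eps$ for small $\eps>0$, which (by autonomy of the system and $t+\eps<t_c$) is finite on all of $[0,t]$, so both boundary terms vanish because $x_v(0)=x_v(t)=0$. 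Finally, with the paper's sign convention $\dot V+A^*V+VA+Q+VBB^*V=0$ the completed square is $\|v-B^*Vx_v\|^2$, not $\|v+B^*Vx_v\|^2$; this is only a sign slip, but it should be corrected.
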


The first conjugate time can be also characterised in terms of blow-up time of a matrix Riccati equation. Consider the vector subspace of solutions of Hamilton equations such that $x(0) = 0$. A basis of such a space is given by the solutions $(p_i(t),x_i(t))$ with initial condition $p_i(0) := e_i$, $x_i(0) = 0$, where $e_i$, for $i=1,\ldots,n$ is the standard basis of $\R^n$. Consider the matrices $M$, $N$, whose columns are the vectors $p_i(t)$ and $x_i(t)$, respectively. They solve the following equation:
\begin{equation}
\frac{d}{dt}\begin{pmatrix}
M \\ N
\end{pmatrix} = 
\begin{pmatrix}
-A^* & -Q \\
BB^* & A
\end{pmatrix} \begin{pmatrix}
M \\ N
\end{pmatrix},
\end{equation}
where $M(0) = \mathbb{I}$ and $N(0) = 0$. Under the controllability condition, $N(t)$ is non-singular for $t>0$ sufficiently small. By definition, the first conjugate time of the LQ problem is the first $t>0$ such that $N(t)$ is singular. Thus, consider $V(t) :=M(t)N(t)^{-1}$. The matrix $V(t)$ is symmetric and is the unique solution of the following Cauchy problem with limit initial condition:
\begin{equation}\label{eq:Ricintro}
\dot{V} + A^*V + VA + Q + VBB^*V = 0, \qquad \lim_{t\to 0^+} V^{-1} = 0.
\end{equation}
Thus we have the following characterization of the first conjugate time of the LQ  problem.
\begin{lemma}
The maximal interval of definition of the unique solution of the Cauchy problem
\begin{equation}
\dot{V} + A^*V + VA + Q + VBB^*V = 0, \qquad \lim_{t\to 0^+} V^{-1} = 0,
\end{equation}
is $I= (0,t_c)$, where $t_c$ is the first conjugate time of the associated LQ optimal control problem.
\end{lemma}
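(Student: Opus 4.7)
The plan is to mimic the argument of Proposition~\ref{p:blowconj}, since the matrix structure of the LQ Riccati equation is identical to the sub-Riemannian one, with $\Gamma_1$, $\Gamma_2$, $R(t)$ replaced respectively by $-A^*$, $BB^*$, $Q$. I would prove the two inclusions $(0,t_c)\subseteq I$ and $I\subseteq(0,t_c)$ separately.

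For the inclusion $(0,t_c)\subseteq I$, I would start from the fundamental matrices $M(t),N(t)$ introduced before the statement, observe that by the definition of $t_c$ the matrix $N(t)$ is non-singular on $(0,t_c)$, and define $V(t):=M(t)N(t)^{-1}$. A direct differentiation of this product, using the linear system satisfied by $M,N$, shows that $V(t)$ solves the Riccati equation
\begin{equation*}
\dot{V}+A^*V+VA+Q+VBB^*V=0.
\end{equation*}
Symmetry of $V(t)$ follows from the fact that the columns of $(M,N)^{*}$ span a Lagrangian subspace (a standard computation using $\sigma$-orthogonality of the initial data and invariance of $\sigma$ under the Hamiltonian flow). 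Since $M(0)=\mathbb{I}$ and $N(0)=0$, the inverse $V(t)^{-1}=N(t)M(t)^{-1}$ is well defined for small $t>0$ and tends to $0$ as $t\to 0^+$. By the uniqueness result for Riccati Cauchy problems with limit initial condition (the analogue of Lemma~\ref{l:limit} invoked earlier in the paper), this $V(t)$ is \emph{the} maximal solution, hence $I\supseteq(0,t_c)$.

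For the reverse inclusion $I\subseteq(0,t_c)$, I would follow the geometric argument used in Proposition~\ref{p:blowconj}. Let $V(t)$ be the maximal solution on $I$ and consider the family of $n$-dimensional subspaces
\begin{equation*}
\widetilde{\mc{L}}(t):=\{(V(t)x,x)\mid x\in\R^n\}\subset\R^{2n},
\end{equation*}
which is Lagrangian (by symmetry of $V$) and transverse to the vertical subspace $\ve=\{(p,0)\}$ for every $t\in I$ (by construction it is the graph of a well-defined map). A direct computation using the Riccati equation and the explicit form \eqref{eq:Hamiltonianvf} of $\vec H$ shows that $\widetilde{\mc{L}}(t)$ is invariant under the Hamiltonian flow, i.e.\ $\widetilde{\mc{L}}(t+s)=P_s\widetilde{\mc{L}}(t)$. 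Since $\widetilde{\mc{L}}(\eps)$ coincides with $\mc{L}(\eps):=\mathrm{span}$ of the columns of $\bigl(\begin{smallmatrix}M(\eps)\\ N(\eps)\end{smallmatrix}\bigr)$ for small $\eps>0$ (both are the image under $P_\eps$ of the vertical subspace at the origin), the invariance forces $\widetilde{\mc{L}}(t)=\mc{L}(t)$ throughout $I$. Hence on $I$ the subspace $\mc{L}(t)$ is transverse to the vertical, which means $N(t)$ is non-singular on $I$; by definition of $t_c$, this gives $I\subseteq(0,t_c)$.

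The only non-routine ingredient is the well-posedness of the limit Cauchy problem used for uniqueness; this is exactly the statement supplied by Lemma~\ref{l:limit} in the appendix, so once that is invoked the argument is essentially the same as in the sub-Riemannian case, with the bonus that in the LQ setting the coefficient matrices are constant and the Hamiltonian flow $P_t$ is globally defined, which removes any worry about the domain of the background flow.
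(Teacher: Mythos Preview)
Your proposal is correct and follows essentially the same route as the paper: the lemma is stated there without a standalone proof, the surrounding text having already set up $V(t)=M(t)N(t)^{-1}$ from the fundamental solution and referred back to Proposition~\ref{p:blowconj} for the identical two-inclusion argument you reproduce. One small slip: in your opening sentence the replacement should be $\Gamma_1\leftrightarrow A^*$ (not $-A^*$), since the paper's convention is $A=\Gamma_1^*$; this does not affect the argument itself.
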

The same characterisation holds also for conjugate points along sub-Riemannian geodesics (see Proposition~\ref{p:blowconj}), and in this sense LQ problems provide models for computing conjugate times along sub-Riemannian geodesics.

\subsection{Constant curvature models}

Let $\y$ be a Young diagram associated with some ample, equiregular geodesic, and let $\Gamma_1 = \Gamma_1(\y)$, $\Gamma_2 = \Gamma_2(\y)$ the matrices defined in Sec.~\ref{s:prel}. Let $Q$ be a symmetric $n\times n$ matrix.

\begin{definition}
We denote by $\mathrm{LQ}(\y;Q)$ the \emph{constant curvature model}, associated with a Young diagram $\y$ and constant curvature equal to $Q$, defined by the LQ problem with Hamiltonian
\begin{equation}
H(p,x) = \frac{1}{2}\left(p^* BB^* p + 2p^*Ax + x^*Qx\right), \qquad A = \Gamma_1^*, \quad BB^* = \Gamma_2.
\end{equation}
We denote by $t_c(\y;Q) \leq +\infty$ the first conjugate time of $\mathrm{LQ}(\y;Q)$.
\end{definition}
\begin{remark}
Indeed there are many matrices $B$ such that $BB^* = \Gamma_2$, namely LQ problems with the same Hamiltonian, but their first conjugate time is the same. In particular, without loss of generality, one may choose $B = BB^* = \Gamma_2$.
\end{remark}

In general, it is not trivial to deduce whether $t_c(\y;Q) < +\infty$ or not, and this will be crucial in our comparison theorems. Nevertheless we have the following result in terms of the representative matrix of the Hamiltonian vector field $\vec{H}$ given by Eq.~\eqref{eq:Hamiltonianvf} (see \cite{ARS}).

\begin{theorem}\label{t:ARS}
The following dichotomy holds true for a controllable LQ optimal control system:
\begin{itemize}
\item If $\vec{H}$ has at least one odd-dimensional Jordan block corresponding to a pure imaginary eigenvalue, the number of conjugate times in	 $[0,T]$ grows to infinity for $T\to \pm\infty$.
\item If $\vec{H}$ has no odd-dimensional Jordan blocks corresponding to a pure imaginary eigenvalue, there are no conjugate times.
\end{itemize}
\end{theorem}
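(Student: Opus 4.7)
The plan is to translate the statement into the language of Lagrangian subspaces and exploit the symplectic (Williamson) normal form of the Hamiltonian matrix $\vec{H} \in \mathfrak{sp}(2n,\R)$. Set $\Lambda_0 := \{(p,x) \in \R^{2n} : x = 0\}$ and $\Lambda_t := e^{t\vec{H}}\Lambda_0$, so that $t>0$ is a conjugate time iff $\Lambda_t \cap \Lambda_0 \neq \{0\}$. The controllability hypothesis together with $BB^* \geq 0$ ensures that the curve $t \mapsto \Lambda_t$ is monotone in the Arnold sense: its velocity at $\Lambda_t$ equals, up to sign, the restriction $2H|_{\Lambda_t}$, which is positive semidefinite and, after a short time, positive definite on $\Lambda_t$. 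Each intersection of $\Lambda_t$ with $\Lambda_0$ is then isolated and contributes positively to the Maslov index of $\Lambda_t|_{[0,T]}$ relative to $\Lambda_0$, so counting conjugate times amounts to computing this Maslov index as $T \to \pm\infty$.

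Next, I would invoke Williamson's symplectic Jordan form: there exist a symplectic basis and a symplectic-orthogonal decomposition $\R^{2n} = \bigoplus_k V_k$ such that $\vec{H} = \bigoplus_k \vec{H}_k$, with each block of one of four families: (i) a real hyperbolic pair $\pm \mu$, (ii) a complex non-real quadruple $\pm \mu \pm i\omega$, (iii) a pure imaginary pair $\pm i\omega$ all of whose Jordan blocks have even size, or (iv) a pure imaginary pair $\pm i\omega$ containing a Jordan block of odd size. Direct integration of the normal form on each block shows that trajectories of types (i) and (ii) are asymptotic to an unstable subspace and do not return to $x = 0$; the higher even-size imaginary blocks of type (iii) admit a polynomial-in-$t$ description with no return (the simplest instance being the nilpotent pair $\dot x = p$, $\dot p = 0$, for which $x(t) = p_0 t$); only odd blocks (iv) produce genuine periodic returns, contributing a linear-in-$T$ term of the form $(\omega/\pi)T + O(1)$ to the Maslov index, exactly as seen in the $1 \times 1$ harmonic oscillator $H = (p^2 + \omega^2 x^2)/2$, whose zeros of $x(t)$ are at $t = k\pi/\omega$.

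Summing block contributions yields the dichotomy: with at least one type-(iv) block the Maslov index diverges linearly and conjugate times accumulate, while with none the Maslov rate vanishes. The main obstacle I expect is that $\Lambda_0$ does not in general split along the Williamson decomposition, so the block-by-block Maslov computation must be justified; moreover, ``zero Maslov rate'' must be upgraded to the much stronger conclusion ``no conjugate times at all.'' The cleanest resolution to both issues is via the Riccati equation: by Proposition~\ref{p:blowconj} adapted to the LQ setting, the absence of conjugate times is equivalent to global existence of the solution of $\dot V + A^* V + V A + Q + V BB^* V = 0$ with $\lim_{t \to 0^+} V^{-1} = 0$, and such a global solution corresponds to a graph-like $e^{t\vec{H}}$-invariant Lagrangian transverse to $\Lambda_0$. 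By the standard theory of invariant Lagrangian subspaces of Hamiltonian matrices, such an invariant complement exists (under controllability) iff $\vec{H}$ has no odd-dimensional pure imaginary Jordan block, which completes the argument.
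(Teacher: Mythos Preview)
The paper does not prove Theorem~\ref{t:ARS}: it is quoted verbatim from the external reference \cite{ARS} (``we have the following result \ldots\ (see \cite{ARS})''), so there is no in-paper proof to compare against. Your outline is in fact close in spirit to the argument in that reference, which also relies on the monotonicity of the curve $t\mapsto \Lambda_t$ in the Lagrange Grassmannian together with the symplectic (Williamson) normal form of the Hamiltonian matrix $\vec H$.

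As a self-contained proof, however, your sketch leaves the two hardest steps as assertions. First, since the Williamson splitting does not in general respect $\Lambda_0$, the block-by-block Maslov count needs an actual argument (a homotopy of the pair $(\Lambda_0,\Lambda_t)$ to a split pair, or a rotation-number computation that is basis-independent). Second, your closing sentence---that an $e^{t\vec H}$-invariant Lagrangian complement to $\Lambda_0$ exists under controllability iff there is no odd pure-imaginary Jordan block---is precisely the nontrivial content of the theorem, not a standard fact one can invoke; this is what \cite{ARS} proves. A smaller point: your illustrative example for case~(iii), $\dot x=p$, $\dot p=0$, is a size-$2$ nilpotent block at eigenvalue $0$, not a genuine $\pm i\omega$ block with $\omega\neq 0$ of even size, so the claim that the latter contribute nothing to the Maslov index still requires the explicit normal-form computation.
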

Thus, it is sufficient to put the Hamiltonian vector field $\vec{H}$ of $\mathrm{LQ}(\y;Q)$, given by
\begin{equation}
\vec{H} \simeq \begin{pmatrix}
-\Gamma_1 & -Q \\ \Gamma_2 & \Gamma_1^*
\end{pmatrix},
\end{equation}
in its Jordan normal form, to obtain necessary and sufficient condition for the finiteness of the first conjugate time.

\begin{example}\label{ex:1}
If $\y$ is the Young diagram associated with a Riemannian geodesic, with a single column with $n=\dim M$ boxes (or, equivalently, one single level with $1$ superbox), $\Gamma_1 = 0$, $\Gamma_2 = \mathbb{I}$, and $\mathrm{LQ}(\y;\k \mathbb{I})$ is given by
\begin{equation}
H(p,x) = \frac{1}{2} \left(|p|^2 +\k |x|^2\right),
\end{equation}
which is the Hamiltonian of an harmonic oscillator (for $\k >0$), a free particle (for $\k =0$) or an harmonic repulsor (for $\k <0$). Extremal trajectories satisfy $\ddot{x} + \k x = 0$. Moreover
\begin{equation}
t_c(\y;\k\mathbb{I}) = \begin{cases} \frac{\pi}{\sqrt{k}}, & \k > 0 \\ +\infty & \k \leq 0.
\end{cases}
\end{equation}
Indeed, for $\k >0$, all extremal trajectories starting from the origin are periodic, and they return to the origin at $t = \pi/\sqrt{\k}$. On the other hand, for $\k \leq 0$, all trajectories escape at least linearly from the origin, and we cannot have conjugate times (small variations of any extremal spread at least linearly for growing time). In this case, the Hamiltonian vector field $\vec{H}$ of $\mathrm{LQ}(\y;\k\mathbb{I})$ has characteristic polynomial $P(\lambda) = (\lambda^2+\k)^n$. Therefore Theorem~\ref{t:ARS} correctly gives that the first conjugate time is finite if and only if $\k>0$.
\end{example}

\begin{example}\label{ex:3}
For any Young diagram $\y$, consider the model $\mathrm{LQ}(\y;0)$. Indeed in this case all the eigenvalues of $\vec{H}$ vanish. Thus, by Theorem~\ref{t:ARS}, one has $t_c(\y;Q) = +\infty$. 
\end{example}

In the following, when considering average comparison theorems, we will consider a particular class of models, that we discuss in the following example. 
\begin{example}\label{ex:2}
Let $\y = \ytableausetup{smalltableaux}\ytableaushort{\empty}\ldots\ytableaushort{\empty}$ be a Young diagram with a single row of length $\ell$, and $Q = \diag\{\k_1,\ldots,\k_\ell\}$. We denote these special LQ models simply $\mathrm{LQ}(\k_1,\ldots,\k_\ell)$.

In the case $\ell = 2$, Theorem \ref{t:ARS} says that $t_c(\k_1,\k_2)<+\infty$ if and only if
\begin{equation}
\begin{cases}\k_1 > 0, \\ 4 k_2 >  -\k_1^2, \end{cases} \qquad \text{or} \qquad \begin{cases}\k_1 \leq 0, \\ \k_2 >0. \end{cases}
\end{equation}
In particular, by explicit integration of the Hamiltonian flow, one can compute that, if $\k_1 >0$ and $\k_2=0$, the first conjugate time of $\mathrm{LQ}(\k_1,0)$ is $t_c(\k_1,0) = 2\pi/\sqrt{\k_1}$.
\end{example}

\subsection{General microlocal comparison theorem}

We are now ready to prove the main result on estimates for conjugate times in terms of the constant curvature models $\mathrm{LQ}(\y;Q)$.

\begin{theorem}\label{t:comparison1}
Let $\gamma(t)$ be an ample, equiregular geodesic, with Young diagram $\y$. Let $\mathfrak{R}_{\gamma(t)}: T_{\gamma(t)} M \to \R$ be directional curvature in the direction of the geodesic and $t_{c}(\g)$ the first conjugate time along $\g$. Then
\begin{itemize}
\item[(i)] if $\mathfrak{R}_{\gamma(t)} \geq Q_+$ for all $t \geq 0$, then $t_c(\g) \leq t_c(\y;Q_+),$
\item[(ii)] if $\mathfrak{R}_{\gamma(t)} \leq Q_-$ for all $t \geq 0$, then  $t_{c}(\g) \geq t_{c}(\y;Q_{-})$,
\end{itemize}
where $Q_\pm :\R^n \to \R$ are some constant quadratic forms and we understand the identification of $T_{\gamma(t)} M \simeq \R^n$ through any orthonormal basis for the scalar product $\langle\cdot|\cdot\rangle_{\gamma(t)}$.
\end{theorem}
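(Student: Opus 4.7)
The plan is to reduce both statements to a single matrix Riccati comparison principle in which the only difference between the two equations is the ``potential'' term. Fix the canonical frame $\{E_{ai},F_{ai}\}$ of Theorem~\ref{p:can} along the extremal lift $\lambda(t)$ of $\g$; in the induced orthonormal basis $X_{ai}=\pi_\ast F_{ai}$ of $T_{\g(t)}M$ the directional curvature $\RR_{\g(t)}$ is represented precisely by the matrix $R(t)$ of \eqref{eq:cauchyp}, while $\Gamma_1$ and $\Gamma_2$ depend only on the Young diagram $\y$. The model $\mathrm{LQ}(\y;Q_\pm)$ shares by construction the same $\Gamma_1,\Gamma_2$, the only change being that $R(t)$ is replaced by the constant symmetric matrix $Q_\pm$. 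Proposition~\ref{p:blowconj} together with its LQ analogue then identifies $t_c(\g)$ and $t_{c}(\y;Q_\pm)$ with the blow-up times of the unique solutions $V(t)$ and $V_\pm(t)$ of the Cauchy problem
\begin{equation}
\dot V+\Gamma_1V+V\Gamma_1^\ast+R_\star(t)+V\Gamma_2V=0,\qquad \lim_{t\to 0^+}V^{-1}=0,
\end{equation}
with $R_\star\in\{R(\cdot),Q_+,Q_-\}$ respectively.

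I would then invoke the following matrix Riccati comparison principle: \emph{if $V_{(1)},V_{(2)}$ are the solutions of the above Cauchy problem corresponding to symmetric potentials $R_{(1)}(t)\geq R_{(2)}(t)$ for all $t$, then $V_{(1)}(t)\leq V_{(2)}(t)$ on the intersection of their maximal intervals of existence, and consequently the blow-up time of $V_{(1)}$ is no greater than that of $V_{(2)}$.} Choosing $(R_{(1)},R_{(2)})=(R(\cdot),Q_+)$ yields (i); choosing $(R_{(1)},R_{(2)})=(Q_-,R(\cdot))$ yields (ii).

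To prove the comparison principle, set $W:=V_{(2)}-V_{(1)}$ and subtract the two Riccati equations, factoring the quadratic difference as $V_{(2)}\Gamma_2V_{(2)}-V_{(1)}\Gamma_2V_{(1)}=V_{(2)}\Gamma_2W+W\Gamma_2V_{(1)}$. This produces the linear Sylvester-type equation
\begin{equation}
\dot W+(\Gamma_1+V_{(2)}\Gamma_2)W+W(\Gamma_1^\ast+\Gamma_2V_{(1)})=R_{(1)}-R_{(2)}\geq 0,
\end{equation}
whose associated two-sided transition operator, via Duhamel's formula, propagates non-negativity from the initial time to any later time as long as the forcing is non-negative. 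Since the blow-up of $V_{(1)}$ corresponds to its smallest eigenvalue diverging to $-\infty$, the inequality $W\geq 0$ forces that event to occur no later than the corresponding one for $V_{(2)}$.

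The main obstacle is that the initial datum is prescribed only as the singular limit $V^{-1}\to 0$: both $V(t)$ and $V_\pm(t)$ diverge as $t\to 0^+$, and $\Gamma_2$ is in general only idempotent, so one cannot naively equate values at $t=0$ to initialise the comparison. I would handle this by a short-time asymptotic analysis of the limit Cauchy problem, exploiting the well-posedness recorded in Lemma~\ref{l:limit}: the leading singular part of $V_\star(t)$ near $0$ is dictated entirely by the pair $(\Gamma_1,\Gamma_2)$ of the Young diagram, whereas the potential $R_\star$ enters only at subleading order. Consequently $W(t)\to 0$ as $t\to 0^+$ with a non-negative first non-vanishing term, and the comparison propagates forward until either side reaches its conjugate time, completing the argument.
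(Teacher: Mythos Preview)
Your overall strategy coincides with the paper's: reduce via Proposition~\ref{p:blowconj} to comparing blow-up times of the Riccati solutions $V$ and $V_\pm$ sharing the same $(\Gamma_1,\Gamma_2)$, establish the pointwise order $V_{(1)}\le V_{(2)}$ whenever $R_{(1)}\ge R_{(2)}$, and conclude. The paper handles the singular initial condition not by asymptotic expansion but by passing to the inverses $W=V^{-1}$, which solve a dual Riccati equation with honest initial datum $W(0)=0$ (Theorem~\ref{t:riccatilim}); your sketch is plausible but vaguer.

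There is, however, a genuine gap in the last step. You assert that ``the blow-up of $V_{(1)}$ corresponds to its smallest eigenvalue diverging to $-\infty$'' and use this to pass from $V_{(1)}\le V_{(2)}$ on the common interval to an ordering of blow-up times. But the direction of blow-up is not automatic: a priori a Riccati solution could escape through $+\infty$ in some direction, and then the inequality $V_{(1)}\le V_{(2)}$ tells you nothing about when $V_{(1)}$ blows up. The paper spends two lemmas on precisely this point. Lemma~\ref{l:monotone} shows that the \emph{constant-coefficient} model solution $V_{\y;Q}$ is monotone non-increasing (because $\dot W_{\y;Q}(0)=\Gamma_2\ge0$, and constant-coefficient Riccati solutions preserve the sign of their derivative), hence can only blow up to $-\infty$. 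Lemma~\ref{l:monotone2} then transfers this to the \emph{non-autonomous} solution $V$ by bounding it above, on any compact interval, by a constant-coefficient solution built from a lower bound $R(t)\ge q\mathbb{I}$. Without these two facts your implication from pointwise order to order of blow-up times is unjustified; you should either supply this argument or cite it.

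A smaller technical point: your factoring gives a Sylvester equation $\dot W+AW+WB=C$ with $A=\Gamma_1+V_{(2)}\Gamma_2$ and $B^\ast=\Gamma_1+V_{(1)}\Gamma_2\neq A$, so the two-sided Duhamel propagator $\Phi_L C\,\Phi_R^\ast$ need not take symmetric non-negative matrices to themselves. This is easily repaired by symmetrising (since $W=W^\ast$, averaging the equation with its transpose yields $\dot W+\theta W+W\theta^\ast=C$ with $\theta=\Gamma_1+\tfrac12(V_{(1)}+V_{(2)})\Gamma_2$), which is exactly the Lyapunov inequality the paper uses in Theorem~\ref{t:riccati}.
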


In particular, since $t_c(\y;0) = +\infty$ (see Example~\ref{ex:3}), we have the following corollary.
\begin{corollary}\label{t:comparison2}
Let $\gamma(t)$ be an ample, equiregular geodesic, with Young diagram $\y$. Let $\mathfrak{R}_{\gamma(t)}: T_{\gamma(t)} M \to \R$ be directional curvature in the direction of the geodesic. Then, if $\mathfrak{R}_{\gamma(t)} \leq 0$ for all $t \geq 0$, there are no conjugate points along the geodesic.
\end{corollary}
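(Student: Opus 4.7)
The plan is to derive the corollary as a direct specialization of Theorem~\ref{t:comparison1}(ii). The hypothesis $\mathfrak{R}_{\gamma(t)} \leq 0$ for every $t \geq 0$ coincides with the hypothesis of part (ii) of that theorem with the constant upper bound $Q_- := 0$ on $\R^n$. Applying the theorem immediately yields
\begin{equation}
t_c(\gamma) \;\geq\; t_c(\y; 0).
\end{equation}

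The second step is to verify that the reference model $\mathrm{LQ}(\y; 0)$ admits no conjugate time, so that the above inequality forces $t_c(\gamma) = +\infty$, which is precisely the assertion that there are no conjugate points along $\gamma$. This is the content of Example~\ref{ex:3}: the Hamiltonian vector field of $\mathrm{LQ}(\y; 0)$ is
\begin{equation}
\vec{H} \;\simeq\; \begin{pmatrix} -\Gamma_1 & 0 \\ \Gamma_2 & \Gamma_1^* \end{pmatrix},
\end{equation}
and since $\Gamma_1$ is nilpotent, its characteristic polynomial reduces to $\lambda^{2n}$; the dichotomy of Theorem~\ref{t:ARS} then excludes the presence of conjugate times.

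Since both ingredients are already established earlier in the paper, there is essentially no serious obstacle: the entire argument consists in assembling them. The only point deserving care is the application of the dichotomy of Theorem~\ref{t:ARS} at the degenerate purely imaginary eigenvalue $\lambda = 0$, where one must check that the Jordan structure of $\vec{H}$ (determined by $\Gamma_1,\Gamma_2$ and hence by the Young diagram $\y$ alone) satisfies the correct parity condition; this is handled in the cited reference \cite{ARS} and requires no sub-Riemannian input.
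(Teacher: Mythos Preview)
Your argument is correct and matches the paper's own justification exactly: the paper derives the corollary in one line by combining Theorem~\ref{t:comparison1}(ii) with $Q_- = 0$ and the fact $t_c(\y;0) = +\infty$ from Example~\ref{ex:3}. Your extra remark about the Jordan-block parity at the eigenvalue $0$ is a fair caveat, but the paper handles it the same way you do, by deferring to Theorem~\ref{t:ARS} and \cite{ARS}.
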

In other words, the first conjugate times of $\mathrm{LQ}(\y;Q)$ gives an estimate for the first conjugate time along geodesics with directional curvature $\mathfrak{R}_{\gamma(t)}$ controlled by $Q$. 
\begin{remark}\label{r:takingout}
Notice that there is no curvature along the direction of motion, that is $\mathfrak{R}_{\gamma(t)}(\dot{\gamma}(t)) = 0$. As it is well known in Riemannian geometry, it is possible to ``take out the direction of the motion'', considering the restriction of $\mathfrak{R}_{\gamma(t)}$ to the orthogonal complement of $\dot{\gamma}(t)$, with respect to $\langle\cdot|\cdot\rangle_{\gamma(t)}$, effectively reducing the dimension by one. To simplify the discussion, we do not go into such details since there is no variation with respect to the classical Riemannian case.
\end{remark}
\begin{remark}
These microlocal theorems apply very nicely to geodesics in the Heisenberg group. In this example we have both geodesics with $\mathfrak{R}_{\gamma(t)} = 0$ (the straight lines) and geodesics with $\mathfrak{R}_{\gamma(t)} > 0$ (all the others). The former do not have conjugate times (by Theorem~\ref{t:comparison2}), while the latter do all have a finite conjugate time (by Theorem~\ref{t:comparison1}). For more details see Section~\ref{s:3d}.
\end{remark}
\begin{proof}[Proof of Theorem~\ref{t:comparison1}]
By Proposition~\ref{p:blowconj}, the study of the first conjugate time is reduced to the study of the blow-up time of the solutions of the Riccati equation.

We precise the meaning of blow-up time of a quadratic form. Let $t \mapsto V(t) :\R^n \to \R$ a continuous family of quadratic forms. For any $w \in \R^n$ let $t\mapsto w^* V(t) w$. We say that $\bar{t} \in \R \cup\{\infty\}$ is a blow-up time for $V(t)$ if there exists $w \in \R^n$ such that 
\begin{equation}
\lim_{t \to \bar{t}} w^* V(t) w \to \infty.
\end{equation}
This is equivalent to ask that one of the entries of the representative matrix of $V(t)$ grows unbounded for $t \to \bar{t}$. If $\bar{t}$ is a blow-up time for $V(t)$ and, in addition, for any $w$ such that $\displaystyle\lim_{t \to \bar{t}} w^* V(t) w = \infty$ we have $\displaystyle\lim_{t \to \bar{t}} w^* V(t) w = + \infty$ (resp. $-\infty$), we write
\begin{equation}
\lim_{t\to \bar{t}} V(t) = +\infty \qquad (\text{resp} -\infty).
\end{equation}

We compare the solution of the Cauchy problem \eqref{eq:cauchyp} for the matrix $V(t)$ for our extremal:
\begin{equation}\label{eq:ricproof}
\dot{V} = - \begin{pmatrix}
\mathbb{I} & V
\end{pmatrix}
\begin{pmatrix}
R(t) & \Gamma_1 \\
\Gamma_1^* & \Gamma_2
\end{pmatrix}
\begin{pmatrix}
\mathbb{I} \\ V
\end{pmatrix}, \qquad \displaystyle\lim_{t\to 0^+} V^{-1} = 0,
\end{equation}
and the analogous solution $V_{\y;Q}$ for any normal extremal of the model $\mathrm{LQ}(\y;Q_{\pm})$:
\begin{equation}
\dot{V}_{\y;Q_\pm} = - \begin{pmatrix}
\mathbb{I} & V_{\y;Q_\pm}
\end{pmatrix}
\begin{pmatrix}
Q_\pm & \Gamma_1 \\
\Gamma_1^* & \Gamma_2
\end{pmatrix}
\begin{pmatrix}
\mathbb{I} \\ V_{\y;Q_\pm}
\end{pmatrix}, \qquad \displaystyle\lim_{t\to 0^+} V_{\y;Q_\pm}^{-1} = 0.
\end{equation}
By Lemma~\ref{l:limit} in Appendix~\ref{a:well}, both solutions are well defined and positive definite for $t>0$ sufficiently small. By hypothesis, $R(t) \geq Q_+$ (resp. $R(t) \leq Q_-$). Therefore
\begin{equation}
-\begin{pmatrix}Q_+ & \Gamma_1 \\ \Gamma_1^* & \Gamma_2 \end{pmatrix} \geq -\begin{pmatrix}R(t) & \Gamma_1 \\ \Gamma_1^* & \Gamma_2
 \end{pmatrix} \qquad
\text{resp.}\qquad
 -\begin{pmatrix}R(t) & \Gamma_1 \\ \Gamma_1^* & \Gamma_2
 \end{pmatrix} \geq -\begin{pmatrix} Q_- & \Gamma_1 \\ \Gamma_1^* & \Gamma_2 \end{pmatrix}.  
\end{equation}
Moreover, by definition, $\lim_{t\to 0^+} V_{\y;Q_\pm}^{-1}(t) =  \lim_{t\to 0^+}  V^{-1}(t) =0$. Therefore, by Riccati comparison (Theorem~\ref{t:riccatilim} in Appendix~\ref{a:ricccati}), we obtain
\begin{equation}
V(t) \leq V_{\y;Q_+}(t), \qquad \text{resp.}\qquad V(t) \geq V_{\y;Q_-}(t),
\end{equation}
for all $t>0$ such that both solutions are defined. We need the following two lemmas.
\begin{lemma}\label{l:monotone}
For any $\y$ and $Q$, the solution $V_{\y;Q}$ is monotone non-increasing.
\end{lemma}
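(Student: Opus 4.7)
The plan is to exploit the autonomy of the Riccati equation satisfied by $V_{\y;Q}$: since $\Gamma_1$, $\Gamma_2$ and $Q$ are all time-independent, any time-translate of a solution is again a solution of the same equation. Fixing $s>0$ and setting $\wt{V}(t) := V_{\y;Q}(t+s)$, I would first check that $\wt{V}$ solves
\[
\dot{\wt{V}} + \Gamma_1 \wt{V} + \wt{V}\Gamma_1^{*} + Q + \wt{V}\Gamma_2\wt{V} = 0,
\]
with the \emph{finite} initial value $\wt{V}(0) = V_{\y;Q}(s)$, on the interval $[0, t_c(\y;Q)-s)$.

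The next step is to compare $\wt{V}$ with $V_{\y;Q}$ using the Riccati comparison theorem. The key observation is that the limit initial condition $\lim_{t\to 0^{+}}V_{\y;Q}^{-1}(t) = 0$ forces $V_{\y;Q}(t)$ to blow up to $+\infty$ as a quadratic form when $t\to 0^{+}$ (explicitly, $w^{*}V_{\y;Q}(t)w \to +\infty$ for every nonzero $w\in\R^n$), whereas $\wt{V}$ remains bounded near $t=0$. Consequently, for every sufficiently small $t_0 > 0$ one has $V_{\y;Q}(t_0) \geq \wt{V}(t_0)$ in the sense of quadratic forms. Applying Riccati comparison to the two solutions of the \emph{same} Riccati equation on $[t_0, T)$ with this initial ordering yields
\[
V_{\y;Q}(t+s) = \wt{V}(t) \leq V_{\y;Q}(t)
\]
throughout the common interval of existence. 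Since $s>0$ was arbitrary, $V_{\y;Q}$ is monotone non-increasing.

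The only subtlety I anticipate is the singular behaviour of $V_{\y;Q}$ at $t=0$. One clean way around it is to apply the standard, finite-initial-condition Riccati comparison on $[t_0, T)$ for an arbitrarily small $t_0 > 0$, since the comparison there takes place between honest finite symmetric matrices; the conclusion is unaffected by letting $t_0 \to 0^{+}$. Alternatively, one can invoke directly the limit-Riccati-comparison statement (Theorem~\ref{t:riccatilim} in Appendix~\ref{a:ricccati}), viewing $V_{\y;Q}$ as the solution with limit initial condition and $\wt{V}$ as the one with genuine initial datum $V_{\y;Q}(s)$. Both routes rely only on the inequality $V_{\y;Q}(t) \geq \wt{V}(t)$ near $t=0$, which in turn is guaranteed by the blow-up of $V_{\y;Q}^{-1}$, so I do not expect any essential obstacle beyond citing the appropriate form of the comparison principle.
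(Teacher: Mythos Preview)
Your argument is correct and genuinely different from the paper's. The paper invokes a general fact about constant-coefficient symmetric Riccati equations (\cite[Thm.~4.1.8]{abou2003matrix}): any solution is monotone, and the direction is fixed by the sign of $\dot X$ at a single time. They then verify $\dot V_{\y;Q}(\eps)\leq 0$ indirectly, by passing to the inverse $W_{\y;Q}=V_{\y;Q}^{-1}$, observing that $\dot W_{\y;Q}(0)=\Gamma_2\geq 0$ so $W_{\y;Q}$ is non-decreasing, and using $\dot V=-V\dot W V$ together with $W_{\y;Q}(\eps)>0$. Your route instead exploits autonomy by comparing $V_{\y;Q}$ with its own time-shift via Theorem~\ref{t:riccati}; the singular initial condition furnishes the required ordering at small $t_0$ because $V_{\y;Q}(t)>0$ with $\lambda_{\min}(V_{\y;Q}(t))\to+\infty$ (Lemma~\ref{l:limit}), while $V_{\y;Q}(t_0+s)\to V_{\y;Q}(s)$ stays bounded. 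The advantage of your approach is that it is entirely self-contained within the paper's toolbox, needing only Theorem~\ref{t:riccati} and Lemma~\ref{l:limit}; the paper's approach is terser but outsources the core step to an external reference. One minor caveat: your alternative route through Theorem~\ref{t:riccatilim} would require $\wt V(t)>0$ for small $t>0$, i.e.\ $V_{\y;Q}(s)>0$, which is not guaranteed for arbitrary $s$; stick with the primary route via Theorem~\ref{t:riccati} at a finite $t_0$, which avoids this issue.
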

\begin{proof}[Proof of Lemma~\ref{l:monotone}]
It is a general fact that any solution of the symmetric Riccati differential equation with constant coefficients is monotone (see \cite[Thm. 4.1.8]{abou2003matrix}). In other words, for any solution $X(t)$ of a Cauchy problem with a Riccati equation with constant coefficients
\begin{equation}
\dot{X} +A^*X+XA+B+XQX = 0, \qquad X(t_0) = X_0,
\end{equation}
we have that $\dot{X} \geq 0$ (for $t\geq t_0$, where defined) if and only if $\dot{X}(t_0) \geq 0$ (true also with reversed and/or strict inequalities). Thus, in order to complete the proof of the lemma, it only suffices to compute the sign of $\dot{V}_{\y;Q}(\eps)$. This is easily done by exploiting the relationship with the inverse matrix $W_{\y;Q} = V_{\y;Q}^{-1}$. Observe that $\dot{W}_{\y;Q}(0) =\Gamma_2 \geq 0$. Then $W_{\y;Q}(t)$ is monotone non-decreasing. In particular $\dot{W}_{\y;Q}(\eps) \geq 0$. This, together with the fact that $W_{\y;Q}(\eps) > 0$ for $\eps$ sufficiently small (see Appendix~\ref{a:well}), implies that $\dot{V}_{\y;Q}(\eps) \leq 0$, and the lemma is proved.
\end{proof}
\begin{lemma}\label{l:monotone2}
If a solution $V(t)$ of the Riccati Cauchy problem~\eqref{eq:ricproof} blows up at time $\bar{t}$, then it blows up at $-\infty$, namely
\begin{equation}
\lim_{t\to \bar{t}} V(t) = -\infty.
\end{equation}
\end{lemma}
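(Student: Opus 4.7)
The plan is to rule out that $V(t)$ blows up to $+\infty$ in any eigen-direction as $t\to\bar t^-$, by exploiting the sign structure $\Gamma_2\geq 0$ in the Riccati equation
\[
\dot V = -\Gamma_1 V - V\Gamma_1^* - R(t) - V\Gamma_2 V.
\]
Once the largest eigenvalue $\lambda_{\max}(V(t))$ is proved to stay bounded above near $\bar t$, the unboundedness of $\|V(t)\|$ forces the smallest eigenvalue(s) to diverge to $-\infty$, which is exactly $\displaystyle\lim_{t\to\bar t} V(t) = -\infty$ in the sense defined just before the statement.

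The cleanest route is geometric. I would recast the solution as the curve $\mc{L}(t) = \spn\{F_{\lambda(t)} + E_{\lambda(t)} V(t)\}$ in the Lagrangian Grassmannian of $T_{\lambda(t)}(T^*M)$. A short calculation with the structural equations of Theorem~\ref{p:can} represents the intrinsic symplectic velocity of this curve, in the $F$-coordinates, by $\Gamma_2 \geq 0$; thus $\mc{L}(t)$ is a monotone non-decreasing curve in the Lagrangian Grassmannian. A classical consequence (Arnold, Agrachev--Gamkrelidze) is that in any chart, the symmetric-matrix coordinate parametrizing such a monotone curve must, at every crossing of the Maslov cycle $\{L : L\cap \ve_{\lambda(\bar t)} \neq \{0\}\}$, pass from $+\infty$ to $-\infty$ in the eigen-directions responsible for the crossing, which is exactly what we want.

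As an elementary back-up, one can argue analytically via the Hellmann--Feynman identity: for any smooth branch of eigenvalues $\lambda(t)$ of $V(t)$ with unit eigenvector $w(t)$,
\[
\dot \lambda = -\lambda\, w^*(\Gamma_1+\Gamma_1^*)w - w^* R(t) w - \lambda^2 w^* \Gamma_2 w.
\]
Whenever $w^* \Gamma_2 w \geq \eta > 0$, the term $-\eta\lambda^2$ dominates and is incompatible with $\lambda\to+\infty$; hence such a branch may only diverge to $-\infty$.

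The main obstacle is the genuinely sub-Riemannian phenomenon that $\Gamma_2$ is only semi-definite. In eigen-directions corresponding to boxes of the Young diagram outside the first column one has $w^*\Gamma_2 w = 0$, and the immediate quadratic damping fails. The remedy uses the ample, equiregular hypothesis together with the nilpotent shift structure of $\Gamma_1$: iterated $\Gamma_1$-shifts move any such $w$ into the first column, where $\Gamma_2$ is non-degenerate. A bookkeeping along the levels of the Young diagram — in the same spirit as the splitting used in the proof of Theorem~\ref{t:comparisonaverage} — then propagates the sign of the blow-up from the first column down to all of $\R^n$, completing the argument.
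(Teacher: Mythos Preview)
Your geometric picture via monotone Lagrangian curves is correct in spirit and can be made to work, but as written it is not a proof: you assert that the symplectic velocity equals $\Gamma_2$ in the $F$-chart and that the eigenvalues therefore cross the Maslov cycle from $+\infty$ to $-\infty$, yet neither claim is justified, and the sign must be checked carefully (note that in the chart used in the paper the coordinate of $\mc L(t)$ is literally $V(t)$, whose derivative is $\dot V$, not $\Gamma_2$). Your analytic back-up via Hellmann--Feynman is, as you yourself observe, incomplete on $\ker\Gamma_2$, and the proposed ``remedy'' by iterated $\Gamma_1$-shifts is only a sketch; turning it into an actual inequality for $\lambda_{\max}(V)$ is non-trivial and you have not done it.

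More importantly, you are missing the short argument the paper uses, which bypasses all of these difficulties. One does not need $V(t)$ itself to be monotone; it suffices to bound it above by something that is. Since $R(t)$ is smooth on $[0,\bar t]$, pick $q\in\R$ with $R(t)\geq q\mathbb I$ there. Riccati comparison (Theorem~\ref{t:riccatilim}) then gives $V(t)\leq V_{\y;q\mathbb I}(t)$ on the common interval of definition. But $V_{\y;q\mathbb I}$ solves a \emph{constant-coefficient} Riccati problem, and by the already-proved Lemma~\ref{l:monotone} it is monotone non-increasing. Hence $V(t)$ is bounded above on $(0,\bar t)$, and any divergent direction must go to $-\infty$. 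This two-line argument handles the degeneracy of $\Gamma_2$ for free, because the comparison and the monotonicity of the constant model are insensitive to it.
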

\begin{proof}[Proof of Lemma~\ref{l:monotone2}]
If $R(t)$ is constant, the statement is an immediate consequence of Lemma~\ref{l:monotone}. Remember that $R(t)$ is defined for all times. Then, let $q$ be the smallest eigenvalue of $R(t)$ on the interval $[0,\bar{t}]$. Indeed $R(t) \geq q \mathbb{I}$. Then, by Riccati comparison, $V(t) \leq V_{\y;q\mathbb{I}}(t)$. Since the latter is monotone non-increasing by Lemma~\ref{l:monotone}, the statement follows.
\end{proof}

Now we conclude. Case (i). In this case $R(t) \geq Q_+$. By Riccati comparison, $V(t) \leq V_{\y;Q_+}(t)$ on the interval $(0,\min\{t_c(\gamma),t_c(\y;Q_+)\})$. Assume that $t_c(\gamma) > t_c(\y;Q_+)$. Then
\begin{equation}
\lim_{t \to t_c(\y;Q_+)} V(t) \leq \lim_{t \to t_c(\y;Q_+)} V_{\y;Q_+}(t) = -\infty,
\end{equation}
which is a contradiction, then $t_c(\gamma) \leq t_c(\y;Q_+)$.

Case (ii). In this case $R(t) \leq Q_-$. By Riccati comparison, $V(t) \geq V_{\y;Q_-}$ on the interval $(0,\min\{t_c(\gamma),t_c(\y;Q_-)\})$. Assume that $t_c(\gamma) < t_c(\y;Q_-)$. Then
\begin{equation}
\lim_{t \to t_c(\gamma)} V_{\y;Q_-}(t) \leq \lim_{t \to t_c(\gamma)} V(t) = -\infty,
\end{equation}
and we get a contradiction. Thus $t_c(\gamma) \geq t_c(\y;Q_-)$. 
\end{proof}

%
\section{Average microlocal comparison theorem}\label{s:average}

In this section we prove the average version of Theorem~\ref{t:comparison1}. Recall that, with any ample, equiregular geodesic $\gamma(t)$ we associate its Young diagram $\y$. The latter is partitioned in levels, namely the sets of rows with the same length. Let $\lev_1,\ldots,\lev_\ell$ be the superboxes in some given level, of length $\ell$. The size $r$ of the level is the number of rows contained in the level (see Fig.~\ref{f:Yd3}). To the superboxes $\lev_i$ we associated the Ricci curvatures $\Ric^{\lev_i}_{\gamma(t)}$ for $i=1,\ldots,\ell$. Finally, we recall the definition anticipated in Example~\ref{ex:2}.

\begin{figure}
\centering
\begin{tikzpicture}[x=0.30mm, y=0.30mm, inner xsep=0pt, inner ysep=0pt, outer xsep=0pt, outer ysep=0pt]
\path[line width=0mm] (28.26,67.84) rectangle +(291.74,102.16);
\definecolor{L}{rgb}{0,0,0}
\definecolor{F}{rgb}{0.565,0.933,0.565}
\path[line width=0.60mm, draw=L, fill=F] (80.00,90.00) rectangle +(60.00,80.00);
\path[line width=0.60mm, draw=L] (100.00,170.00) -- (100.00,100.00);
\path[line width=0.60mm, draw=L] (100.00,170.00) -- (100.00,90.00);
\path[line width=0.60mm, draw=L] (120.00,170.00) -- (120.00,90.00);
\path[line width=0.30mm, draw=L, dash pattern=on 0.30mm off 0.50mm] (140.00,170.00) -- (160.00,170.00);
\path[line width=0.30mm, draw=L, dash pattern=on 0.30mm off 0.50mm] (140.00,90.00) -- (160.00,90.00);
\path[line width=0.60mm, draw=L, fill=F] (160.00,90.00) rectangle +(20.00,80.00);
\path[line width=0.15mm, draw=L] (80.00,150.00) -- (140.00,150.00);
\path[line width=0.15mm, draw=L] (80.00,130.00) -- (140.00,130.00);
\path[line width=0.15mm, draw=L] (80.00,110.00) -- (140.00,110.00);
\path[line width=0.15mm, draw=L] (160.00,150.00) -- (180.00,150.00);
\path[line width=0.15mm, draw=L] (160.00,130.00) -- (180.00,130.00);
\path[line width=0.15mm, draw=L] (160.00,110.00) -- (180.00,110.00);
\draw(60.00,155.00) node[anchor=base]{\fontsize{8.54}{10.24}\selectfont $\y_{a_1}$};
\draw(60.00,135.00) node[anchor=base]{\fontsize{8.54}{10.24}\selectfont $\y_{a_2}$};
\draw(60.00,95.00) node[anchor=base]{\fontsize{8.54}{10.24}\selectfont $\y_{a_r}$};
\draw(60.00,115.00) node[anchor=base]{\fontsize{8.54}{10.24}\selectfont $\vdots$};
\draw(90.00,70.00) node[anchor=base]{\fontsize{8.54}{10.24}\selectfont $\lev_1$};
\draw(110.00,70.00) node[anchor=base]{\fontsize{8.54}{10.24}\selectfont $\lev_2$};
\draw(130.00,70.00) node[anchor=base]{\fontsize{8.54}{10.24}\selectfont $\lev_3$};
\draw(170.00,70.00) node[anchor=base]{\fontsize{8.54}{10.24}\selectfont $\lev_\ell$};
\draw(150.00,70.00) node[anchor=base]{\fontsize{8.54}{10.24}\selectfont $\ldots$};
\path[line width=0.15mm, draw=L] (80.00,85.00) -- (80.00,80.00) -- (140.00,80.00) -- (140.00,85.00);
\path[line width=0.15mm, draw=L] (160.00,85.00) -- (160.00,80.00) -- (180.00,80.00) -- (180.00,85.00);
\path[line width=0.15mm, draw=L] (100.00,85.00) -- (100.00,80.00);
\path[line width=0.15mm, draw=L] (120.00,85.00) -- (120.00,80.00);
\path[line width=0.15mm, draw=L] (75.00,170.00) -- (70.00,170.00);
\path[line width=0.15mm, draw=L] (75.00,150.00) -- (70.00,150.00);
\path[line width=0.15mm, draw=L] (75.00,130.00) -- (70.00,130.00);
\path[line width=0.15mm, draw=L] (75.00,110.00) -- (70.00,110.00);
\path[line width=0.15mm, draw=L] (75.00,90.00) -- (70.00,90.00);
\path[line width=0.15mm, draw=L] (70.00,170.00) -- (70.00,130.00);
\path[line width=0.15mm, draw=L] (70.00,110.00) -- (70.00,90.00);
\path[line width=0.15mm, draw=L] (240.00,145.00) rectangle +(0.00,0.00);
\path[line width=0.30mm, draw=L, fill=F] (260.00,150.00) [rotate around={270:(260.00,150.00)}] rectangle +(40.00,40.00);
\path[line width=0.30mm, draw=L] (260.00,90.00) rectangle +(10.00,10.00);
\path[line width=0.30mm, draw=L] (260.00,100.00) rectangle +(30.00,10.00);
\path[line width=0.30mm, draw=L] (260.00,170.00) [rotate around={270:(260.00,170.00)}] rectangle +(20.00,60.00);
\path[line width=0.15mm, draw=L] (260.00,160.00) -- (320.00,160.00);
\path[line width=0.15mm, draw=L] (260.00,140.00) -- (300.00,140.00);
\path[line width=0.15mm, draw=L] (260.00,130.00) -- (300.00,130.00);
\path[line width=0.15mm, draw=L] (260.00,120.00) -- (300.00,120.00);
\path[line width=0.15mm, draw=L] (270.00,170.00) -- (270.00,100.00);
\path[line width=0.15mm, draw=L] (280.00,170.00) -- (280.00,100.00);
\path[line width=0.15mm, draw=L] (290.00,170.00) -- (290.00,110.00);
\path[line width=0.15mm, draw=L] (300.00,150.00) -- (300.00,170.00);
\path[line width=0.15mm, draw=L] (310.00,150.00) -- (310.00,170.00);
\path[line width=0.15mm, draw=L] (255.00,150.00) -- (250.00,150.00) -- (195.00,170.00) -- (190.00,170.00);
\path[line width=0.15mm, draw=L] (190.00,90.00) -- (195.00,90.00) -- (250.00,110.00) -- (255.00,110.00);
\end{tikzpicture}%
\caption{Detail of a single level of $\y$ of length $\ell$ and size $r$. It consists of the rows $\y_{a_1}, \ldots, \y_{a_r}$, each one of length $\ell$. The sets of boxes in each column are the superboxes $\lev_1,\ldots,\lev_\ell$.}\label{f:Yd3}
\end{figure}
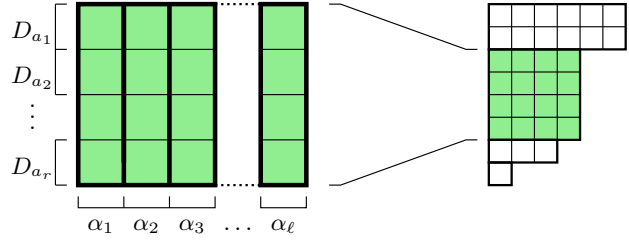

\begin{definition}
With the symbol $\mathrm{LQ}(\k_1,\ldots,\k_\ell)$ we denote the LQ model associated with the Young diagram $\y$ with a single row of length $\ell$, and with diagonal potential $Q = \mathrm{diag}(\k_1,\ldots,\k_\ell)$. With the symbol $t_c(\k_1,\ldots,\k_\ell)$ we denote the first conjugate time of $\mathrm{LQ}(\k_1,\ldots,\k_\ell)$.
\end{definition}

\begin{theorem}\label{t:comparisonaverage}
Let $\gamma(t)$ be an ample, equiregular geodesic, with Young diagram $\y$. Let $\lev_1,\ldots,\lev_\ell$ be the superboxes in some fixed level, of length $\ell$ and size $r$. Then, if
\begin{equation}
\frac{1}{r}\Ric_{\gamma(t)}^{\lev_i} \geq \k_i, \qquad \forall i=1,\ldots,\ell, \qquad \forall t \geq 0,
\end{equation}
the first conjugate time $t_c(\g)$ along the geodesic satisfies $t_c(\g) \leq t_c(\k_1,\ldots,\k_\ell)$.
\end{theorem}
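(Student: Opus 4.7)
The plan is to reduce the $n\times n$ Cauchy problem \eqref{eq:cauchyp} for $V(t)$ along the extremal to an $\ell \times \ell$ matrix Riccati inequality directly comparable with the Riccati equation of the LQ model $\mathrm{LQ}(\k_1, \ldots, \k_\ell)$, and then to invoke the Riccati comparison of Theorem~\ref{t:riccatilim}. The reduction proceeds in two steps that I call \emph{splitting} (isolating the fixed level of the Young diagram) and \emph{averaging} (taking a partial trace within the level).

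The splitting exploits that $\Gamma_1 = \Gamma_1(\y)$ and $\Gamma_2 = \Gamma_2(\y)$ are block-diagonal with respect to the decomposition of the canonical frame by rows, hence by levels. Projecting the Riccati equation to the $r\ell \times r\ell$ diagonal block $V_\lev(t) := \pi_\lev V(t)\pi_\lev^*$, the linear terms restrict exactly, while
\[
\pi_\lev (V\Gamma_2 V)\pi_\lev^* = V_\lev \Gamma_2^\lev V_\lev + \sum_{\lev'\neq \lev} V_{\lev,\lev'}\,\Gamma_2^{\lev'}\,V_{\lev,\lev'}^*,
\]
and every cross term is positive semi-definite since $\Gamma_2^{\lev'}\geq 0$. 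Dropping them gives the Riccati inequality $\dot V_\lev + \Gamma_1^\lev V_\lev + V_\lev(\Gamma_1^\lev)^* + R_\lev(t) + V_\lev \Gamma_2^\lev V_\lev \leq 0$ with the same limit initial condition. For the averaging, I identify $\R^{r\ell}\cong \R^r\otimes \R^\ell$ so that $\Gamma_1^\lev = I_r\otimes S$ and $\Gamma_2^\lev = I_r\otimes P$, with $S$ the $\ell\times\ell$ shift and $P$ the rank-one projection onto the first coordinate. Setting $W(t):=\tfrac{1}{r}\trace_{\R^r}(V_\lev(t))$, an $\ell\times\ell$ symmetric matrix, the linear terms transform into $SW$ and $WS^*$; the potential becomes $D(t):=\tfrac{1}{r}\trace_{\R^r}(R_\lev(t))$, whose diagonal entries satisfy $D_{ii}(t)=\tfrac{1}{r}\Ric_{\gamma(t)}^{\lev_i}\geq \k_i$ by hypothesis; and the Cauchy--Schwarz inequality $\trace(C)^2 \leq r\|C\|_F^2$, applied to $C$ a linear combination of the first-column $r\times r$ blocks of $V_\lev$, yields $\tfrac{1}{r}\trace_{\R^r}(V_\lev \Gamma_2^\lev V_\lev) \geq WPW$. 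Collecting,
\[
\dot W + SW + WS^* + D(t) + WPW \leq 0, \qquad \lim_{t\to 0^+} W^{-1} = 0.
\]

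To conclude, I compare $W$ with the solution $X(t)$ of the Riccati equation for $\mathrm{LQ}(\k_1,\ldots,\k_\ell)$, namely $\dot X + SX + XS^* + \diag(\k_1,\ldots,\k_\ell) + XPX = 0$ with the same limit initial condition, which blows up at $t_c(\k_1,\ldots,\k_\ell)$. An application of Theorem~\ref{t:riccatilim} together with Lemma~\ref{l:monotone2} would then force $W$ to blow up no later than $X$, giving $t_c(\g)\leq t_c(\k_1,\ldots,\k_\ell)$ via Proposition~\ref{p:blowconj}. The main obstacle I foresee is precisely this last comparison: Riccati comparison normally requires $D(t)\geq \diag(\k_1,\ldots,\k_\ell)$ as a full quadratic form on $\R^\ell$, whereas the Ricci bounds only control its diagonal entries. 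I expect this gap to be bridged by exploiting the specific single-row structure of the model, in which only the entries $W_{i,1}$ feed into the nonlinear term $WPW$, either through a refined Riccati comparison adapted to the rank-one $P$ or through a further scalar reduction generalising the one that works directly in the Riemannian case $\ell=1$.
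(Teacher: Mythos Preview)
Your overall strategy---split, average, compare---matches the paper's, but the obstacle you flag at the end is not real, and your not seeing why is the genuine gap. The averaged potential $D(t)$, whose $(i,j)$ entry is $\tfrac{1}{r}\sum_{a\in\lev}R_{ai,aj}(t)$, is in fact \emph{diagonal}. This is precisely the content of the normality condition on $R(t)$ recorded in Theorem~\ref{p:can}(iii): according to \cite{lizel}, normality forces the partial trace of $\mathfrak R_{\gamma(t)}$ viewed as a map $S^{\lev_i}_{\gamma(t)}\to S^{\lev_j}_{\gamma(t)}$ to vanish whenever $i\neq j$. Once $D(t)$ is diagonal, the scalar hypotheses $D_{ii}(t)\geq \k_i$ give $D(t)\geq\diag(\k_1,\dots,\k_\ell)$ as quadratic forms on $\R^\ell$, and the Riccati comparison with $\mathrm{LQ}(\k_1,\dots,\k_\ell)$ goes through exactly as in the proof of Theorem~\ref{t:comparison1}. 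No refined rank-one comparison or further scalar reduction is needed.

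A secondary remark on the reduction itself: the paper splits by \emph{rows} rather than by levels. Each diagonal block $V_{aa}$ satisfies an exact $\ell\times\ell$ Riccati equation with effective potential $\wt R_{aa}(t)=R_{aa}(t)+\sum_{b\neq a}V_{ab}\Gamma_2(\y_b)V_{ab}^*\geq R_{aa}(t)$; only afterwards does one average $\tfrac{1}{r}\sum_{a\in\lev}V_{aa}$ over the rows of the level. Keeping exact Riccati \emph{equations} throughout (with the slack absorbed into the potential) means Theorem~\ref{t:riccatilim} applies verbatim. Your level-first projection is morally equivalent, but the partial trace of the quadratic term then also picks up the cross-row blocks $V_{ac}$, $a\neq c$, within the level; the inequality $\tfrac{1}{r}\trace_{\R^r}(V_\lev\Gamma_2^\lev V_\lev)\geq W P W$ you need is the matrix Cauchy--Schwarz of Lemma~\ref{l:cs}, not the scalar trace inequality you invoke.
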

The hypotheses in Theorem~\ref{t:comparisonaverage} are no longer bounds on a quadratic form, but a finite number of \emph{scalar} bounds. Observe that we have one comparison theorem for each level of the Young diagram of the given geodesic.

Consider the Young diagram of any geodesic of a Riemannian structure. It consists of a single level of length $\ell =1$, with one superbox $\lev$, of size $r = n =\dim M$ and $\Ric_{\gamma(t)}^\lev = \mathrm{Ric}^\nabla(\dot{\gamma}(t))$. This, together with the computation of $t_c(\k)$ of Example~\ref{ex:1}, recovers the following well known result.
\begin{corollary}\label{c:averriem}
Let $\gamma(t)$ be a Riemannian geodesic, such that $\mathrm{Ric}^\nabla(\dot{\gamma}(t)) \geq n \k > 0$ for all $t \geq 0$. Then the first conjugate time $t_c(\g)$ along the geodesic satisfies $t_c(\g) \leq \pi/\sqrt{\k}$.
\end{corollary}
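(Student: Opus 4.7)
The plan is to derive this corollary as a direct specialization of Theorem~\ref{t:comparisonaverage} to the Riemannian setting, where everything collapses to the familiar one-dimensional picture. First I would recall, as explained after Theorem~\ref{p:can} and in Example~\ref{ex:1}, that in the Riemannian case any non-trivial geodesic $\gamma$ is ample and equiregular, with flag $\DD^1_\gamma(t) = T_{\gamma(t)}M$. The associated Young diagram $\y$ therefore consists of a single column with $n=\dim M$ boxes, i.e.\ $n$ rows all of length one. In the terminology of levels and superboxes, this means $\y$ has exactly one level, of length $\ell = 1$ and size $r = n$, containing a single superbox $\lev$ made up of all $n$ boxes.

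Next I would identify the sub-Riemannian Ricci attached to $\lev$ with the classical Ricci. Since the superbox $\lev$ fills the whole diagram, $S^{\lev}_{\gamma(t)} = T_{\gamma(t)}M$, so by the definition of the partial trace and by Lemma~\ref{l:Riemanncurv},
\begin{equation}
\Ric^{\lev}_{\gamma(t)} \;=\; \trace\!\left(\RR_{\gamma(t)}\big|_{T_{\gamma(t)}M}\right) \;=\; \trace\bigl(v\mapsto \langle R^\nabla(v,\dot\gamma)\dot\gamma\,|\,v\rangle\bigr) \;=\; \mathrm{Ric}^\nabla(\dot\gamma(t)).
\end{equation}
The hypothesis $\mathrm{Ric}^\nabla(\dot\gamma(t)) \geq n\k$ thus rewrites as
\begin{equation}
\frac{1}{r}\Ric^{\lev}_{\gamma(t)} \;=\; \frac{1}{n}\mathrm{Ric}^\nabla(\dot\gamma(t)) \;\geq\; \k \qquad \forall\,t\geq 0,
\end{equation}
which is exactly the single scalar bound required by Theorem~\ref{t:comparisonaverage} with $\ell=1$ and $\k_1=\k$.

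Applying Theorem~\ref{t:comparisonaverage} then gives $t_c(\gamma) \leq t_c(\k)$, where $t_c(\k)$ is the first conjugate time of the model $\mathrm{LQ}(\k)$, associated with a Young diagram consisting of a single box and diagonal potential $Q=\k$. To finish, I would compute this time explicitly. The model has $A=\Gamma_1^*=0$ and $BB^* = \Gamma_2 = 1$, so the Hamiltonian is
\begin{equation}
H(p,x) \;=\; \tfrac{1}{2}\bigl(p^2 + \k x^2\bigr),
\end{equation}
i.e.\ a one-dimensional harmonic oscillator, whose extremal trajectories satisfy $\ddot x + \k x = 0$. For $\k>0$, the solutions with $x(0)=0$ are $x(t)=c\sin(\sqrt{\k}\,t)$, so the first positive time at which $x$ returns to zero is $t_c(\k) = \pi/\sqrt{\k}$ (this is also what Theorem~\ref{t:ARS} predicts since the Hamiltonian vector field has characteristic polynomial $\lambda^2+\k$, with a pair of purely imaginary eigenvalues $\pm i\sqrt{\k}$, each forming a one-dimensional Jordan block). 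Combining with the bound from Theorem~\ref{t:comparisonaverage} yields $t_c(\gamma) \leq \pi/\sqrt{\k}$, which is the claim.

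The only potential obstacle is purely bookkeeping: one must correctly match the Young diagram of the Riemannian geodesic (a single column of height $n$, viewed as one level of length $1$ and size $n$) with the Young diagram of the model invoked in Theorem~\ref{t:comparisonaverage} (a single row of length $\ell=1$, i.e.\ a single box). Once this identification is in place, the corollary reduces to the explicit integration of the scalar harmonic oscillator above.
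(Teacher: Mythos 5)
Your proposal is correct and follows essentially the same route as the paper: the corollary is obtained by specializing Theorem~\ref{t:comparisonaverage} to the Riemannian Young diagram (one level of length $\ell=1$ and size $r=n$, with $\Ric^{\lev}_{\gamma(t)}=\mathrm{Ric}^\nabla(\dot\gamma(t))$) and then using the explicit value $t_c(\k)=\pi/\sqrt{\k}$ of the harmonic-oscillator model. The only cosmetic difference is that you integrate the scalar oscillator for the single-row model $\mathrm{LQ}(\k)$ directly, while the paper cites the (equivalent, decoupled) computation of Example~\ref{ex:1}; your bookkeeping of the column-versus-row diagrams matches exactly how Theorem~\ref{t:comparisonaverage} is stated.
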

Corollary~\ref{c:averriem} can be refined by taking out the direction of the motion, effectively reducing the dimension by $1$. A similar reduction can be performed in Theorem~\ref{t:comparisonaverage}, in the case of a ``Riemannian'' level of length $1$ and size $r$, effectively reducing the size of by one. We do not go into details, since such a reduction can be obtained exactly as in the Riemannian case (see \cite[Chapter 14]{villani} and also Remark~\ref{r:takingout}). 

We recall how the averaging procedure is carried out in Riemannian geometry. In this setting, one considers the average of the diagonal elements of $V(t)$, namely the trace, and employs the Cauchy-Schwarz inequality to obtain a scalar Riccati equation for $\trace V(t)$, where the curvature matrix is replaced by its trace, namely the Ricci curvature along the geodesic. On the other hand, in the sub-Riemannian setting, non-trivial terms containing matrices $\Gamma_1(\y)$ and $\Gamma_2(\y)$ appear in the Riccati equation. These terms, upon tracing, cannot be controlled in terms of $\trace V(t)$ alone. The failure of such a procedure in genuine sub-Riemannian manifolds is somehow expected: different directions have a different ``behaviour'', according to the structure of the Young diagram, and it makes no sense to average over all of them. The best we can do is to average among the directions corresponding to the rows of $\y$ that have the same length, namely rows in the same level. The proof of Theorem~\ref{t:comparisonaverage} is based on the following two steps.

\paragraph{(i) Splitting:} The idea is to split the Cauchy problem
\begin{equation}
\dot{V}  + \Gamma_1 V + V \Gamma_1^* + R(t) + V \Gamma_2 V = 0, \qquad \lim_{t\to 0^+}V^{-1} = 0,
\end{equation}
in several, lower-dimensional Cauchy problems for particular blocks of $V(t)$. In these equations, only some blocks of $R(t)$ appear. In particular, we obtain one Riccati equation for each row of the Young diagram $\y$, of dimension equal to the length of the row. The blow-up of a block of $V(t)$ imples a blow-up time for $V(t)$. Therefore, the presence of finite blow-up time in any one of these lower dimensional blocks implies a conjugate time for the original problem.

\paragraph{(ii) Tracing:} After the splitting step, we sum the Riccati equations corresponding to the rows with the same length, since all these equations are, in some sense, compatible (they have the same $\Gamma_1, \Gamma_2$ matrices). In the Riemannian case, this procedure leads to a single, scalar Riccati equation. In the sub-Riemannian case, we obtain one Riccati equation for each level of the Young diagram, of dimension equal to the length $\ell$ of the level. In this case the curvature matrix is replaced by a diagonal matrix, whose diagonal elements are the Ricci curvatures of the superboxes $\lev_1,\ldots,\lev_\ell$ in the given level. This leads to a finite number of scalar conditions.

\begin{proof}[Proof of Theorem~\ref{t:comparisonaverage}]
We split the blocks of the Riccati equation corresponding to the rows of the Young diagram $\y$, with $k$ rows $\y_1,\ldots,\y_k$, of length $n_1,\ldots,n_k$. Recall that the matrices $\Gamma_1(\y)$, $\Gamma_2(\y)$, defined in Eqs.~\eqref{eq:G1}-\eqref{eq:G2}, are $n\times n$ block diagonal matrices
\begin{equation}
\Gamma_i(\y) := \begin{pmatrix} 
\Gamma_i(\y_1) &  &   \\
 &  \ddots & \\
 &   & \Gamma_i(\y_k)
\end{pmatrix}, \qquad i =1,2,
\end{equation}
the $a$-th block being the $n_a\times n_a$ matrices
\begin{equation}\label{eq:Gamma}
\Gamma_1(\y_a) := \begin{pmatrix}
0 & \mathbb{I}_{n_a-1} \\
0 & 0
\end{pmatrix} , 
\qquad \Gamma_2(\y_a) := \begin{pmatrix}
1 & 0 \\
0 & 0_{n_a-1}
\end{pmatrix},
\end{equation}
where $\mathbb{I}_{m}$ is the $m \times m$ identity matrix and $0_{m}$ is the $m \times m$ zero matrix. Consider the maximal solution of the Cauchy problem
\begin{equation}
\dot{V}  + \Gamma_1 V + V \Gamma_1^* + R(t) + V \Gamma_2 V = 0, \qquad \displaystyle \lim_{t\to 0^+}V^{-1}  =0.
\end{equation}
The blow-up of a block of $V(t)$ implies a finite blow-up time for the whole matrix, hence a conjugate time. Thus, consider $V(t)$ as a block matrix. In particular, in the notation of Sec.~\ref{s:Jac}, the block $ab$, denoted $V_{ab}(t)$ for $a,b=1,\ldots,k$, is a $n_a\times n_b$ matrix with components $V_{ai,bj}(t)$, $i=1,\ldots,n_a$, $j=1,\ldots,n_b$. Let us focus on the diagonal blocks
\begin{equation}
V(t) = \begin{pmatrix}
V_{11}(t)   &   & * \\
 &   \ddots &  \\
* &   & V_{kk}(t) 
\end{pmatrix}.
\end{equation}
Consider the equation for the $a$-th block on the diagonal, which we call $V_{aa}(t)$, and is a $n_a\times n_a$ matrices with components $V_{ai,aj}(t)$, $i,j=1,\ldots,n_a$. We obtain
\begin{equation}
\dot{V}_{aa} + \Gamma_1 V_{aa} + V_{aa}\Gamma_1^* + \wt{R}_{aa}(t) + V_{aa} \Gamma_2 V_{aa} = 0,
\end{equation}
where $\Gamma_i = \Gamma_i(\y_a)$, for $i=1,2$ are the matrix in Eq.~\eqref{eq:Gamma}, i.e. the $a$-th diagonal blocks of the matrices $\Gamma_i(\y)$. Moreover
\begin{equation}
\wt{R}_{aa}(t) = R_{aa}(t) + \sum_{b\neq a} V_{ab}(t) \Gamma_2(\y_b) V_{ba}(t).
\end{equation}
The ampleness assumption implies the following limit condition for the block $V_{aa}$.
\begin{lemma}
$\displaystyle \lim_{t \to 0^+} (V_{aa})^{-1} = 0$. 
\end{lemma}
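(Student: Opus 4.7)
The plan is to reduce the claim to the positive-definiteness of the full matrix $V(t)$ for small $t>0$, which is the content of the well-posedness Lemma in Appendix~\ref{a:well} and is exactly the step in which ampleness of the geodesic is used. Granted this, from the hypothesis $\displaystyle\lim_{t\to 0^+} V(t)^{-1} = 0$, viewed as an entry-wise limit of matrices, the symmetric positive definite matrix $V(t)^{-1}$ has operator norm tending to zero; reciprocally, the smallest eigenvalue of $V(t)$ satisfies $\lambda_{\min}(V(t)) = 1/\lambda_{\max}(V(t)^{-1}) \to +\infty$ as $t\to 0^+$.

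Next I would observe that $V_{aa}(t)$ is the compression of $V(t)$ to the coordinate subspace associated with the $a$-th row of $\y$. Writing $\iota_a : \R^{n_a}\hookrightarrow \R^n$ for the inclusion, one has $V_{aa}(t) = \iota_a^{*} V(t)\, \iota_a$; hence Cauchy interlacing, or simply the estimate
$$
x^{*} V_{aa}(t) x = (\iota_a x)^{*} V(t)\,(\iota_a x) \geq \lambda_{\min}(V(t))\,\|x\|^2, \qquad x \in \R^{n_a},
$$
yields the matrix inequality $V_{aa}(t) \geq \lambda_{\min}(V(t))\,\mathbb{I}_{n_a}$. Since the right-hand side diverges as $t\to 0^+$, this immediately gives that $V_{aa}(t)$ is invertible for small $t>0$ and that $V_{aa}(t)^{-1}\to 0$, which is the desired conclusion.

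The only real obstacle is therefore the positive-definiteness of $V(t)$ near $0$: without it, principal blocks of $V(t)$ could fail to be invertible, and the minimax lower bound above would be useless. Since however ampleness of $\gamma$ forces $V(t)>0$ for small $t>0$ through the referenced appendix lemma, the present lemma becomes a purely linear-algebra corollary and requires no further analysis of the structural equations or of the fine asymptotics dictated by the Young diagram.
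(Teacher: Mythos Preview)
Your argument is correct and rests on the same decisive ingredient as the paper's proof: the positive definiteness of $V(t)$ for small $t>0$, drawn from Lemma~\ref{l:limit} in Appendix~\ref{a:well}. The only difference is in the linear-algebra step. The paper uses the Schur complement identity $(V^{-1})_{aa} = \bigl(V_{aa} - V_{a0}V_{00}^{-1}V_{a0}^*\bigr)^{-1}$ to obtain directly the matrix inequality $0 < (V_{aa})^{-1} \leq (V^{-1})_{aa}$, and then lets the right-hand side tend to zero entry-wise. You instead pass through eigenvalues, bounding $V_{aa}(t) \geq \lambda_{\min}(V(t))\,\mathbb{I}_{n_a}$ via Cauchy interlacing and using $\lambda_{\min}(V(t)) = \lambda_{\max}(V(t)^{-1})^{-1} \to +\infty$. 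Both inequalities are standard consequences of $V(t)>0$; the paper's Schur-complement bound is block-wise sharper, but either is more than enough here, and neither requires any further input from the Young diagram or the structural equations.
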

\begin{proof}
Without loss of generality, consider the first block $V_{11}$. We partition the matrix $V$ and $W = V^{-1}$ in blocks as follows
\begin{equation}
V = \begin{pmatrix}
V_{11} & V_{10} \\
V_{10}^* & V_{00}
\end{pmatrix},
\end{equation}
where the index ``0'' collects all indices different from 1. By block-wise inversion, $W_{11} = (V^{-1})_{11} = ( V_{11} - V_{10}V_{00}^{-1} V_{10}^*)^{-1}$. By Lemma~\ref{l:limit} in Appendix~\ref{a:well}, for small $t>0$, $V(t) > 0$, hence $V_{00} >0$ as well. Therefore $V_{11} - (W_{11})^{-1} = V_{10}V_{00}^{-1}V_{10}^*\geq 0$. Thus $V_{11} \geq (W_{11})^{-1} >0$ and, by positivity, $0 < (V_{11})^{-1} \leq W_{11}$ for small $t>0$. By taking the limit for $t\to 0^+$, since $W_{11} \to 0$, we obtain the statement.
\end{proof}
We proved that the block $V_{aa}(t)$ is solution of the Cauchy problem
\begin{equation}\label{eq:pretrace}
\dot{V}_{aa}+ \Gamma_1 V_{aa} + V_{aa}\Gamma_1^* +\wt{R}_{aa}(t) + V_{aa} \Gamma_2 V_{aa} = 0, \qquad \displaystyle \lim_{t\to 0^+} (V_{aa})^{-1} = 0.
\end{equation}
The crucial observation is the following (see \cite{mcpcontact} for the original argument in the contact case with symmetries). Since $\Gamma_2(\y_b) \geq 0$ and $V_{ba} = V_{ab}^*$ for all $a,b=1,\ldots,k$, we obtain 
\begin{equation}\label{eq:boundRtilde}
\wt{R}_{aa}(t) = R_{aa}(t) + \sum_{b\neq a} V_{ab}(t) \Gamma_2(\y_b) V_{ab}^*(t) \geq R_{aa}(t).
\end{equation}

We now proceed with the second step of the proof, namely tracing over the level. Consider Eq.~\eqref{eq:pretrace} for the diagonal blocks of $V(t)$, with $\wt{R}_{aa}(t) \geq R_{aa}(t)$. Now, we average over all the rows in the same level $\lev$. Let $\ell$ be the length of the level, namely $\ell = n_a$, for any row $\y_{a_1},\ldots, \y_{a_r}$ in the given level (see Fig.~\ref{f:Yd3}). Then define the $\ell \times \ell$ symmetric matrix:
\begin{equation}
V_\lev := \frac{1}{r}\sum_{a \in \lev} V_{aa},
\end{equation}
where the sum is taken on the indices $a \in \{a_1,\ldots,a_r\}$ of the rows $\y_a$ in the given level $\lev$. Once again, the blow-up of $V_\lev(t)$ implies also a blow-up for $V(t)$. A computation shows that $V_\lev$ is the solution of the following Cauchy problem
\begin{equation}
\dot{V}_\lev +\Gamma_1 V_\lev + V_\lev\Gamma_1^* + R_\lev(t) + V_\lev \Gamma_2 V_\lev = 0, \qquad \lim_{t\to 0^+} V_\lev = 0,
\end{equation}
where $\Gamma_2 = \Gamma_2(\y_{a})$ for any $a \in \lev$, and the $\ell \times \ell$ matrix $R_\lev(t)$ is defined by
\begin{equation}
\begin{split}
R_\lev(t) :=\, &\frac{1}{r}\sum_{a \in \lev} \wt{R}_{aa}(t) + \frac{1}{r}\sum_{a \in \lev} V_{aa}\Gamma_2V_{aa} - V_\lev \Gamma_2 V_\lev\\
=\, &\frac{1}{r}\sum_{a \in \lev} \wt{R}_{aa}(t) + \frac{1}{r}\left[\sum_{a \in \lev} (V_{aa}\Gamma_2)(V_{aa}\Gamma_2)^* - \frac{1}{r}\left(\sum_{a \in \lev} V_{aa}\Gamma_2\right)\left(\sum_{a \in \lev} V_{aa}\Gamma_2\right)^*\right].
\end{split}
\end{equation}
The key observation is that the term in square brackets is non-negative, as a consequence of the following lemma, whose proof is in Appendix~\ref{a:cs}.
\begin{lemma}\label{l:cs}
Let $\{X_a\}_{a=1}^r$, $\{Y_a\}_{a=1}^r$ be two sets of $\ell \times \ell$ matrices. Then
\begin{equation}\label{eq:cs}
\left(\sum_{a=1}^r X_a^* Y_a\right)\left(\sum_{b=1}^r X_b^* Y_b\right)^* \leq \left\lVert \sum_{a=1}^r Y_a^* Y_a \right\rVert \sum_{b=1}^r X_b^* X_b.
\end{equation}
Here $\lVert\cdot\rVert$ denotes the operator norm.
\end{lemma}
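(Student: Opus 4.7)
The plan is to recognize \eqref{eq:cs} as a matrix-valued Cauchy--Schwarz inequality and prove it via a block-stacking trick. First I would introduce the $r\ell\times\ell$ block column matrices
\[
\mathbf{X} := \begin{pmatrix} X_1 \\ \vdots \\ X_r \end{pmatrix}, \qquad \mathbf{Y} := \begin{pmatrix} Y_1 \\ \vdots \\ Y_r \end{pmatrix},
\]
so that $\mathbf{X}^*\mathbf{X} = \sum_a X_a^* X_a$, $\mathbf{Y}^*\mathbf{Y} = \sum_a Y_a^* Y_a$, and $\mathbf{X}^*\mathbf{Y} = \sum_a X_a^* Y_a$. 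In these terms the inequality \eqref{eq:cs} is equivalent to the compact statement
\[
\mathbf{X}^*\mathbf{Y}\mathbf{Y}^*\mathbf{X} \ \leq\ \lVert \mathbf{Y}^*\mathbf{Y}\rVert \, \mathbf{X}^*\mathbf{X}.
\]

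The key estimate is then the spectral bound $\mathbf{Y}\mathbf{Y}^* \leq \lVert\mathbf{Y}\mathbf{Y}^*\rVert\,\mathbb{I}_{r\ell}$, valid for any positive semi-definite operator by the spectral theorem, combined with the standard identity $\lVert\mathbf{Y}\mathbf{Y}^*\rVert = \lVert\mathbf{Y}^*\mathbf{Y}\rVert$ (the two matrices share the same non-zero spectrum). Sandwiching the spectral bound between $\mathbf{X}^*$ on the left and $\mathbf{X}$ on the right would yield
\[
\mathbf{X}^*\mathbf{Y}\mathbf{Y}^*\mathbf{X} \ \leq\ \lVert\mathbf{Y}\mathbf{Y}^*\rVert\,\mathbf{X}^*\mathbf{X} \ =\ \lVert\mathbf{Y}^*\mathbf{Y}\rVert\,\mathbf{X}^*\mathbf{X},
\]
which is exactly the required conclusion. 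Equivalently, one may test against an arbitrary $v\in\R^\ell$, set $w:=\mathbf{X}v\in\R^{r\ell}$, and observe that $v^*\mathbf{X}^*\mathbf{Y}\mathbf{Y}^*\mathbf{X}v = \lVert\mathbf{Y}^* w\rVert^2 \leq \lVert\mathbf{Y}^*\mathbf{Y}\rVert\,\lVert w\rVert^2 = \lVert\mathbf{Y}^*\mathbf{Y}\rVert\,v^*\mathbf{X}^*\mathbf{X}v$.

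There is no substantive obstacle in this argument: once the block-column reformulation is in place, the inequality reduces to two elementary facts of linear algebra (the operator-norm bound for PSD matrices and the coincidence of the non-zero spectra of $AA^*$ and $A^*A$). The only point requiring a minimum of care is the bookkeeping with adjoints and the verification that the operator norm appearing on the right is indeed the one advertised in the statement, which is handled by the identity $\lVert\mathbf{Y}\mathbf{Y}^*\rVert = \lVert\mathbf{Y}^*\mathbf{Y}\rVert$.
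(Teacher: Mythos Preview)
Your argument is correct, but it proceeds along a genuinely different route from the paper's proof. The paper tests the inequality against an arbitrary $v\in\R^\ell$, then introduces an auxiliary unit vector $u$ realizing the norm $\lVert\sum_a Y_a^* X_a v\rVert$, and applies the \emph{scalar} Cauchy--Schwarz inequality to the sum $\sum_a (Y_a u)^*(X_a v)$ before bounding $\sum_a\lVert Y_a u\rVert^2$ by the operator norm. Your block-stacking reformulation bypasses both the auxiliary vector and the scalar Cauchy--Schwarz step: once the inequality is rewritten as $\mathbf{X}^*\mathbf{Y}\mathbf{Y}^*\mathbf{X}\leq\lVert\mathbf{Y}^*\mathbf{Y}\rVert\,\mathbf{X}^*\mathbf{X}$, it follows in one line from the spectral bound $\mathbf{Y}\mathbf{Y}^*\leq\lVert\mathbf{Y}\mathbf{Y}^*\rVert\,\mathbb{I}$ and the congruence $A\leq B\Rightarrow C^*AC\leq C^*BC$. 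Your approach is shorter and makes the structure of the inequality more transparent; the paper's has the minor advantage of remaining entirely in $\ell\times\ell$ objects and using only the classical Cauchy--Schwarz, which matches the remark immediately following the lemma about recovering the scalar case when $\ell=1$.
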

\begin{remark}
Lemma~\ref{l:cs} is a generalisation of the Cauchy-Schwarz inequality, in which the scalar product in $\R^r$ is replaced by a non-commutative product $\odot : \mathrm{Mat}(\ell)^r \times \mathrm{Mat}(\ell)^r \to \mathrm{Mat}(\ell)$,
such that, if $X = \{X_a\}_{a=1}^r$, $Y= \{Y_a\}_{a=1}^r$, the product $X \odot Y := \sum_{a=1}^r X^*_a Y_a$. Eq.~\eqref{eq:cs} becomes
\begin{equation}
(X\odot Y)(X\odot Y)^* \leq  \lVert Y\odot Y\rVert X\odot X.
\end{equation}
Then the l.h.s. of Eq.~\ref{eq:cs} is just the ``square of the scalar product''. For $\ell = 1$, we recover the classical Cauchy-Schwarz inequality.
\end{remark}
We apply Lemma~\ref{l:cs} to $X_a = \Gamma_2 V_{aa}$ and $Y_{a} = \Gamma_2$, for $a \in \lev = \{a_1,\ldots,a_r\}$. We obtain
\begin{equation}
\sum_{a\in \lev} (V_{aa}\Gamma_2)(V_{aa}\Gamma_2)^* - \frac{1}{r}\left(\sum_{a\in \lev} V_{aa}\Gamma_2\right)\left(\sum_{a\in \lev} V_{aa}\Gamma_2\right)^* \geq 0,
\end{equation}
which implies, together with Eq.~\eqref{eq:boundRtilde}
\begin{equation}\label{eq:almostdone}
R_\lev(t) \geq \frac{1}{r} \sum_{a \in \lev} \wt{R}_{aa}(t) \geq \frac{1}{r}\sum_{a \in \lev} R_{aa}(t).
\end{equation}
Notice that, the $ij$-th component of the sum in the r.h.s. of Eq.~\eqref{eq:almostdone} is precisely $\frac{1}{r}\sum_{a\in\lev}R_{ai,aj}(t)$, where $i,j=1,\ldots,\ell$. Thus, for any two fixed indices $i,j$ we are considering, in coordinates, the trace of the restriction $\mathfrak{R}_{\gamma(t)}: S^{\lev_i}_{\gamma(t)} \to S^{\lev_j}_{\gamma(t)}$, written in terms of any orthonormal basis for $(T_{\gamma(t)}M,\langle\cdot|\cdot\rangle_{\gamma(t)})$. The matrix $R(t)$ is normal (see Theorem~\ref{p:can}). Thus, according to \cite{lizel}, such a trace is always zero, unless $i=j$. Thus only the diagonal elements are non-vanishing and
\begin{equation}
\frac{1}{r}\sum_{a \in \lev} R_{aa}(t) = \frac{1}{r} \begin{pmatrix}
\Ric_{\gamma(t)}^{\lev_1} &   & 0 \\
 & \ddots &  \\
0 &   & \Ric_{\gamma(t)}^{\lev_\ell} \\
\end{pmatrix}.
\end{equation}
Thus, for any level $\lev$, the average over the level $V_\lev$ satisfies the $\ell \times \ell$ matrix Riccati equation
\begin{equation}
\dot{V}_\lev + \Gamma_1 V_\lev + V_\lev \Gamma_1^* + R_\lev(t) + V_\lev \Gamma_2 V_\lev = 0, \qquad \lim_{t \to 0^+} V_\lev^{-1} = 0,
\end{equation}
and, under our hypotheses, $R_\lev(t) \geq \diag\{\k_1,\ldots,\k_\ell\}$. Therefore, we proceed as in the proof of Theorem~\ref{t:comparison1}, with $\diag\{\k_1,\ldots,\k_\ell\}$ in place of $Q_+$, and we obtain the statement.
\end{proof}

\section{A sub-Riemannian Bonnet-Myers theorem}\label{s:bm}

As an application of Theorem~\ref{t:comparisonaverage}, we prove a sub-Riemannian analogue of the Bonnet-Myers theorem (see Theorem~\ref{t:bm-riem-intro} for the classical statement).


In order to globalize the previous results, we need to consider that different geodesics, even starting at the same point, may have different growth vectors. It turns out that the components of the growth vector are computed as ranks of matrices whose entries are polynomial functions of the covector $\lambda(t)$ associated with the given geodesic $\gamma(t)$. This is a direct consequence of Definition~\ref{d:flag} and the fact that the sub-Riemannian Hamiltonian is fiber-wise polynomial (actually, quadratic). It follows that, for any $x \in M$, the growth vector $\mathcal{G}_\gamma(0)$, seen as a function of the initial covector, is constant on an open Zariski subset $A_{x} \subseteq T_{x}^*M$, where it attains its (member-wise) maximal value, given by the \emph{maximal growth vector}
\begin{equation}
\mathcal{G}_x:= \{k_1(x),\ldots,k_m(x)\}, \qquad k_i(x) := \max_{\lambda\in T_x^*M} \dim\DD_\gamma^i(0).
\end{equation}
Any sub-Riemannian structure has ample geodesics starting from any given point, thus $A_x$ is not-empty for every $x \in M$ (see~\cite[Sec. 5.2]{curvature} for a proof). Moreover, all the functions $x \mapsto k_i(x)$ are bounded, lower semi-continuous with integer values. Thus, the set $\Omega\subseteq M$ of points such that $\mathcal{G}_x$ is locally constant is open and dense. 

As a consequence, the generic normal geodesic starting at $x \in \Omega$ (``generic'' means with initial covector in $A_x$) is ample and equiregular, at least when restricted to a sufficient short segment. We call $\y_x$ the Young diagram of the generic normal geodesic starting at $x$. The Young diagram $\y_x$ is ``locally constant'', in the sense that for any $x \in \Omega$ there exists an open set $U \subseteq \Omega$ such that for all $y \in U$ we have $\y_y = \y_x$.

The structure of $\Omega$ may be complicated, and a geodesics starting from $x \in \Omega$ may cross regions where $\y_x$ has different shapes. To avoid such pathological situations, we make the following assumption:
\begin{itemize}
\item[($\star$)] $\Omega = M$ and the Young diagram $\y_{x}$ is constant.
\end{itemize}
This is equivalent to the existence of a fixed Young diagram $\y$ such that, for all $x \in M$, the generic normal geodesic (i.e. with initial covector in $A_x \subseteq T_x^*M$) is ample, equiregular, with the same Young diagram $\y$. This assumption is satisfied, for instance, by any slow-growth distribution, a large class of sub-Riemannian structures including any contact, quasi-contact, fat, Engel, Goursat-Darboux distributions (see \cite[Sec. 5.5]{curvature}). Moreover, this assumption is satisfied by all left-invariant structures on Lie groups and, more generally, sub-Riemannian homogeneous spaces. 

Under the assumption $(\star)$, with a generic geodesic $\gamma(t)$ we can associate the directional curvature $\mathfrak{R}_{\gamma(t)} : T_{\gamma(t)} M \to \R$ and the corresponding Ricci curvatures $\Ric_{\gamma(t)}^\alpha$, one for each superbox $\lev$ in $\y$.

\begin{theorem}\label{t:bonnetmyers}
Let $M$ be a complete, connected sub-Riemannian manifold satisfying ($\star$). Assume that there exists a level $\alpha$ of length $\ell$ and size $r$ of the Young diagram $\y$ and constants $\k_1,\ldots,\k_\ell$ such that, for any length parametrized geodesic $\gamma(t)$
\begin{equation}
\frac{1}{r}\Ric^{\alpha_i}_{\gamma(t)} \geq \k_i, \qquad \forall i =1,\ldots,\ell, \qquad \forall t \geq 0.
\end{equation}
Then, if the polynomial
\begin{equation}
P_{\k_1,\ldots,\k_\ell}(x) := x^{2\ell} - \sum_{i=0}^{\ell -1} (-1)^{\ell-i}\k_{\ell -i} x^{2i}
\end{equation}
has at least one simple purely imaginary root, the manifold is compact, has diameter not greater than $t_c(\k_1,\ldots,\k_\ell) < +\infty$. Moreover, its fundamental group is finite.
\end{theorem}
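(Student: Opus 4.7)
I would follow the Riemannian Bonnet--Myers strategy, split into three steps: (i) show the polynomial condition makes the model conjugate time $t_c(\k_1,\ldots,\k_\ell)$ finite; (ii) bound the diameter of $M$ using Theorem~\ref{t:comparisonaverage}; (iii) conclude compactness of $M$ and finiteness of $\pi_1(M)$ via the universal cover.

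\textbf{Step 1 (model conjugate time).} For the model $\mathrm{LQ}(\k_1,\ldots,\k_\ell)$, $\Gamma_1$ is the $\ell\times\ell$ nilpotent shift and $\Gamma_2 = e_{11}$, so the Hamiltonian system $\dot{x}=\partial_p H$, $\dot{p}=-\partial_x H$ can be reduced by elimination to a scalar $2\ell$-th order ODE for $y:=x_\ell$. A direct induction identifies its characteristic polynomial with $P_{\k_1,\ldots,\k_\ell}(\lambda)$, so (up to a nonzero factor) this is also the characteristic polynomial of the Hamiltonian matrix of $\vec{H}$ in Eq.~\eqref{eq:Hamiltonianvf}. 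A simple purely imaginary root is a $1$-dimensional (hence odd) Jordan block attached to a purely imaginary eigenvalue, so Theorem~\ref{t:ARS} gives $t_c := t_c(\k_1,\ldots,\k_\ell) < +\infty$.

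\textbf{Step 2 (diameter bound).} By assumption ($\star$), for every $x\in M$ the set $A_x\subset T_x^*M$ of initial covectors yielding ample equiregular geodesics with Young diagram $\y$ is Zariski-open, hence dense. For any $\lambda_0 \in A_x$ the associated length parametrized geodesic $\gamma$ satisfies the hypotheses of Theorem~\ref{t:comparisonaverage}, so its first conjugate time is at most $t_c$, and being strongly normal it ceases to be length-minimizing beyond $t_c$. Now take $x,y\in M$: by completeness and the sub-Riemannian Hopf--Rinow theorem there exists a length parametrized minimizing geodesic $\eta:[0,d(x,y)]\to M$ joining them. By density of $A_x$ and continuity of $\EXP_x$ and $d$, I would approximate $\eta$ by a family of ample equiregular normal geodesics $\gamma_n$ starting at $x$, with endpoints $\gamma_n(\ell_n)\to y$ and $\ell_n \to d(x,y)$; the bound $\ell_n \leq t_c$ passes to the limit and yields $d(x,y)\leq t_c$. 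Hence $\diam(M)\leq t_c < +\infty$.

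\textbf{Step 3 (compactness and fundamental group).} Completeness together with a finite diameter bound implies, via sub-Riemannian Hopf--Rinow, that closed metric balls are compact; since $M$ is itself such a ball, it is compact. Lifting the sub-Riemannian structure to the universal cover $\pi:\wt{M}\to M$, the lifted structure is still complete and all the hypotheses of the theorem (locality of $(\star)$, of the Young diagram, and of the Ricci bounds) transfer to $\wt{M}$. Applying Steps 1--2 to $\wt{M}$ yields $\diam(\wt{M})\leq t_c<+\infty$, so $\wt{M}$ is compact. Since $\pi_1(M)$ acts freely and properly discontinuously on $\wt{M}$ by deck transformations, $|\pi_1(M)| < +\infty$.

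\textbf{Main obstacle.} The delicate point is the approximation in Step 2: a minimizer produced by sub-Riemannian Hopf--Rinow need not be normal, nor ample, nor equiregular with Young diagram $\y$, so Theorem~\ref{t:comparisonaverage} does not apply to it directly. One must carefully combine the Zariski-density of $A_x$ with the continuity of the sub-Riemannian exponential and distance to propagate the bound $\ell_n\leq t_c$ to the minimizer as a limit; strictly abnormal minimizers, if present, would obstruct this step and require a separate argument. The explicit identification of the characteristic polynomial in Step 1 is elementary but worth checking carefully against the sign conventions of $P_{\k_1,\ldots,\k_\ell}$.
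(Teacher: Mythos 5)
Your Steps 1 and 3 are sound and essentially coincide with the paper's argument: the identification of the characteristic polynomial of the Hamiltonian matrix of $\mathrm{LQ}(\k_1,\ldots,\k_\ell)$ with $P_{\k_1,\ldots,\k_\ell}$ (which you work out, while the paper only invokes Theorem~\ref{t:ARS}), and the standard universal-cover argument for compactness and finiteness of $\pi_1(M)$.

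The genuine gap is in Step 2, and it is exactly the point you flag as the ``main obstacle'' but do not resolve. Knowing that $A_x \subseteq T_x^*M$ is dense only gives you a large supply of ample, equiregular geodesics; Theorem~\ref{t:comparisonaverage} bounds their \emph{conjugate} time by $t_c(\k_1,\ldots,\k_\ell)$, but this yields a bound on distance only through the implication ``minimizing $\Rightarrow$ no interior conjugate point $\Rightarrow$ length $\leq t_c$''. Your approximating geodesics $\gamma_n$ with initial covector in $A_x$ have no reason to be minimizing up to time $\ell_n$, so the inequality $\ell_n \leq t_c$ that you want to pass to the limit is simply not available; and if the Hopf--Rinow minimizer $\eta$ is strictly abnormal it does not even live in the range of $\EXP_x$, so the approximation scheme has no starting point. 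Nothing in continuity of $\EXP_x$ and $d$ repairs this: a non-minimizing $\gamma_n$ longer than $t_c$ produces no contradiction. The paper closes this hole with an extra ingredient you do not use: the regularity theorem (quoted from \cite{agrachevsmooth}, see also \cite[Thm.~5.8]{curvature}) that the set $\Sigma_{x_0}$ of points reached from $x_0$ by a \emph{unique minimizing} geodesic which is strictly normal and free of conjugate points is open and dense, and $\EXP_{x_0}$ is a diffeomorphism from $\overline{\Sigma}_{x_0}=\EXP_{x_0}^{-1}(\Sigma_{x_0})$ onto it. Intersecting with the Zariski-open set $A_{x_0}$ gives a dense set $\Sigma'_{x_0}$ of points joined to $x_0$ by minimizing, ample, equiregular geodesics with Young diagram $\y$; for these, Theorem~\ref{t:comparisonaverage} plus loss of optimality after the first conjugate point gives $d(x_0,x)\leq t_c(\k_1,\ldots,\k_\ell)$ directly, and density (with continuity of $d$) yields $\diam(M)\leq t_c(\k_1,\ldots,\k_\ell)$ without ever touching abnormal minimizers or limit arguments. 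To make your proof complete you would need to invoke this (or an equivalent) result; as written, Step 2 does not go through.
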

\begin{proof}
First, we show that $\diam(M):= \sup\{d(x,y)|\,x,y \in M\} \leq t_c(\k_1,\ldots,\k_\ell)$. Let $x_0 \in M$, and let $\Sigma_{x_0} \subseteq M$ be the set of points $x$ such that there exists a unique minimizing geodesic connecting $x_0$ with $x$, strictly normal and with no conjugate points. We have the following fundamental result (see \cite{agrachevsmooth} or also \cite[Thm. 5.8]{curvature}).
\begin{theorem}
Let $x_0 \in M$. The set $\Sigma_{x_0}$ is open, dense and the sub-Riemannian squared distance $x \mapsto d^2(x_0,x)$ is smooth on $\Sigma_{x_0}$.
\end{theorem}
Indeed, the sub-Riemannian exponential map $\EXP_{x_0} : T_{x_0}^*M \to M$ is a smooth diffeomorphism between $\overline{\Sigma}_{x_0}:= \EXP_{x_0}^{-1}(\Sigma_{x_0}) \subseteq T_{x_0}^*M$ and $\Sigma_{x_0}$. Now consider all the normal geodesics connecting $x_0$ with points in $\Sigma_{x_0}$, associated with initial covectors in $\overline{\Sigma}_{x_0}$. The generic normal geodesic, with covector in $A_{x_0} \subseteq T_{x_0}^*M$ is ample and equiregular, with the same growth vector, and thus the same Young diagram $\y_{x_0} = \y$. Thus, for an open dense set $\Sigma_{x_0}' := \EXP_{x_0}(A_{x_0}) \cap \Sigma_{x_0} \subseteq M$, there exists a unique geodesic connecting $x_0$ with $x \in \Sigma_{x_0}'$, and it has Young diagram $\y$.

Now we apply Theorem~\ref{t:comparisonaverage} to all the geodesics connecting $x_0$ with points $x \in \Sigma_{x_0}'$, and we obtain that the first conjugate time $t_c$ along these geodesics satisfies $t_c \leq t_c(\k_1,\ldots,\k_\ell)$. These geodesics lose optimality after the first conjugate point and, since the geodesics are parametrised by length, we have that, for any $x_0 \in M$, $\sup\{d(x_0,x)|\,x \in \Sigma_{x_0}'\} \leq t_c(\k_1,\ldots,\k_\ell)$. By density of $\Sigma_{x_0}'$ in $M$, we obtain that $\diam(M) \leq t_c(\k_1,\ldots,\k_\ell)$. The condition on the roots of $P_{\k_1,\ldots,\k_\ell}$ implies that $t_c(\k_1,\ldots,\k_\ell) < +\infty$, by Theorem~\ref{t:ARS}.

By completeness of $M$, closed sub-Riemannian balls are compact, hence $M$ is compact. For the result about the fundamental group, the argument is the classical one. First, we consider the universal cover $\wt{M}$ of $M$. We define a sub-Riemannian structure on $\wt{M}$ uniquely by lifting the sub-Riemannian metric on the evenly covered neighbourhoods. All the local assumptions of our theorem remain true also on $\wt{M}$. The completeness remains true as well. The bounds on Ricci curvature still holds since it is only a local concept and the covering map is a local isometry. Hence we apply Theorem~\ref{t:comparisonaverage} to the covering, and also $\wt{M}$ is compact. Then any point $q \in M$ has a finite number of preimages in $\wt{M}$, and so $\pi_1(M)$ is finite since $\pi_1(\wt{M})$ is trivial.
\end{proof}
\begin{remark}
In the Riemannian case, $P_{\k_1}(x) = x^2 + \k_1$. Then we recover the classical Bonnet-Myers theorem since, by Example~\ref{ex:1}, $t_c(\k_1) = \pi/\sqrt{\k_1}$.
\end{remark}
%
%
\section{Applications to left-invariant structures on 3D unimodular Lie groups}\label{s:3d}

Consider a contact left-invariant sub-Riemannian structure on a 3D manifold. Any non-trivial geodesic is ample, equiregular and has the same Young diagram, with two boxes on the first row, and one in the second row (see Example~\ref{ex:contact}). The subspace associated with the box in the second row corresponds to the direction of the motion, i.e., the tangent vector to the geodesic. Since the curvature always vanishes in this direction (see Remark~\ref{r:takingout}), we can restrict to a single-level Young diagram $\lev$ of length $\ell=2$ and size $r=1$. We denote by $\lev_{1},\lev_{2}$ the two boxes of this level. The comparison LQ model is the one discussed in Example~\ref{ex:2}. Then, Theorem~\ref{t:comparisonaverage} rewrites as follows.
\begin{theorem}\label{t:bonnetmyers2}
Let $\gamma(t)$ be a length parametrized geodesic of a contact left-invariant sub-Rie\-mannian structure on a 3D manifold. Assume that
\begin{equation}
\Ric_{\g(t)}^{\lev_{i}} \geq \k_{i}, \qquad i=1,2, \qquad \forall t \geq 0,
\end{equation}
for some $\k_1$, $\k_2$ such that
\begin{equation}\label{eq:3dcond}
\begin{cases}
\k_{1} > 0, \\ 4\k_{2} > -\k_{1}^2,
\end{cases}
\qquad \text{or} \qquad
\begin{cases}
\k_{1} \leq 0, \\ \k_{2} >0.
\end{cases}
\end{equation}
Then $t_{c}(\g)\leq t_{c}(\k_{1},\k_{2})<+\infty$. If the hypotheses are satisfied for every length parametrised geodesic, then
the manifold is compact, with diameter not greater than $t_c(\k_1,\k_2)$. Moreover, its fundamental group is finite.
\end{theorem}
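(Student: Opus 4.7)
The plan is to obtain Theorem~\ref{t:bonnetmyers2} as a direct specialization of Theorem~\ref{t:comparisonaverage} and Theorem~\ref{t:bonnetmyers} to the 3D contact setting, where essentially all the heavy lifting has already been done.

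First I would handle the conjugate time bound along a single geodesic. As recalled in Example~\ref{ex:contact}, every non-trivial geodesic of a contact 3D sub-Riemannian structure is ample and equiregular, with a Young diagram consisting of a two-box first row and a one-box second row. The box in the second row corresponds to the direction of motion, along which the directional curvature vanishes identically (Remark~\ref{r:takingout}), so it contributes nothing. What remains is a single level of length $\ell = 2$ and size $r = 1$, with superboxes $\alpha_1, \alpha_2$. Applying Theorem~\ref{t:comparisonaverage} to this level yields at once $t_c(\gamma) \leq t_c(\kappa_1,\kappa_2)$, and Example~\ref{ex:2} identifies the finiteness range $t_c(\kappa_1,\kappa_2) < +\infty$ as being precisely Eq.~\eqref{eq:3dcond}.

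Next I would address the global statements (compactness, diameter bound, and finiteness of $\pi_1(M)$) by invoking Theorem~\ref{t:bonnetmyers}. Two background assumptions must be verified: completeness and $(\star)$. Completeness is a standard property of left-invariant sub-Riemannian structures on Lie groups. Assumption $(\star)$ follows immediately from left-invariance: the Hamiltonian flow, and hence every Lie-derivative based invariant entering the growth vector, is equivariant under left translations, so the Young diagram of the generic geodesic is the same constant diagram $\y$ at every point. Theorem~\ref{t:bonnetmyers} then delivers all three conclusions, provided the polynomial $P_{\kappa_1,\kappa_2}$ has a simple purely imaginary root.

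The only remaining check is the equivalence between that polynomial criterion and Eq.~\eqref{eq:3dcond}. A direct computation from the definition gives
\begin{equation}
P_{\kappa_1,\kappa_2}(x) = x^4 + \kappa_1 x^2 - \kappa_2,
\end{equation}
and setting $y = x^2$ reduces the question to the existence of a simple \emph{negative} real root of $y^2 + \kappa_1 y - \kappa_2 = 0$, whose roots are $y_\pm = \tfrac{1}{2}(-\kappa_1 \pm \sqrt{\kappa_1^2 + 4\kappa_2})$. A short case analysis on the sign of $\kappa_1$ (using Vieta's formulas: $y_+ y_- = -\kappa_2$, $y_+ + y_- = -\kappa_1$) reproduces exactly the two alternatives of Eq.~\eqref{eq:3dcond}. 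There is no substantive obstacle: the statement is essentially a translation of the abstract polynomial criterion from Theorem~\ref{t:bonnetmyers} into the explicit, already known, finiteness criterion for $\ell = 2$ established in Example~\ref{ex:2} via Theorem~\ref{t:ARS}.
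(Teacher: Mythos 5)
Your proposal is correct and follows essentially the same route as the paper: Theorem~\ref{t:bonnetmyers2} is obtained by specializing Theorem~\ref{t:comparisonaverage} (and, for the global statements, Theorem~\ref{t:bonnetmyers}) to the single level of length $\ell=2$ and size $r=1$ of the contact Young diagram, with the finiteness condition \eqref{eq:3dcond} identified with the criterion of Example~\ref{ex:2} via Theorem~\ref{t:ARS}. Your explicit verification that $P_{\k_1,\k_2}(x)=x^4+\k_1x^2-\k_2$ has a simple purely imaginary root exactly when \eqref{eq:3dcond} holds, together with the remarks on completeness and assumption $(\star)$ for left-invariant structures, matches what the paper leaves implicit.
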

In the statement of Theorem~\ref{t:bonnetmyers2} we allow also for negative Ricci curvatures. Indeed, in general, $\Ric_{\g(t)}^{\lev_2}$ is not sign-definite along the geodesic.

\subsection{Invariants of a 3D contact structure}

For \emph{invariant} of a sub-Riemannian structure we mean any scalar function that is preserved by isometries. In this section we introduce the invariants $\chi$, $\kappa$ of 3D contact sub-Riemannian structures, not necessarily left-invariant. For left-invariant structures, $\chi$ and $\kappa$ are constant and we write the expression for $\Ric_{\g(t)}^{\lev_1}$ and $\Ric_{\g(t)}^{\lev_2}$ in terms of these quantities. The presentation follows closely the one contained in \cite{miosr3d}, where the interested reader can find more details.

Recall that a three-dimensional sub-Riemannian structure is contact if $\distr= \ker \omega$, where $d\omega |_{\distr_{x}}$ is non degenerate, for every $x\in M$. In what follows we normalize the contact structure by requiring that $d\omega |_{\distr_{x}}$ agrees with the volume induced by the inner product on $\distr$.  The \emph{Reeb vector field} associated with the contact structure is the unique vector field $X_{0}$ such that $\omega(X_{0})=1$ and $d\omega(X_{0},\cdot)=0$. Notice that $X_{0}$ depends only on the sub-Riemannian structure. For every orthonormal frame $X_{1},X_{2}$ on the distribution, we have
\begin{equation}\label{eq:algebracampi0}
\begin{split} 
[X_{1},X_{0}]&=c_{01}^1 X_{1}+c_{01}^2 X_{2},\\
[X_{2},X_{0}]&=c_{02}^1 X_{1}+c_{02}^2 X_{2}, \\
[X_{2},X_{1}]&=c_{12}^1 X_{1}+c_{12}^2 X_{2}+X_{0},
\end{split}
\end{equation}
where $c_{ij}^{k}\in C^{\infty}(M)$. The sub-Riemannian Hamiltonian is
\begin{equation} \label{eq:ham3d}
H=\frac{1}{2}(h_1^2+h_2^2),
\end{equation}
where $h_i(\lam)=\langle \lam,X_{i}(q) \rangle$ are the linear-on-fibers functions on $T^{*}M$
associated with the vector fields $X_{i}$, for $i=0,1,2$. Length parametrized geodesics are projections of solutions of the Hamiltonian system associated with $H$ on $T^{*}M$ that are contained in the level set $H=1/2$.

The Poisson bracket $\{H,h_0\}$ is an invariant of the sub-Riemannian structure and, by definition, it vanishes everywhere if and only if the flow of the Reeb vector field $e^{tX_{0}}$ is a one-parameter family of sub-Riemannian isometries. A standard computation gives 
\begin{equation} \label{eq:parentesihh0}
\{H,h_0\}=c_{01}^1h_1^2+(c_{01}^2+c_{02}^1)h_1h_2+c_{02}^2h_2^2.
\end{equation}
For every $x\in M$, the restriction of $\{H,h_0\}$ to $T^{*}_{x}M$, that we denote by $\{H,h_0\}_{x}$, is a quadratic form on the dual of the distribution $\distr^{*}_{x}$, hence it can be interpreted as a symmetric operator on the distribution $\distr_{x}$ itself. In particular its determinant and its trace are well defined. Moreover one can show that $\trace\{H,h_0\}_x=c_{01}^1+c_{02}^2=0$, for every $x\in M$.
 The first invariant $\chi$ is defined as the positive eigenvalue of this operator, namely
\begin{equation} \label{eq:defchi}
 \chi(x):=\sqrt{-\det\{H,h_0\}_x}\geq0.
\end{equation}
The second invariant $\kappa$
can be defined  via the structure constants \eqref{eq:algebracampi0} as follows:
\begin{equation} \label{eq:defkappa}
\kappa(x):=X_{2}(c_{12}^1)-X_{1}(c_{12}^2)-(c_{12}^1)^2-(c_{12}^2)^2+
\frac{c_{01}^2-c_{02}^1}{2}.
\end{equation}  
One can prove that the expression \eqref{eq:defkappa} is invariant by rotation of the orthonormal frame.
\begin{remark}
The quantities $\chi$, $\kappa$ were first introduced in~\cite{agrexp} as differential invariants appearing in the asymptotic expansion of the cut and conjugate locus of the sub-Riemannian exponential map near to the base point.
\end{remark}

\subsection{Left-invariant structures}
For left-invariant structures, the functions $c_{ij}^{k}$ and the invariants $\chi$, $\kappa$, are constant on $M$ can be used to classify the left-invariant structures on three-dimensional Lie groups. In particular, when $\chi=0$, the unique left-invariant structures (up to local isometries) are the Heisenberg group $\mathbb{H}$ and the Lie groups $\mathrm{SU}(2)$ and $\mathrm{SL}(2)$, with metric given by the Killing form, corresponding to the choice of $\kappa=0,1,-1,$ respectively. When $\chi>0$, for each choice of $(\chi,\kappa)$ there exists exactly one unimodular Lie group with these values (see \cite[Thm. 1]{miosr3d}).

\subsubsection{Case \texorpdfstring{$\chi=0$}{X=0}}\label{s:chi0}
When $\chi=0$, the flow of the Reeb vector field is a one-parameter family of sub-Riemannian isometries. In particular from the computations contained in \cite[Thm. 6.2]{AAPL} one gets that the directional curvature $\mathfrak{R}_{\gamma(t)}$ is diagonal with entries  $\Ric_{\g(t)}^{\lev_{1}}$ and $\Ric_{\g(t)}^{\lev_{2}}$, where
\begin{equation}
\Ric_{\g(t)}^{\lev_{1}}=h_{0}^{2}(t)+\kappa(h_{1}^{2}(t)+h_{2}^{2}(t)),\qquad \Ric_{\g(t)}^{\lev_{2}}=0.
\end{equation}
We stress that  $\lam(t)=(h_{0}(t),h_{1}(t),h_{2}(t))$ is the solution of the Hamiltonian system associated with $H$ and $\lam=(h_{1}(0),h_{2}(0),h_{0}(0))$ is the initial covector associated with the geodesic $\g(t)$.

For any length-parametrized geodesic $H(\lam(t))=1/2$, namely $h_{1}^{2}(t)+h_{2}^{2}(t)=1$. Moreover $h_{0}(t)= h_0$ is a constant of the motion. Thus
\begin{equation}
\Ric_{\g(t)}^{\lev_{1}}=h_{0}^{2}+\kappa,\qquad \Ric_{\g(t)}^{\lev_{2}}=0.
\end{equation}
Notice that $\Ric_{\g(t)}^{\lev_{1}}$ and $\Ric_{\g(t)}^{\lev_{2}}$ are constant in $t$ and $\mathfrak{R}_{\gamma(t)}$ is diagonal, so for all these cases we can apply  Theorem \ref{t:comparison1}, computing the exact value of the first conjugate time. In particular, this recovers the following well known results obtained in \cite{gersh,boscainrossi}.

\begin{itemize}
\item $\mathbb{H}$. In this case $\kappa=0$. If $h_{0}=0$ we have $\Ric_{\g(t)}^{\lev_{1}}=\Ric_{\g(t)}^{\lev_{2}}=0$ and the geodesic has no conjugate point. If $h_{0}\neq 0$ then $t_{c}=2\pi/|h_{0}|$.

\item $\mathrm{SU}(2)$. In this case $\kappa=1$. We have $\Ric_{\g(t)}^{\lev_{1}}=h_{0}^{2}+1$, $\Ric_{\g(t)}^{\lev_{2}}=0$ and every geodesic has  conjugate time $t_{c}=2\pi/\sqrt{h_{0}^{2}+1}$.

\item $\mathrm{SL}(2)$. In this case $\kappa=-1$. We have $\Ric_{\g(t)}^{\lev_{1}}=h_{0}^{2}-1$, $\Ric_{\g(t)}^{\lev_{2}}=0$ and we have two cases. If $h_{0}\leq 1$ then $t_{c}=+\infty$. If $h_{0}>1$ every geodesic has  conjugate time $t_{c}=2\pi/\sqrt{h_{0}^{2}-1}$.
\end{itemize}
Let us mention that, for $\mathrm{SU}(2)$, the first condition of~\eqref{eq:3dcond} holds for any geodesic. Hence, thanks to Theorem \ref{t:bonnetmyers2}, we recover its compactness and the exact estimate on its diameter, equal to $2\pi$. 
\subsubsection{Case \texorpdfstring{$\chi>0$}{X > 0}}
In this section we prove our result on 3D unimodular Lie groups with $\chi>0$.
Let us recall that under these assumptions, there exists a special orthonormal frame for the 
sub-Riemannian structure. In terms of the latter we provide the explicit expression of a constant of the motion.

\begin{proposition} \label{p:framenon0} 
Let $M$ be a 3D unimodular Lie group, endowed with a contact left-invariant structure, with $\chi >0$. Then there exists a left-invariant orthonormal frame $X_{1},X_{2}$ on the distribution such that 
\begin{equation} \label{eq:parconchi} \{H,h_0\}=2\chi h_1h_2. \end{equation} 
Moreover the Lie algebra defined by the frame
$X_{0},X_{1},X_{2}$ satisfies 
\begin{equation}\label{eq:algchinon00}
\begin{split}
    [X_{1},X_{0}]&=  (\chi+\kappa)X_{2} , \\
    [X_{2},X_{0}]&=(\chi-\kappa)X_{1}, \\
    [X_{2},X_{1}]&=X_{0}.
\end{split}
\end{equation}
The function $E:T^{*}M\to \R$, defined by
\begin{equation}
E=\frac{h_{0}^{2}}{2\chi}+h_{2}^{2},
\end{equation}
is a constant of the motion, i.e., $\{H,E\}=0$.  Finally the curvatures $\Ric_{\g}^{\lev_{1}}$ and $\Ric_{\g}^{\lev_{2}}$ satisfy:
\begin{gather}
\Ric_{\g}^{\lev_{1}}=h_{0}^{2}+3 \chi(h_{1}^{2}-h_{2}^{2})+\kappa(h_{1}^{2}+h_{2}^{2}),\label{eq:R11}\\
\Ric_{\g}^{\lev_{2}}=6\chi (h_{1}^{2}-h_{2}^{2})h_{0}^{2}-2\chi(\chi+\kappa)h_{1}^{4}-12\chi^{2}h_{1}^{2}h_{2}^{2}-2\chi(\chi-\kappa)h_{2}^{4}. \label{eq:R22}
\end{gather}
\end{proposition}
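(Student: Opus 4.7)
The plan is to establish the four parts of the proposition in sequence, each building on the previous, with the Ricci computations being by far the most technical.

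First, I would construct the special orthonormal frame by diagonalizing $\{H,h_{0}\}_{x}$. Since $\trace\{H,h_{0}\}_{x}=0$ and $\det\{H,h_{0}\}_{x}=-\chi^{2}<0$, the symmetric operator on $\distr_{x}$ induced by this quadratic form has eigenvalues $\pm\chi$. A rotation of any initial orthonormal frame brings the form to the off-diagonal normal form $2\chi h_{1}h_{2}$; left-invariance ensures that the resulting frame can be chosen left-invariant on the whole group. Comparing with \eqref{eq:parentesihh0} forces $c^{1}_{01}=c^{2}_{02}=0$ and $c^{2}_{01}+c^{1}_{02}=2\chi$, giving \eqref{eq:parconchi}.

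Second, I would obtain the Lie algebra structure \eqref{eq:algchinon00} by invoking unimodularity. Computing $\trace(\mathrm{ad}_{X_{i}})$ in the basis $\{X_{0},X_{1},X_{2}\}$ and setting each to zero yields $c^{1}_{01}+c^{2}_{02}=0$, $c^{1}_{12}=c^{1}_{01}$ and $c^{2}_{12}=-c^{2}_{02}$, so combined with the frame choice we obtain $c^{1}_{12}=c^{2}_{12}=0$. Substituting into \eqref{eq:defkappa} gives $\kappa=(c^{2}_{01}-c^{1}_{02})/2$, which together with $c^{2}_{01}+c^{1}_{02}=2\chi$ produces exactly $c^{2}_{01}=\chi+\kappa$ and $c^{1}_{02}=\chi-\kappa$. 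For the conservation of $E$, I would compute $\{H,E\}$ directly from the convention $\{h_{X},h_{Y}\}=h_{[X,Y]}$: the structure equations \eqref{eq:algchinon00} give $\{h_{1},h_{2}\}=-h_{0}$, hence $\{H,h_{2}\}=h_{1}\{h_{1},h_{2}\}=-h_{0}h_{1}$, and
\begin{equation}
\{H,E\}=\frac{h_{0}}{\chi}\{H,h_{0}\}+2h_{2}\{H,h_{2}\}=\frac{h_{0}}{\chi}(2\chi h_{1}h_{2})-2h_{0}h_{1}h_{2}=0.
\end{equation}

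The main obstacle is the derivation of \eqref{eq:R11} and \eqref{eq:R22}. Here I would apply the canonical frame construction of Theorem~\ref{p:can} to the extremal $\lambda(t)$. The 3D contact Young diagram has two rows, of length $2$ and $1$, so one must produce a Darboux frame $\{E_{ai},F_{ai}\}$ with $a=1$ of length $2$ (the level $\alpha$ with superboxes $\alpha_{1},\alpha_{2}$) and $a=2$ of length $1$ (the direction of motion, which carries no curvature). The strategy is to start from the vertical fields $\partial_{h_{0}},\partial_{h_{1}},\partial_{h_{2}}$ at $\lambda(t)$, iteratively take Lie derivatives along $\vec{H}$, and use the normalisation conditions together with the Darboux property to pin down the frame uniquely up to orthogonal transformations preserving the superboxes. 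Because the structure is left-invariant, all Poisson brackets between the $h_{i}$ and their higher derivatives are computable purely in terms of $\chi,\kappa$ and the $h_{i}(t)$ via \eqref{eq:algchinon00}. The curvature matrix entries $R_{\alpha_{i}\alpha_{j}}(t)$ then arise as the coefficients of $E_{bj}$ in the expansion of $\dot F_{ai}$ in the structural equations \eqref{zelframe}. Reading off the diagonal entries on the superboxes gives $\Ric_{\gamma}^{\alpha_{i}}$.

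I expect this last step to be a lengthy bookkeeping exercise that is best delegated to a separate computational lemma or appendix. A shortcut would be to appeal to the explicit curvature formulas already derived for the 3D contact setting in \cite{AAPL} and \cite{agrexp}, specializing them to the special frame of part (1) and substituting $c^{k}_{ij}$ from \eqref{eq:algchinon00}; after simplification using $h_{1}^{2}+h_{2}^{2}=1$ (in the length-parametrized case, when needed) one recovers precisely \eqref{eq:R11}--\eqref{eq:R22}. The non-trivial cancellations in \eqref{eq:R22} — in particular the appearance of the quartic combinations $-2\chi(\chi\pm\kappa)h_{i}^{4}$ — are the sharpest sanity check that the computation has been carried out correctly.
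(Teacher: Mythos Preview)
Your proposal is correct and follows essentially the same route as the paper: the paper cites \cite[Prop.~13]{miosr3d} for the existence of the canonical frame (which is precisely your diagonalization-of-$\{H,h_0\}_x$ argument), invokes unimodularity to kill $c_{12}^1,c_{12}^2$, computes $\{H,E\}=0$ exactly as you do, and for the curvatures takes your ``shortcut'' option of specializing the formulas from \cite[Thm.~6.2]{AAPL} rather than redoing the canonical-frame computation. One minor slip: your trace-of-$\mathrm{ad}$ identities should read $\trace(\mathrm{ad}_{X_1})=-c_{12}^2$ and $\trace(\mathrm{ad}_{X_2})=c_{12}^1$ directly, not $c_{12}^1=c_{01}^1$ etc., though the conclusion $c_{12}^1=c_{12}^2=0$ is unaffected.
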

In Eqs. \eqref{eq:R11} and \eqref{eq:R22} we suppressed the explicit dependence on $t$. 
\begin{proof} From \cite[Prop.\ 13]{miosr3d} it follows that
there exists a unique (up to a sign) canonical frame
$X_{0},X_{1},X_{2}$ such that 
\begin{equation} \label{eq:algebracampi1}
\begin{split}
    [X_{1},X_{0}]&=  c_{01}^2X_{2} , \\
    [X_{2},X_{0}]&=c_{02}^1X_{1}, \\
[X_{2},X_{1}]&=c_{12}^1 X_{1}+c_{12}^2 X_{2}+X_{0}.
\end{split}
\end{equation}
In particular, if the Lie group is unimodular, then the left and the right Haar measures coincide. This implies $c_{12}^{1}=c_{12}^{2}=0$ (cf. proof of \cite[Thm. 1]{miosr3d}).
Then, from \eqref{eq:defchi} and \eqref{eq:defkappa}, it  follows that $\chi=(c_{01}^2+c_{02}^1)/2$, and  $\kappa=(c_{01}^2-c_{02}^1)/2$, which imply  \eqref{eq:algchinon00}.

Let us show that, if \eqref{eq:algchinon00} holds, then $\{H,E\}=0$. Using that $\{H,h_0\}=2\chi h_1h_2$ and $\{H,h_{2}\}=\{h_{1},h_{2}\}h_{1}=-h_{0}h_{1}$ one gets
\begin{equation}
\{H,E\}=\frac{1}{\chi}\{H,h_{0}\}h_{0}+2\{H,h_{2}\}h_{2} =2 h_1h_2h_{0} -2h_{1}h_{2}h_{0}=0.
\end{equation}
Finally, Eqs.~\eqref{eq:R11} and~\eqref{eq:R22} are simply formulae form \cite[Thm. 6.2]{AAPL} specified for left-invariant structures and rewritten in terms of $\chi,\kappa$ in the frame  introduced above (notice that the constants $c_{ij}^{k}$ appearing here are the opposite of those  used in \cite{AAPL}).
\end{proof}

Since $E$ is a constant of the motion, for any length parametrized geodesic $\g(t)$ we denote by $E(\g)$ the (constant) value of $E(\lam(t))$, where $\lam(t)$ is the solution of the Hamiltonian system associated with $H$ such that $\g(t)=\pi(\lam(t))$.

\begin{theorem} \label{t:Egrande}
Let $M$ be a 3D unimodular Lie group, endowed with a contact left-invariant structure, with $\chi >0$ and $\kappa\in\R$. Then there exists $\barE=\barE(\chi,\kappa)$ such that every length parametrized geodesic $\gamma$ with $E(\g)\geq \barE$ has a finite conjugate time.  
\end{theorem}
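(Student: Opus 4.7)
My plan is to apply Theorem~\ref{t:bonnetmyers2} with uniform bounds $\k_1(E), \k_2(E)$ on the two Ricci curvatures that depend only on the conserved value of $E(\g)$, and then to verify that condition~\eqref{eq:3dcond} holds for $E$ large enough.

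First, I would reduce the Ricci curvatures to polynomial expressions in $s(t) := h_2^2(t)$ along the geodesic. Length parametrization gives $H(\lam(t))=1/2$, hence $h_1^2(t)+h_2^2(t)=1$. Conservation of $E$ from Proposition~\ref{p:framenon0} gives $h_0^2(t)=2\chi(E-s(t))$. Substituting both identities into~\eqref{eq:R11}--\eqref{eq:R22} yields
\begin{equation}
\Ric_{\g(t)}^{\lev_1} = 2\chi E + 3\chi + \kappa - 8\chi\, s(t),
\end{equation}
\begin{equation}
\Ric_{\g(t)}^{\lev_2} = 32\chi^2 s(t)^2 - (24\chi^2 E + 20\chi^2 - 4\chi\kappa)\, s(t) + 12\chi^2 E - 2\chi^2 - 2\chi\kappa,
\end{equation}
so both Ricci curvatures depend only on $E, \chi, \kappa$ and on $s(t) \in [0,1]$.

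Next, I would minimize these polynomials over $s \in [0,1]$ to obtain lower bounds valid for all $t \geq 0$. The first expression is affine with slope $-8\chi < 0$, so its minimum is at $s=1$, giving $\k_1(E) := 2\chi E - 5\chi + \kappa$. The second is a convex quadratic in $s$ whose vertex $s^{*} = (6\chi E + 5\chi - \kappa)/(16\chi)$ satisfies $s^{*} > 1$ once $E$ exceeds an explicit threshold $E_0(\chi,\kappa)$; beyond that threshold the minimum on $[0,1]$ is also at $s=1$, giving $\k_2(E) := -12\chi^2 E + 10\chi^2 + 2\chi\kappa$.

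Finally, I would verify condition~\eqref{eq:3dcond} asymptotically in $E$. Clearly $\k_1(E)>0$ for $E > (5\chi - \kappa)/(2\chi)$, and a direct expansion gives
\begin{equation}
\k_1(E)^2 + 4\k_2(E) = 4\chi^2 E^2 + (4\chi\kappa - 68\chi^2)\, E + O(1),
\end{equation}
which tends to $+\infty$ as $E\to+\infty$. Taking $\bar E = \bar E(\chi,\kappa)$ larger than $E_0$, than $(5\chi-\kappa)/(2\chi)$, and than the largest real root of $\k_1^2+4\k_2$, one has $\k_1(E) > 0$ and $4\k_2(E) > -\k_1(E)^2$ simultaneously for all $E \geq \bar E$, so Theorem~\ref{t:bonnetmyers2} yields $t_c(\g) \leq t_c(\k_1(E),\k_2(E)) < +\infty$. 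The only real obstacle is the polynomial bookkeeping; the underlying mechanism is that large $E$ forces $h_0$ to be of order $\sqrt{E}$, so the $h_0^2$ contribution to $\Ric^{\lev_1}$ grows linearly in $E$ and the competing terms in both Ricci curvatures grow at most linearly as well, ensuring that $\k_1^2 \sim 4\chi^2 E^2$ eventually dominates $|4\k_2| \sim 48\chi^2 E$.
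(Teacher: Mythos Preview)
Your proposal is correct and follows essentially the same route as the paper: both reduce $\Ric^{\lev_1}$ and $\Ric^{\lev_2}$ to functions of a single remaining variable along the level set $H=1/2$, $E=\mathrm{const}$, minimize to obtain uniform lower bounds $\k_1,\k_2$ depending only on $E$, and then check condition~\eqref{eq:3dcond} for large $E$. The only substantive difference is bookkeeping: the paper parametrizes by $h_0^2$ and bounds the quartic and quadratic terms in~\eqref{eq:R220} separately (using $h_0^4 \geq [2\chi(E-1)]^2$ and $-h_0^2 \geq -2\chi E$), arriving at $\k_2 = 2\chi^2(15-26E)-2\chi\kappa$, whereas you parametrize by $s=h_2^2$ and take the exact minimum of the resulting convex quadratic on $[0,1]$, obtaining the sharper $\k_2 = -12\chi^2 E + 10\chi^2 + 2\chi\kappa$. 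Both give the same $\k_1$ and the same leading term $4\chi^2 E^2$ in $\k_1^2+4\k_2$, so the existence of $\barE$ follows identically; your optimization simply yields a smaller (better) value of $\barE(\chi,\kappa)$.
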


\begin{proof}
We prove that the assumptions of Theorem \ref{t:bonnetmyers2} are satisfied for every geodesic when $E$ is large enough.
Since $E$ is a constant of  the motion and $H=1/2$ we have
\begin{equation}\label{eq:HE}
h_{2}^{2}=E- \frac{h_{0}^{2}}{2\chi},\qquad h_{1}^{2}=1- E+ \frac{h_{0}^{2}}{2\chi}.
\end{equation}
Plugging Eq.~\eqref{eq:HE} into Eqs.~\eqref{eq:R11} and \eqref{eq:R22},  $\Ric_{\g(t)}^{\lev_{1}}$ and $\Ric_{\g(t)}^{\lev_{2}}$  are rewritten as follows
\begin{gather}
\Ric_{\g(t)}^{\lev_{1}}=4 h_{0}^{2}-3 \chi(2E-1)+\kappa, \label{eq:R11s}\\
\Ric_{\g(t)}^{\lev_{2}}=8 h_{0}^{4}-[2\kappa+10\chi(2E-1)]h_{0}^{2} +[2\chi\kappa(2E-1)+\chi^{2}(8E^{2}-8E-2)].\label{eq:R220}
\end{gather}
Since $h_{1}^{2}+h_{2}^{2}=1$ one has $|h_{2}|\leq 1$, from \eqref{eq:HE} one has the following bound for $h_{0}$ along the curve 
\begin{equation}\label{eq:boundh0}
2\chi (E-1)\leq h_{0}^{2}(t)\leq 2\chi E.
\end{equation}
Then we have easily a lower bound for $\Ric_{\g(t)}^{\lev_{1}}$
\begin{equation}
\begin{split}
\Ric_{\g(t)}^{\lev_{1}}&\geq 8\chi (E-1)-3 \chi(2E-1)+\kappa\\
&\geq 2\chi E-5\chi +\kappa=:\k_{1}
\end{split}
\end{equation}
Since we want to prove the result for $E$ large enough, we assume that
\begin{equation}
E\geq \frac{1}{2}\left(5-\frac{\kappa}{\chi}\right),
\end{equation}
so that $\k_{1}>0$, and the coefficient of $h_{0}^{2}$ in \eqref{eq:R220} is negative. Then using \eqref{eq:boundh0}  one estimates 
\begin{align}
\Ric_{\g(t)}^{\lev_{2}}
&\geq 2\chi^{2}(15-26E)-2\chi\kappa=:\k_{2}
\end{align}
In order to show that the first condition of Eq.~\eqref{eq:3dcond} of Theorem~\ref{t:bonnetmyers2} is satisfied we also compute
\begin{equation} \label{eq:c1c2}
4\k_{2}+\k_{1}^{2}=4\chi^{2}E^{2}+a(\chi,\kappa)E+b(\chi,\kappa),
\end{equation}
where $a$ and $b$ are the following quadratic functions
\begin{equation}
a(\chi,\kappa)=4\chi \kappa -228 \chi^{2},\qquad 
b(\chi,\kappa)=145\chi^{2}-18\chi \kappa +\kappa^{2}.
\end{equation}
Since the coefficient of $E^{2}$ in Eq.~\eqref{eq:c1c2} is positive, there exists $\barE=\barE(\chi,\kappa)$, the largest positive root of Eq.~\eqref{eq:c1c2}, such that $4\k_{2}+\k_{1}^{2}>0$  for all $E>\barE$, which ends the proof.
\end{proof}
\begin{remark}
The roots of Eq.~\eqref{eq:c1c2}, and in particular $\barE(\chi,\kappa)$, depend only on the ratio $\kappa/\chi$. This means that this number is invariant by rescaling of the sub-Riemannian structure. This could seem strange at a first glance but is a consequence of the fact that we consider only length parametrized geodesics. We also stress that, in general, the value $\barE(\chi,\kappa)$ given by this computation is not sharp.
\end{remark}

\appendix
\section{Comparison theorems for the matrix Riccati equation} \label{a:ricccati}

The general, non-autonomous, symmetric matrix Riccati equation can be written as follows:
\begin{equation}
\dot{X} = \mathrm{R}(X;t):=M(t)_{11} +  X M(t)_{12} + M(t)_{12}^* X + X M(t)_{22} X = \begin{pmatrix}
\mathbb{I} & X
\end{pmatrix}
M(t)\begin{pmatrix}
\mathbb{I} \\ X 
\end{pmatrix},
\end{equation}
where $M(t)$ is a smooth family of $2n\times 2n$ symmetric matrices. We always assume a symmetric initial datum, then the solution must be symmetric as well on the maximal interval of definition. All the comparison results are based upon the following theorems.
\begin{theorem}[Riccati comparison theorem 1]\label{t:riccati}
Let $M_1(t)$, $M_2(t)$ be two smooth families of $2n\times 2n$ symmetric matrices. Let $X_i(t)$ be smooth solution of the Riccati equation
\begin{equation}
\dot{X}_i = \mathrm{R}_i(X_i;t), \qquad i =1,2,
\end{equation}
on a common interval $I \subseteq \R$. Let $t_0 \in I$ and \emph{(i)} $M_1(t) \geq M_2(t)$ for all $t \in I$, \emph{(ii)} $X_{1}(t_0) \geq X_{2}(t_0)$. Then for any $t \in [t_0,+\infty) \cap I$, we have $X_1(t) \geq X_2(t)$.
\end{theorem}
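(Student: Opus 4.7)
My plan is to introduce the difference $Y(t) := X_1(t) - X_2(t)$ and show it stays positive semidefinite for $t \geq t_0$. Subtracting the two Riccati equations, decomposing as $\mathrm{R}_1(X_1;t) - \mathrm{R}_2(X_2;t) = [\mathrm{R}_1(X_1;t) - \mathrm{R}_1(X_2;t)] + [\mathrm{R}_1(X_2;t) - \mathrm{R}_2(X_2;t)]$, and using $X_1 = X_2 + Y$ to expand the quadratic piece symmetrically as $X_2 M_1(t)_{22} Y + Y M_1(t)_{22} X_2 + Y M_1(t)_{22} Y$, one obtains
\begin{equation}
\dot Y = C(t)^* Y + Y\, C(t) + Y M_1(t)_{22} Y + \Delta(t),
\end{equation}
with drift matrix $C(t) := M_1(t)_{12} + M_1(t)_{22} X_2(t)$ and source
\begin{equation}
\Delta(t) := \mathrm{R}_1(X_2(t);t) - \mathrm{R}_2(X_2(t);t) = \begin{pmatrix} \mathbb{I} & X_2(t) \end{pmatrix}\bigl(M_1(t) - M_2(t)\bigr)\begin{pmatrix} \mathbb{I} \\ X_2(t) \end{pmatrix} \geq 0
\end{equation}
by hypothesis (i). To kill the linear (drift) terms I would then introduce the fundamental matrix $T(t)$ solving $\dot T = -C(t) T$ with $T(t_0) = \mathbb{I}$, which is invertible throughout $I$, and set $\tilde Y(t) := T(t)^* Y(t) T(t)$. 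A short calculation shows that the cross terms cancel and
\begin{equation}
\dot{\tilde Y} = \tilde Y\, \tilde M(t)\, \tilde Y + \tilde \Delta(t),
\end{equation}
where $\tilde M(t) := T(t)^{-1} M_1(t)_{22} (T(t)^*)^{-1}$ is symmetric and $\tilde \Delta(t) := T(t)^* \Delta(t) T(t) \geq 0$. Since the congruence $Y \mapsto T^* Y T$ preserves signatures, the theorem reduces to showing $\tilde Y(t) \geq 0$ for $t \in [t_0,+\infty) \cap I$, starting from $\tilde Y(t_0) = Y(t_0) \geq 0$.

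The reduced statement is the general fact that any solution of a symmetric Riccati ODE of the form $\dot Z = Z N(t) Z + S(t)$ with $S(t) \geq 0$ and $Z(t_0) \geq 0$ stays positive semidefinite in forward time. I would prove it by a standard regularization argument. For $\epsilon > 0$, consider the perturbed Cauchy problem $\dot{\tilde Y}_\epsilon = \tilde Y_\epsilon \tilde M \tilde Y_\epsilon + \tilde \Delta + \epsilon \mathbb{I}$ with $\tilde Y_\epsilon(t_0) = \tilde Y(t_0) + \epsilon \mathbb{I} > 0$. Suppose $\tilde Y_\epsilon$ failed to be positive definite at some first time $t_* > t_0$. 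Then $\tilde Y_\epsilon(t_*) \geq 0$ has a nontrivial kernel vector $v$, and $f(t) := \langle \tilde Y_\epsilon(t) v, v \rangle$ is non-negative on $[t_0,t_*]$ with $f(t_*) = 0$, giving $\dot f(t_*) \leq 0$. On the other hand, since $\tilde Y_\epsilon(t_*) v = 0$ annihilates the quadratic term,
\begin{equation}
\dot f(t_*) = \langle \tilde \Delta(t_*) v, v\rangle + \epsilon \|v\|^2 \geq \epsilon \|v\|^2 > 0,
\end{equation}
a contradiction. Hence $\tilde Y_\epsilon > 0$ throughout its interval of existence, and letting $\epsilon \to 0^+$ yields $\tilde Y(t) \geq 0$, whence $Y(t) \geq 0$ and finally $X_1(t) \geq X_2(t)$ as desired.

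The main technical subtlety is not the cone-invariance argument itself but ensuring that for $\epsilon$ small the perturbed flow $\tilde Y_\epsilon$ is defined on a sufficiently long interval. Since the Riccati equation is nonlinear, one must rule out a finite-time blow-up of $\tilde Y_\epsilon$ preceding that of $\tilde Y$. This is handled by continuous dependence of ODE solutions on initial conditions and parameters: as $\tilde Y$ itself exists on all of $I$, for any compact subinterval $J \subseteq [t_0,+\infty) \cap I$ the perturbed solution $\tilde Y_\epsilon$ also exists on $J$ for all sufficiently small $\epsilon > 0$, which is enough to pass to the limit.
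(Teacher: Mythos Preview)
Your proof is correct and essentially self-contained, but it takes a different route from the paper's. Both start by writing an ODE for the difference $U=X_1-X_2$, but the algebraic decomposition differs. You split $\mathrm{R}_1(X_1)-\mathrm{R}_2(X_2)$ by inserting $\mathrm{R}_1(X_2)$, obtaining a genuine Riccati equation for $U$ with drift $C=M_1(t)_{12}+M_1(t)_{22}X_2$, a quadratic term $U M_1(t)_{22} U$, and nonnegative source $\Delta$. You then gauge away the drift and run a regularization/first-time-of-failure argument to show positivity, carefully noting the continuous-dependence issue for the perturbed solution. The paper instead uses the symmetric drift $\theta=M_2(t)_{12}^*+\tfrac12(X_1+X_2)M_2(t)_{22}$, which makes the quadratic term disappear entirely: one gets the exact identity $\dot U=\theta U+U\theta^*+\bigl(\begin{smallmatrix}\mathbb{I}&X_1\end{smallmatrix}\bigr)(M_1-M_2)\bigl(\begin{smallmatrix}\mathbb{I}\\X_1\end{smallmatrix}\bigr)$, hence a \emph{linear} Lyapunov differential inequality $\dot U\geq\theta U+U\theta^*$, and the conclusion follows from a cited lemma (equivalently, conjugation by the fundamental solution of $\dot T=\theta T$). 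The paper's trick is algebraically slicker and avoids both the quadratic term and the $\epsilon$-perturbation; your approach trades that for a direct, fully written-out positivity argument that does not defer to an external reference.
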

\begin{proof}
The proof is a simplified version of \cite[Thm. 4.1.4]{abou2003matrix}. Let $U:= X_1 - X_2$. Notice that $U$ is symmetric on the interval $I$ where both solutions are defined. A computation shows that
\begin{equation}
\dot{U} = \theta(t) U + U \theta(t)^* + \begin{pmatrix}
\mathbb{I}& X_1
\end{pmatrix} (M_1 - M_2)\begin{pmatrix}
\mathbb{I} \\ X_1
\end{pmatrix},
\end{equation}
where
\begin{equation}
\theta(t) = M_2(t)_{12}^* + \frac{1}{2}X_1(t) M_2(t)_{22} + \frac{1}{2}X_2(t) M_2(t)_{22}.
\end{equation}
Taking in account that $M_1(t) - M_2(t) \geq 0$, the matrix $U$ satisfies
\begin{equation}
\dot{U} \geq \theta(t)U + U\theta(t)^*.
\end{equation}
Indeed $U(t_0) \geq 0$. Then, the statement follows from the next lemma (see \cite[Thm. 4.1.2]{abou2003matrix}).
\begin{lemma}
Let $U$ be a symmetric solution of the Lyapunov differential inequality 
\begin{equation}
\dot{U} \geq \theta(t) U + U \theta(t)^*, \qquad t \in I\subseteq \R,
\end{equation}
where $\theta(t)$ is smooth. Then $U(t_0) \geq 0$ implies $U(t) \geq 0$ for all $t \in I \cap [t_0,+\infty)$.\qedhere
\end{lemma}
\end{proof}

The assumptions of Theorem~\ref{t:riccati}  involve comparison on coefficients of Riccati equations and on initial data. It can be generalised also for limit initial data as follows.
\begin{theorem}[Riccati comparison theorem 2]\label{t:riccatilim}
Let $M_1(t)$, $M_2(t)$ be two smooth families of $2n\times 2n$ symmetric matrices. Let $X_i(t)$ be smooth solutions of the Riccati equation
\begin{equation}
\dot{X}_i = \mathrm{R}_i(X_i;t), \qquad i =1,2,
\end{equation}
on a common interval $I \subseteq \R$. Let $t_0 \in \bar{I}$. Assume that \emph{(i)} $M_1(t) \geq M_2(t)$ for all $t \in \bar{I}$, \emph{(ii)} $X_i(t) >0$ for $t>t_0$ sufficiently small, \emph{(iii)} there exist $Y_i(t_0):=\lim_{t\to t_0+} X_i^{-1}(t)$ and \emph{(iv)} $Y_1(t_0) \leq Y_2(t_0)$. Then, for any $t \in (t_0,+\infty) \cap I$, we have $X_1(t) \geq X_2(t)$.
\end{theorem}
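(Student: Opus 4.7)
The plan is to reduce to Theorem~\ref{t:riccati} via the substitution $Y_i := X_i^{-1}$, which transforms the limit initial condition at $t_0$ into an ordinary continuous one, at the cost of reversing the sign of the coefficient inequality.

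A direct computation using $\dot{Y} = -Y\dot{X}Y$ shows that if $X$ is an invertible solution of the Riccati equation with coefficient $M(t)$, then $Y := X^{-1}$ satisfies the Riccati equation with coefficient $N(t) := -J M(t) J$, where $J := \bigl(\begin{smallmatrix} 0 & \mathbb{I} \\ \mathbb{I} & 0 \end{smallmatrix}\bigr)$. Since $J$ is orthogonal, conjugation by $J$ preserves the Loewner order, and hypothesis (i) $M_1 \geq M_2$ translates into $N_1 \leq N_2$. By (ii) each $Y_i$ is smooth on some subinterval $(t_0, t_0+\tau) \subseteq I$; by (iii) it extends continuously to $t_0$; and (iv) reads $Y_1(t_0) \leq Y_2(t_0)$.

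Mimicking the derivation in the proof of Theorem~\ref{t:riccati}, setting $U := Y_2 - Y_1$ yields on $(t_0, t_0+\tau)$
\[
\dot{U} = \theta\, U + U \theta^{*} + \begin{pmatrix} \mathbb{I} & Y_2 \end{pmatrix}(N_2 - N_1)\begin{pmatrix} \mathbb{I} \\ Y_2 \end{pmatrix},
\]
where $\theta(t) := (N_1)_{12}^{*}(t) + \tfrac{1}{2}\bigl(Y_1(t) + Y_2(t)\bigr)(N_1)_{22}(t)$ is continuous on $[t_0, t_0+\tau]$ and the last term is positive semidefinite. Let $\Phi$ be the fundamental solution of $\dot{\Phi} = \theta \Phi$, $\Phi(t_0) = \mathbb{I}$, defined on $[t_0, t_0+\tau]$. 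Then $\tfrac{d}{dt}\bigl(\Phi^{-1} U (\Phi^{-1})^{*}\bigr) \geq 0$ on $(t_0, t_0+\tau)$, so $\Phi^{-1} U (\Phi^{-1})^{*}$ is Loewner non-decreasing there, and its right-hand limit at $t_0$ equals $U(t_0) \geq 0$. Hence $U(t) \geq 0$, i.e.\ $Y_1(t) \leq Y_2(t)$, and by operator antitonicity of inversion on the positive cone, $X_1(t) \geq X_2(t)$ on $(t_0, t_0+\tau)$. To propagate to the whole range, fix any $s \in (t_0, t_0+\tau)$: since $X_1(s) \geq X_2(s)$ and $M_1 \geq M_2$, Theorem~\ref{t:riccati} applied at the interior initial time $s$ gives $X_1(t) \geq X_2(t)$ on $[s, +\infty) \cap I$, and letting $s \to t_0^{+}$ concludes.

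The main obstacle is this passage to the limit at $t_0$: the Lyapunov lemma used in the proof of Theorem~\ref{t:riccati} requires a smooth initial datum, which is unavailable at a boundary point where the solutions are only continuous. Conjugating by $\Phi^{-1}$ converts the differential inequality into a plain monotonicity statement for $\Phi^{-1} U (\Phi^{-1})^{*}$ on the open interval, which is stable under the continuous limit at $t_0$ and thus bypasses the regularity issue.
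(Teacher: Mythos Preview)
Your proof is correct and follows the same route as the paper: pass to $Y_i=X_i^{-1}$, observe that the $Y_i$ solve the Riccati equation with coefficient $N_i=-JM_iJ$ so that the ordering reverses, compare the $Y_i$ near $t_0$, invert back to get $X_1(\varepsilon)\geq X_2(\varepsilon)$ at an interior time, and then invoke Theorem~\ref{t:riccati} from $\varepsilon$ onward.

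The only difference is that you single out the regularity at $t_0$ as the ``main obstacle'' and bypass it with an explicit fundamental-solution argument for the Lyapunov inequality. The paper simply applies Theorem~\ref{t:riccati} to the $Y_i$ on $[t_0,\varepsilon]$ without comment. In fact no obstacle is present: once $Y_i$ extends continuously to $t_0$, the right-hand side $\mathrm{R}^{N_i}(Y_i;t)$ is polynomial in $Y_i$ with smooth coefficients, so $\dot Y_i$ also extends continuously, and bootstrapping makes $Y_i$ smooth on $[t_0,\varepsilon]$. Thus Theorem~\ref{t:riccati} applies directly, and your $\Phi$-conjugation, while perfectly valid, is not needed.
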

\begin{proof}
Let $Y_i(t):= X_i(t)^{-1}$, defined on some interval $(t_0,\eps) \subseteq I$. They satisfy
\begin{equation}
\dot{Y}_i = \begin{pmatrix} \mathbb{I} & Y_i
\end{pmatrix}
N_i(t)\begin{pmatrix}
\mathbb{I} \\ Y_i
\end{pmatrix}, \qquad N_i(t):= - \begin{pmatrix}
0 & \mathbb{I} \\ \mathbb{I} & 0
\end{pmatrix} M_i(t) \begin{pmatrix}
0 & \mathbb{I} \\ \mathbb{I} & 0
\end{pmatrix}, \qquad i=1,2.
\end{equation}
Indeed $Y_i(t)$ can be prolonged on $[t_0,\eps]$ for $\eps$ sufficiently small by (iii). Moreover $N_2(t) \geq N_1(t)$ by (i) and $Y_2(t_0) \geq Y_1(t_0)$ by (iv). The point $t_0$ belongs to the interval of definition of $Y_i$ then, by Theorem~\ref{t:riccati}, $Y_2(\eps) \geq Y_1(\eps)$. By (ii), this implies that $X_1(\eps) \geq X_1(\eps)$. Then we can apply again Theorem~\ref{t:riccati} to $X_1$ and $X_2$, with $t_0 = \eps$, and we obtain that $X_1(t) \geq X_2(t)$ for all $t \in [\eps,+\infty) \cap I$. Since $\eps$ can be chosen arbitrarily close to $t_0$, we obtain the statement.
\end{proof}

\subsection{Well posedness of limit Cauchy problem}\label{a:well}
The following lemma justifies the savage use of the Cauchy problem with limit initial condition. Let $A,B$, be $n\times n$ and $n\times k$ matrices, respectively, satisfying the controllability condition
\begin{equation}
\spn \{ B,AB,\ldots,A^m B\} = \R^n,
\end{equation}
for some $m \geq 0$. Thus, since the column space of $B$ is equal to the column space of $BB^*$, we have that, if we put $\Gamma_1:=A^*$ and $\Gamma_2:= BB^* \geq 0$,
\begin{equation}
\spn \{ \Gamma_2,\Gamma_1\Gamma_2,\ldots,\Gamma_1^m\Gamma_2\} = \R^n.
\end{equation}
This condition is indeed satisfied for the matrices $\Gamma_1,\Gamma_2$ introduced in Sec.~\ref{s:Jac}.

\begin{lemma}\label{l:limit}
For any smooth {\blu and symmetric} $R(t)$, the Cauchy problem with limit initial condition
\begin{equation}\label{eq:riccatiappendix}
\dot{V} = -\Gamma_1 V - V\Gamma_1^* - R(t) - V\Gamma_2 V, \qquad \displaystyle \lim_{t\to 0^+} V^{-1} = 0,
\end{equation}
is well posed, in the sense that there exists a solution of the Riccati equation, invertible for small $t>0$ such that $ \lim_{t\to 0^+} V^{-1} = 0$. The solution is {\blu symmetric and} unique on some maximal interval of definition $I \subseteq (0,+\infty)$.
In addition, $V(t) >0$ for small $t>0$.
\end{lemma}
\begin{proof}
We first prove uniqueness. If two solutions $V_1$, $V_2$ exist, their inverses $W_1$ and $W_2$ (defined for $t>0$ sufficiently small) can be extended to smooth matrices on $[0,\eps)$, by setting $W_1(0) = W_2(0) = 0$. Moreover, they both satisfy the following Cauchy problem:
\begin{equation}
\dot{W} = \Gamma_1^* W + W \Gamma_1 + \Gamma_2 + W R(t) W, \qquad W(0) = 0.
\end{equation}
By uniqueness of the standard Cauchy problem, $W_1(\eps) = W_2(\eps)$. Therefore also $V_1^{-1}(\eps) = V_{2}^{-1}(\eps)$, and uniqueness follows. The choice of $\eps >0$ for setting the Cauchy datum is irrelevant, since different choices bring to the same solution. Finally, any solution can be extended uniquely to a maximal solution, defined on some interval $I \subseteq (0,+\infty)$.  {\blu Notice $V(t)$ and $V(t)^*$ are both solution of \eqref{eq:riccatiappendix}, in particular $V(t) = V(t)^*$.}

Now, we prove the existence. Consider the Cauchy problem
\begin{equation}
\dot{W} = \Gamma_1^* W + W \Gamma_1 + \Gamma_2 + W R(t) W, \qquad W(0) = 0.
\end{equation}
Its solution is well defined for $t \in [0,\eps)$. We will soon prove that, for $t \in (0,\eps)$, such a solution is positive. Thus $V(t):=W(t)^{-1}$, defined for $t \in (0,\eps)$, is a solution of the original Cauchy problem with limit initial datum, by construction.

We are left to prove that, for $t >0$ small enough, $W(t)>0$. Since $R(t)$ is smooth, for $t \in [0,\eps)$ we can find $\k$ such that $R(t) \geq \k \mathbb{I}$. By comparison Theorem~\ref{t:comparison1}, we have that our solution is bounded below by the solution with $R(t) = \k \mathbb{I}$. We write $W(t) \geq W_\k(t) \geq 0$ for $t \in [0,\eps)$ (the last inequality follows again from Theorem~\ref{t:comparison1}, by considering the trivial solution of the Cauchy problem obtained by setting $\Gamma_2 = 0$).

Assume that, for some small $t>0$ and $x\neq 0$, we have $W(t)x=0$. This implies $W_\k(t) x = 0$. Being a solution of a Riccati equation with constant coefficients, $W_\k(t)$ is monotone non-decreasing (indeed $\dot{W}(0) = \Gamma_2 \geq 0$, and the same holds true for $t \in [0,\eps)$ by Lemma~\ref{l:monotone}). Therefore $W_\k(t)x = 0$ identically. Therefore all the derivatives, computed at $t=0$, vanish identically. This imples, after careful examination of the higher derivatives, that
\begin{equation}
\Gamma_2 x = \Gamma_2 \Gamma_1 x = \ldots \Gamma_2\Gamma_1^m x = \ldots = 0,
\end{equation}
that leads to $x=0$. This contradicts the assumption, hence $W(t) >0$ for $t$ sufficiently small.
\end{proof}

\section{Proof of Lemma~\ref{l:cs}} \label{a:cs}
\begin{lemma*}
Let $\{X_a\}_{a=1}^r$, $\{Y_a\}_{a=1}^r$ be sets of $\ell \times \ell$ matrices. Then
\begin{equation}
\left(\sum_{a=1}^r X_a^* Y_a\right)\left(\sum_{b=1}^r X_b^* Y_b\right)^* \leq \left\lVert \sum_{a=1}^r Y_a^* Y_a \right\rVert \sum_{b=1}^r X_b^* X_b.
\end{equation}
\end{lemma*}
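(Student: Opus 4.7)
The proof I would write exploits a block-matrix packaging trick that reduces the claim to a one-line matrix inequality.

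The plan is to organize the data into tall block matrices
\begin{equation}
\mathbf{X} := \begin{pmatrix} X_1 \\ \vdots \\ X_r \end{pmatrix}, \qquad \mathbf{Y} := \begin{pmatrix} Y_1 \\ \vdots \\ Y_r \end{pmatrix},
\end{equation}
both of size $r\ell \times \ell$. A direct block computation then gives the three identifications
\begin{equation}
\mathbf{X}^*\mathbf{X} = \sum_{a=1}^r X_a^* X_a, \qquad \mathbf{Y}^*\mathbf{Y} = \sum_{a=1}^r Y_a^* Y_a, \qquad \mathbf{X}^*\mathbf{Y} = \sum_{a=1}^r X_a^* Y_a,
\end{equation}
so the desired inequality is equivalent to
\begin{equation}
(\mathbf{X}^*\mathbf{Y})(\mathbf{X}^*\mathbf{Y})^* \le \lVert \mathbf{Y}^*\mathbf{Y}\rVert\, \mathbf{X}^*\mathbf{X}.
\end{equation}

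The key step is then the standard observation that $\mathbf{Y}\mathbf{Y}^*$ and $\mathbf{Y}^*\mathbf{Y}$ share the same nonzero spectrum, so $\lVert \mathbf{Y}\mathbf{Y}^*\rVert = \lVert \mathbf{Y}^*\mathbf{Y}\rVert$, and consequently, as positive semidefinite operators on $\mathbb{R}^{r\ell}$,
\begin{equation}
\mathbf{Y}\mathbf{Y}^* \le \lVert \mathbf{Y}^*\mathbf{Y}\rVert\, \mathbb{I}_{r\ell}.
\end{equation}
Compressing this inequality by $\mathbf{X}$, i.e. multiplying on the left by $\mathbf{X}^*$ and on the right by $\mathbf{X}$ (which preserves the order of positive semidefinite matrices since it is a congruence by the same matrix), gives precisely
\begin{equation}
\mathbf{X}^*\mathbf{Y}\mathbf{Y}^*\mathbf{X} \le \lVert \mathbf{Y}^*\mathbf{Y}\rVert\, \mathbf{X}^*\mathbf{X},
\end{equation}
which is the inequality we wanted.

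There is essentially no obstacle here; the only subtle point is recognising that the matrix-valued ``Cauchy--Schwarz'' statement becomes transparent once written in terms of $\mathbf{X}$ and $\mathbf{Y}$, reducing it to the trivial fact $AA^* \le \lVert A^*A\rVert \, \mathbb{I}$ composed with a congruence. No spectral theorem on $M = \sum_a Y_a^* Y_a$, no diagonalisation, and no appeal to the scalar Cauchy--Schwarz inequality are needed.
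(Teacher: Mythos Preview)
Your proof is correct, and it takes a genuinely different route from the paper's argument. The paper proceeds pointwise: it fixes a vector $v\in\R^\ell$, writes the left-hand side as $\lVert \sum_a Y_a^* X_a v\rVert^2$, then chooses an auxiliary unit vector $u$ realizing this norm and applies the \emph{scalar} Cauchy--Schwarz inequality to the sequences $(Y_a u)_a$ and $(X_a v)_a$ in $\R^{\ell r}$, finally bounding $\sum_a \lVert Y_a u\rVert^2$ by $\lVert \sum_a Y_a^* Y_a\rVert$. Your approach packages everything into the $r\ell\times\ell$ blocks $\mathbf{X},\mathbf{Y}$ and reduces the statement to the single operator inequality $\mathbf{Y}\mathbf{Y}^* \le \lVert \mathbf{Y}^*\mathbf{Y}\rVert\,\mathbb{I}$ compressed by $\mathbf{X}$. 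This is shorter and conceptually cleaner: no auxiliary vectors, no scalar Cauchy--Schwarz, and the only nontrivial fact used is that $\mathbf{Y}\mathbf{Y}^*$ and $\mathbf{Y}^*\mathbf{Y}$ share operator norm. The paper's version, on the other hand, makes the analogy with classical Cauchy--Schwarz explicit (indeed it reduces to it when $\ell=1$), which ties in with their remark about the noncommutative product $\odot$.
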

\begin{proof}
Let $v \in \R^\ell$. Then
\begin{equation}
v^*\left(\sum_{a=1}^r X_a^* Y_a\right)\left(\sum_{b=1}^r X_b^* Y_b\right)^* v = \left\lVert \sum_{a=1}^r Y_a^* X_a v \right\rVert^2.
\end{equation}
Notice the change in position of the transpose. Now, let $u \in \R^\ell$, such that $\lVert u \rVert = 1$, and
\begin{equation}
u^* \left(\sum_{a=1}^r Y_a^* X_a v\right) = \left\lVert \sum_{a=1}^r Y_a^* X_a v \right\rVert.
\end{equation}
Then, by the Cauchy-Schwarz inequality, we obtain
\begin{equation}\label{eq:dis1}
\left\lVert \sum_{a=1}^r Y_a^* X_a v \right\rVert^2 =  \left\lvert\sum_{a=1}^r (Y_a u)^* (X_a v) \right\rvert^2 \leq  \sum_{a=1}^r \lVert Y_a u \rVert^2 \sum_{a=1}^r \lVert X_a v \rVert^2.
\end{equation}
Now, observe that 
\begin{equation}\label{eq:dis2}
\sum_{a=1}^r\lVert Y_a u\rVert^2 = u^* \sum_{a=1}^r Y_a^* Y_au \leq \left\lVert \sum_{a=1}^r Y^*_a Y_a  \right\rVert.
\end{equation}
Then, plugging Eq.~\eqref{eq:dis2} in Eq.~\eqref{eq:dis1}, we obtain
\begin{equation}
\left\lVert \sum_{a=1}^r Y_a^* X_a v \right\rVert^2  \leq \left\lVert \sum_{a=1}^r Y^*_a Y_a  \right\rVert \sum_{b=1}^r \lVert X_b v \rVert^2,
\end{equation}
which implies the statement.
\end{proof}

\section{Proof of Proposition~\ref{p:riemcan} and Lemma~\ref{l:Riemanncurv}}\label{a:riemcan}

In order to prove Proposition~\ref{p:riemcan} and Lemma~\ref{l:Riemanncurv} we define a local frame on $T^*M$, associated with the choice of a local frame $X_1,\dots,X_n$ on $M$. For $i = 1,\dots,n$ let $h_i :T^*M \to \mathbb{R}$ be the linear-on-fibres function defined by $ h_i(\lambda) :=  \langle \lambda, X_i\rangle$. The action of derivations on $T^*M$ is completely determined by the action on affine functions, namely functions $a \in C^\infty(T^*M)$ such that $a(\lambda) = \langle \lambda, Y \rangle + \pi^* g$ for some $Y\in \VecM$, $g\in C^\infty(M)$. Then, we define the \emph{coordinate lift of a field} $X \in \VecM$ as the field $\wt{X} \in  \VecTM$ such that $\wt{X}(h_i) = 0$ for $i=1,\dots,n$ and $\wt{X}(\pi^*g) = X(g)$. This, together with Leibniz rule, characterize the action of $\wt{X}$ on affine functions, and then completely define $\wt{X}$. Indeed, by definition, $\pi_* \wt{X} = X$. On the other hand, we define the (vertical) fields $\partial_{h_i}$ such that $\partial_{h_i} (\pi^* g) = 0$, and $\partial_{h_i} (h_j) = \delta_{ij}$. 
It is easy to check that $\{ \partial_{h_i}, \wt{X}_i\}_{i=0}^{n}$ is a local frame on $T^*M$. We call such a frame the \emph{coordinate lifted frame}.

\begin{proof}[Proof of Proposition~\ref{p:riemcan} and Lemma~\ref{l:Riemanncurv}]
Point (i) is trivial and follows from the definition of the coordinate lifted frame $\partial_{h_i}$. In order to prove point (ii), we compute explicitly
\begin{equation}
\vec{H} = \sum_{i=1}^n \left(h_i \wt{X}_i + \sum_{j,k=1}^n h_i c_{ij}^k h_k \partial_{h_j}\right) = \sum_{i=1}^n\left( h_i \wt{X}_i + \sum_{j,k=1}^n h_i \Gamma_{ij}^k h_k \partial_{h_j}\right),
\end{equation}
where we used the identities $c_{ij}^k = \Gamma_{ij}^k - \Gamma_{ji}^k$ and $\Gamma_{ij}^k = -\Gamma_{ik}^j$. Then, a direct computation gives
\begin{equation}\label{eq:FiRiem}
F_i = - [\vec{H},\partial_{h_i}] = \wt{X}_i + \sum_{j,k=1}^n h_k \left(\Gamma_{ij}^k +\Gamma_{kj}^i \right) \partial_{h_j} = \wt{X}_i + \sum_{j,k=1}^n h_k \Gamma_{ij}^k \partial_{h_j},
\end{equation}
where we used the fact that, for a parallely transported frame, $\sum_{k=1}^n h_k \Gamma_{kj}^i = 0$ and we suppressed the explicit evaluation at $\lambda(t)$. Now we are ready to prove point (ii). Indeed $\sigma_{\lambda(t)}(\partial_{h_i},\partial_{h_j}) = 0$, since $\ve_\lambda$ is Lagrangian for all $\lambda$. Then
\begin{equation}
\sigma_{\lambda(t)}(\partial_{h_i},-[\vec{H},\partial_{h_j}]) = -\langle X_i|\pi_*[\vec{H},\partial_{h_j}]\rangle = \delta_{ij},
\end{equation}
where we used that $\pi_*[H,\partial_{h_j}] = - X_j$, and that for any vertical vector $\xi \in \ve_\lambda$ and $\eta \in T_{\lambda}(T^*M)$, $\sigma(\xi,\eta) = \langle \xi|\pi_*\eta\rangle$, where we identified $\xi$ with an element of $T_{\pi(\lambda)} M$ through the scalar product. Finally, by using the r.h.s. of Eq.~\eqref{eq:FiRiem}, we obtain
\begin{equation}
\sigma(F_i,F_j) = \sum_{k=1}^n \left(\Gamma_{ij}^k h_k - \Gamma_{ji}^k h_k - h_k c_{ij}^k \right)= \sum_{k=1}^n \langle h_k X_k|\nabla_{X_i} X_j - \nabla_{X_j} X_i - [X_i,X_j]\rangle =0,
\end{equation}
where we suppressed the explicit dependence on $t$ and the last equality is implied by the vanishing of the torsion of Levi-Civita connection. For what concerns point (iii), the first structural equation is the definition of $F_i$. By taking the derivative of $F_i$, we obtain
\begin{equation}
\dot{F}_i = [\vec{H}, F_i] = \sum_{\ell,k,j=1}^n h_\ell h_k \langle\nabla_{X_i} \nabla_{X_\ell} X_k-\nabla_{X_\ell} \nabla_{X_i} X_k - \nabla_{[X_i,X_\ell]} X_k|X_j\rangle E_j.
\end{equation}
In particular, this implies Lemma~\ref{l:Riemanncurv}, since
\begin{equation}
R_{ij}(t) = \sum_{\ell,j=1}^n h_\ell h_k \langle \nabla_{X_i}\nabla_{X_\ell} X_k - \nabla_{X_\ell} \nabla_{X_i} X_k - \nabla_{[X_i,X_\ell]}X_k|X_j\rangle = \langle R^\nabla(X_i,\dot{\gamma})\dot{\gamma}|X_j\rangle,
\end{equation}
by definition of Riemann tensor, and the fact that $\dot{\gamma}(t) = \sum_{i=1}^n h_i(\lambda(t)) X_i|_{\gamma(t)}$. Finally, let $\wt{E}_i,\wt{F}_j$ be any smooth moving frame along $\lambda(t)$ satisfying (i)-(iii). We can write, in full generality
\begin{equation}
\wt{E}_i = \sum_{j=1}^n A_{ij}(t)E_j + B_{ij}(t) F_j, \qquad \wt{F}_i = \sum_{j=1}^n C_{ij}(t)E_j + D_{ij}(t) F_j,
\end{equation}
for some smooth families of $n\times n$ matrices $A(t),B(t),C(t),D(t)$, where the frame is understood to be evaluated at $\lambda(t)$. By imposing conditions (i)-(iii), we obtain that the latter are actually constant, orthogonal matrices, and $B=C=0$, thus proving the uniqueness property.
\end{proof}

\paragraph*{Acknowledgements.}
The authors are grateful to Andrei Agrachev for fruitful discussions. The authors have been supported by Institut Henri Poincar\'e, Paris (RIP program), where most of this research has been carried out. The authors have been supported by the European Research Council, ERC StG 2009 ``GeCoMethods'', contract number 239748. The second author has been supported by INdAM (GDRE CONEDP). 

\bibliographystyle{abbrv}
\bibliography{biblio-comparison}

\end{document}